\DeclareMathOperator{\divg}{div}
\DeclareMathOperator{\im}{Im}
\DeclareMathOperator{\tr}{tr}
\DeclareMathOperator{\Aut}{Aut}
\DeclareMathOperator{\End}{End}
\DeclareMathOperator{\Diff}{Diff}
\DeclareMathOperator{\GDiff}{GDiff}
\DeclareMathOperator{\Isom}{Isom}
\DeclareMathOperator{\Rc}{Rc}
\begin{document}

\theoremstyle{definition}
\newtheorem{claim}{Claim}
\theoremstyle{plain}
\newtheorem{proposition}{Proposition}[section]
\newtheorem{theorem}[proposition]{Theorem}
\newtheorem{lemma}[proposition]{Lemma}
\newtheorem{corollary}[proposition]{Corollary}
\theoremstyle{definition}
\newtheorem{defn}[proposition]{Definition}
\theoremstyle{remark}
\newtheorem{remark}[proposition]{Remark}
\theoremstyle{definition}
\newtheorem{example}[proposition]{Example}
\theoremstyle{definition}
\newtheorem*{Motivation}{Motivation}

\newcommand{\Addresses}{{
  \footnotesize

  \begin{normalsize}{\textbf{Kuan-Hui Lee}}
  \end{normalsize}
  \par\nopagebreak
  \textsc{Department of Mathematics, University of California-Irvine, CA, U.S.A }\par\nopagebreak
  \textit{E-mail address}: \texttt{kuanhuil@uci.edu}

}}

\title{\textbf{Stability and moduli space of generalized Ricci solitons}}
\author{KUAN-HUI LEE}
\date{}
\maketitle
\Addresses
\begin{abstract}
 The generalized Einstein-Hilbert action is an extension of the classic scalar curvature energy and Perelman’s $\mathcal F$-functional which incorporates a closed three-form.  The critical points are known as generalized Ricci solitons, which arise naturally in mathematical physics, complex geometry, and generalized geometry.  Through a delicate analysis of the group of generalized gauge transformations, and implementing a novel connection, we give a simple formula for the second variation of this energy which generalizes the Lichnerowicz operator in the Einstein case.  As an application we show that all Bismut-flat manifolds are linearly stable critical points, and admit nontrivial deformations arising from Lie theory. Furthermore, this leads to extensions of classic results of Koiso \cite{Ko4,Ko1,Ko2,Ko5} and Podesta, Spiro, Kröncke \cite{K1,K3,K4,K2} to the moduli space of generalized Ricci solitons.  To finish we classify deformations of the Bismut-flat structure on $S^3$ and show that some are integrable while others are not.
\end{abstract}

\section{Introduction}

Let $M$ be a smooth manifold and fix a closed 3-form $H_0$. Given a smooth family of Riemannian metrics and 2-forms  $(g_t,b_t)$, we say that $(g_t,b_t)$ is a \emph{generalized Ricci flow} (we will abbreviate it as GRF later on) if 
\begin{align}
    \frac{\partial}{\partial t}g=-2\Rc+\frac{1}{2}H^2, \quad
   \frac{\partial}{\partial t}b=-d^*H \quad \text{ where $H=H_0+db$}. \label{1}
\end{align}
This parabolic flow was written in \cite{P}, \cite{J} and it can be viewed as the Ricci flow using the Bismut connections $\nabla^{\pm}=\nabla\pm\frac{1}{2}g^{-1}H$. The generalized Ricci flow arises naturally in complex geometry \cite{Streets2020, 8189878}, mathematical physics \cite{polchinski_1998}  and generalized geometry \cite{MG},\cite{STREETS2017506}. We define the generalized Einstein--Hilbert functional
\begin{align}
    \mathcal{F}\colon\nonumber&\quad \Gamma(S^2M)\times \Omega^2\times C^\infty (M)\to \mathbb{R}
    \\& \quad (g,b,f)\longmapsto \int_M (R-\frac{1}{12}|H_0+db|^2+|\nabla f|^2)e^{-f}dV_g \label{2} 
\end{align}
and  
\begin{align}
    \lambda(g,b)\coloneqq\inf\Big\{\mathcal{F}(g,b,f)\big|\kern0.2em f\in C^\infty(M),\,\int_M e^{-f}dV_g=1\Big\}. \label{3}
\end{align}
One can see that $\lambda(g,b)$ can be achieved by some $f$ uniquely, i.e., $\lambda(g,b)=\mathcal{F}(g,b,f)$ and $\lambda$ is the first eigenvalue of the Schrödinger operator $-4\triangle+R-\frac{1}{12}|H_0+db|^2$. In \cite{P}, it was shown that $\lambda$ is monotone increasing under the generalized Ricci flow and critical points of $\lambda$ are steady gradient generalized Ricci solitons.

We say that a pair $\mathcal{G}(g,b)$ is a \emph{steady generalized Ricci soliton} if there exists a smooth vector field $X$ such that
\begin{align}
    0=\Rc_g-\frac{1}{4}H^2+\frac{1}{2}L_Xg, \quad 0=d^*_gH+i_XH. \label{4}
\end{align}
where $H=H_0+db$. In this work, we only focus on the case when $X=\nabla f$ for some smooth function $f$ and we say that $\mathcal{G}$ is a steady gradient generalized Ricci soliton. In fact, the first variation of $\lambda$ implies that if $\mathcal{G}(g,b)$ is a compact steady gradient generalized Ricci soliton, then $(g,b)$ satisfies (\ref{4}) with $X=\nabla f$ and $f$ is the minimizer of $\lambda(g,b)$.

The first goal of this work is to understand the variational structure of $\lambda$.  In \cite{K,P} second variation formulas were derived which employ the Levi-Civita connection and are difficult to understand geometrically due to the presence of many torsion terms.  Here we provide a conceptually distinct formulation which is the foundation of the results to follow.  The first point is to address the invariance of $\lambda$ under the group of generalized gauge transformations, which is the semidirect product of the group of diffeomorphisms with the space of $B$-field transformations.  To address this we employ a slice theorem shown in \cite{Rubio_2019,K} to reduce to certain nontrivial deformations.  On this restricted space of deformations we were able to discern a subtle structure in the second variation which leads to many applications.  The key point is the introduction of a modified connection $\overline{\nabla}$ on the variational space $T^*M \otimes T^*M$ which employs both Bismut connections $\nabla^{\pm}$. In particular, $\overline{\nabla}$ is defined by
\begin{align*}
    (\overline{\nabla}_X\gamma)(Y,Z)=\nabla_X\Big(\gamma(Y,Z)\Big)-\gamma(\nabla^-_XY,Z)-\gamma(Y,\nabla^+_XZ), \quad \text{ $\gamma\in\otimes^2T^*M$ and $X,Y,Z\in TM$.}
\end{align*}
Using the connection $\overline{\nabla}$ we are lead to the following conceptually clear formulation of the second variation of $\lambda$, which forms the foundation of the results to follow.

\begin{theorem}\label{T1}
Given a compact steady gradient generalized Ricci soliton $\mathcal{G}(g,b)$ on a smooth manifold $M$. Suppose $\mathcal{G}(g_t,b_t)$ is a one-parameter family of generalized metrics such that 
\begin{align*}
    &\frac{\partial}{\partial t}\Big|_{t=0}(g_t-b_t) =\gamma=h-K, \quad  (g_0,b_0)=(g,b).
\end{align*}
Let $u$ be the unique solution of 
\begin{align*}
    \triangle_f u=\overline{\divg}_f\overline{\divg}_f\gamma,\quad \int_M ue^{-f}dV_g=0.
\end{align*}
where the definition of $\overline{\divg}_f$ is given in (\ref{div}) and (\ref{div0}). The second variation of $\lambda$ on  $\mathcal{G}(g,b)$ is given by 
\begin{align*}
     \nonumber\frac{d^2}{dt^2}\Big|_{t=0}\lambda=\int_M \Big\langle \gamma, \frac{1}{2}\overline{\triangle}_f\gamma+\mathring{R}^+(\gamma)+\frac{1}{2}\overline{\divg}_f^*\overline{\divg}_f\gamma+\frac{1}{2}(\nabla^+)^2u \Big\rangle e^{-f}dV_g,
\end{align*} 
where $\overline{\divg}_f^*$ is the formal adjoint of $\overline{\divg}_f$ with respect to (\ref{6}), $\overline{\triangle}_f$ 
 is defined in (\ref{triangle}), $\langle \mathring{R}^+(\gamma),\gamma \rangle=R^+_{iklj}\gamma_{ij}\gamma_{kl}$ and $R^+$ is the Bismut curvature given in \Cref{P3}.

\end{theorem}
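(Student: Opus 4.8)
The plan is to reduce the computation to linearizing the generalized Ricci soliton operator and then to reorganize the resulting Levi--Civita expressions into Bismut data. Write $\Phi(g,b)=\big(\Rc_g-\tfrac14 H^2+\nabla^2 f_{(g,b)},\ d^*_gH+i_{\nabla f}H\big)$ with $H=H_0+db$ and $f_{(g,b)}$ the unique normalized minimizer realizing $\lambda$, so that, up to a fixed constant, the gradient of $\lambda$ at $(g,b)$ is $\Phi(g,b)$ and $D\lambda_{(g,b)}(h,K)$ is a multiple of $\int_M\langle \Phi(g,b),(h,K)\rangle e^{-f}dV_g$. Since $\mathcal G(g,b)$ is a steady gradient soliton, $\Phi(g,b)=0$ by $(\ref{4})$; hence when we differentiate once more in $t$ along $\mathcal G(g_t,b_t)$, every term in which $\Phi$ is not itself differentiated drops out — in particular the contributions of $\ddot g_t$, $\ddot b_t$ and of the variation of the weighted measure $e^{-f}dV_g$ — and we are left with $\tfrac{d^2}{dt^2}\big|_{0}\lambda$ equal to a constant times $\int_M\langle D\Phi_{(g,b)}(h,K),(h,K)\rangle e^{-f}dV_g$. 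This already explains why the answer depends only on the first-order data $\gamma=h-K$.

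Next I would pin down the auxiliary function. The minimizer $f=f_{(g,b)}$ is characterized by its Euler--Lagrange equation (equivalently, $e^{-f/2}$ is the lowest eigenfunction of $-4\triangle+R-\tfrac1{12}|H|^2$) together with the constraint $\int_M e^{-f}dV_g=1$; both depend smoothly on $(g,b)$ near the soliton because the bottom eigenvalue is simple, so $u:=\dot f$ is well defined. Differentiating the characterization in $t$ produces a linear elliptic equation for $u$ whose right-hand side is the linearization of $R_g-\tfrac1{12}|H|^2$; the role of the preliminary identities for $\overline{\divg}_f$ in $(\ref{div})$, $(\ref{div0})$ is precisely that, after weighted integration by parts and use of the soliton equations, this linearization collapses to $\overline{\divg}_f\overline{\divg}_f\gamma$, giving $\triangle_f u=\overline{\divg}_f\overline{\divg}_f\gamma$. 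Since $\triangle_f$ on the compact $M$ has kernel the constants, $u$ is determined up to an additive constant fixed by $\int_M ue^{-f}dV_g=0$, and that constant is immaterial in the final formula because $(\nabla^+)^2$ annihilates constants.

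The substantive step is to evaluate $\langle D\Phi_{(g,b)}(h,K),(h,K)\rangle$ and recognize its integral. I would first expand $D\Phi$ in the Levi--Civita connection: the linearized Ricci tensor $D\Rc(h)$ contributes the rough Laplacian $-\tfrac12\nabla^*\nabla h$, a curvature term, and divergence terms; the variations of $-\tfrac14 H^2$ and of $d^*_gH+i_{\nabla f}H$ contribute the many torsion terms; and the chain-rule term $\nabla^2 u$ appears from differentiating $\nabla^2 f$. The heart of the argument is then algebraic: using $(\overline{\nabla}_X\gamma)(Y,Z)=\nabla_X(\gamma(Y,Z))-\gamma(\nabla^-_XY,Z)-\gamma(Y,\nabla^+_XZ)$ and the identities relating $\nabla^{\pm}$, their torsion $H$, and the Bismut curvatures $R^{\pm}$ (the first and second Bianchi identities with torsion, $dH=0$, and the soliton equations $(\ref{4})$), one shows that the rough Laplacian together with all torsion contributions assemble into $\tfrac12\overline{\triangle}_f\gamma$, that the curvature term becomes $\mathring R^+(\gamma)$, that the leftover first-order terms organize into $\tfrac12\overline{\divg}_f^*\overline{\divg}_f\gamma$, and that the chain-rule term becomes $\tfrac12(\nabla^+)^2u$. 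Taking the $e^{-f}dV_g$-inner product with $\gamma=h-K$ (which splits as the sum of the symmetric and antisymmetric pairings) yields the stated identity.

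I expect the bookkeeping of the torsion terms to be the main obstacle: there is no single clean identity to invoke, and one must carefully track which copy of $T^*M$ each index occupies so that the split connection $\overline{\nabla}$ — using $\nabla^-$ on the first slot and $\nabla^+$ on the second — is the one that appears, and one must use the weighted ("$f$-twisted") divergence and its adjoint consistently so that every integration by parts is exact. A useful independent check is gauge invariance: $\lambda$ is constant along orbits of the group of generalized gauge transformations, so the quadratic form must vanish whenever $\gamma$ is of pure-gauge type (a Lie derivative plus a $B$-field shift); verifying that the $\overline{\divg}_f^*\overline{\divg}_f$ term and the $(\nabla^+)^2u$ term conspire to make this happen both validates the computation and explains why that divergence term must be present in the general, non-slice formula. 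Equivalently one may first restrict to the slice $\overline{\divg}_f\gamma=0$, where $u=0$ and the formula reduces to $\int_M\langle\gamma,\tfrac12\overline{\triangle}_f\gamma+\mathring R^+(\gamma)\rangle e^{-f}dV_g$, and then recover the general statement by gauge-covariance together with the slice theorem of \cite{Rubio_2019,K}.
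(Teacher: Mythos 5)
Your overall strategy is the one the paper follows: differentiate the first variation formula $\tfrac{d}{dt}\lambda=-\int_M\langle\gamma,\Rc^{H,f}\rangle e^{-f}dV_g$, use the soliton condition $\Rc^{H,f}=0$ to kill every term in which $\Rc^{H,f}$ is undifferentiated, and then reassemble the Levi--Civita linearization of $\Rc^{H,f}$ into the mixed-connection operators (this is Proposition \ref{P5} in the paper, fed by Lemmas \ref{L3} and \ref{L4}). That part of the plan is sound, even though the algebraic heart --- showing the torsion terms really do collapse into $\tfrac12\overline{\triangle}_f\gamma+\mathring R^+(\gamma)+\tfrac12\overline{\divg}_f^*\overline{\divg}_f\gamma$ --- is only asserted, not carried out.

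There is, however, a concrete error in your identification of the auxiliary function: you set $u:=\dot f$ and claim $\triangle_f\dot f=\overline{\divg}_f\overline{\divg}_f\gamma$. What differentiating the minimizer characterization actually yields is the paper's Lemma \ref{L2} (see (\ref{Df})): with $\phi=\dot f$, the combination $\tr_g h-2\phi$ satisfies $\triangle_f(\tr_g h-2\phi)=\divg_f\divg_f h-\tfrac16\langle dK,H\rangle=\overline{\divg}_f\overline{\divg}_f\gamma$, so the $u$ of the theorem is $\tr_g h-2\phi$, not $\phi$. Relatedly, the Hessian term $\tfrac12(\nabla^+)^2u$ does not arise solely "from differentiating $\nabla^2 f$": half of it is the $-\nabla_i\nabla_j(\tfrac{\tr_g h}{2})$ piece of the linearized Ricci tensor, which combines with $\nabla^2\phi$ to produce $-\tfrac12\nabla^2(\tr_gh-2\phi)$ before being promoted to $(\nabla^+)^2$. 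Your own consistency check exposes the slip: on the slice $\overline{\divg}_f\gamma=0$ one gets $\tr_gh=2\phi$, hence $u=0$ for the correct $u$, whereas $\dot f$ has no reason to vanish there. With $u=\dot f$ the stated elliptic equation is false and the final formula would come out wrong, so this step needs to be repaired by replacing $\dot f$ with $\tr_gh-2\dot f$ throughout.
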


Clearly, we have the following corollary.
\begin{corollary}\label{C1}
Every compact, Bismut-flat manifold $(M,\mathcal{G})$ is linearly stable. The kernel of the second variation on a compact, Bismut-flat manifold consists of non-trivial 2-tensors $\gamma$ which are parallel with respect to $\overline{\nabla}$.
\end{corollary}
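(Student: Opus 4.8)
The plan is to obtain \Cref{C1} from \Cref{T1} by specialising to the Bismut-flat setting. The decisive simplification is that on a Bismut-flat manifold the Bismut curvature $R^+$ vanishes identically, so $\mathring{R}^+(\gamma)\equiv 0$ and that term drops out of the second-variation formula. I would also record the (essentially structural) fact that a compact Bismut-flat generalized Ricci soliton is trivial: since $R^+=0$ forces the Bismut Ricci tensor to vanish, one has $\Rc_g=\tfrac14 H^2$ and $d^*_gH=0$, so the soliton equations \eqref{4} with $X=\nabla f$ reduce to $L_{\nabla f}g=0$ and $i_{\nabla f}H=0$; a gradient Killing field on a compact manifold has vanishing Hessian, hence vanishing Laplacian, hence $f$ is constant. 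Thus $\overline{\divg}_f=\overline{\divg}$, $\overline{\triangle}_f=\overline{\triangle}$ and $\triangle_f=\triangle$.

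Linear stability concerns variations transverse to the generalized gauge orbits, so I would invoke the slice theorem of \cite{Rubio_2019,K} — the same one used to derive \Cref{T1} — and assume $\gamma$ lies in the slice, i.e. $\overline{\divg}\,\gamma=0$. Then $\overline{\divg}^*\overline{\divg}\,\gamma=0$, and $u$ solves $\triangle u=\overline{\divg}\,\overline{\divg}\,\gamma=0$ with $\int_M u\,dV_g=0$, so $u\equiv 0$ on the compact $M$ and $(\nabla^+)^2u=0$. Substituting into \Cref{T1} and integrating by parts — which is legitimate because $\nabla^{\pm}$ are $g$-compatible, hence so is the induced connection $\overline{\nabla}$ on $\otimes^2T^*M$ — gives
\[
\frac{d^2}{dt^2}\Big|_{t=0}\lambda=\frac12\int_M\langle\gamma,\overline{\triangle}\gamma\rangle\,dV_g=-\frac12\int_M|\overline{\nabla}\gamma|^2\,dV_g\le 0,
\]
which is the asserted linear stability.

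For the kernel description, the display shows that $\gamma$ lies in the kernel of the second variation precisely when $\overline{\nabla}\gamma=0$; conversely any $\overline{\nabla}$-parallel $2$-tensor satisfies $\overline{\divg}\,\gamma=0$ (it is a contraction of $\overline{\nabla}\gamma$, using that $f$ is constant), so it lies in the slice and annihilates every term in the formula of \Cref{T1}. Hence the kernel is exactly the space of $\overline{\nabla}$-parallel $2$-tensors. Such a $\gamma$ is a non-trivial deformation because, being a nonzero element of the slice, it is not tangent to any generalized gauge orbit; the existence of these parallel tensors on Bismut-flat models, and their Lie-theoretic origin, is developed in the later sections.

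The one point that is not purely formal is the reduction to the slice: one needs that $\overline{\divg}$ is the operator whose kernel provides a slice for the action of the generalized gauge group $\Diff(M)\ltimes\Omega^2$, so that the $\overline{\divg}^*\overline{\divg}\,\gamma$ and $(\nabla^+)^2u$ terms of \Cref{T1} legitimately disappear; this is where I expect the real work to sit. Once that is granted, Bismut-flatness removes the curvature term and the non-negativity of $|\overline{\nabla}\gamma|^2$ finishes the argument.
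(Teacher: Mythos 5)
Your proposal is correct and follows essentially the same route as the paper: invoke the generalized slice theorem to reduce to $\gamma\in T_{\mathcal{G}}S^f_{\mathcal{G}}$ (so the $\overline{\divg}_f^*\overline{\divg}_f\gamma$ and $(\nabla^+)^2u$ terms vanish), use Bismut-flatness to kill $\mathring{R}^+(\gamma)$, and conclude from $\int\langle\gamma,\overline{\triangle}_f\gamma\rangle e^{-f}dV_g=-\int|\overline{\nabla}\gamma|^2e^{-f}dV_g\le 0$, with equality exactly for $\overline{\nabla}$-parallel $\gamma$. Your additional observation that $f$ is constant in the Bismut-flat case is a harmless refinement consistent with the paper's later treatment (e.g.\ \Cref{P7}).
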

Note that in \cite{K}, the author proved that linear stability and dynamical stability are equivalent so we also have the following corollary.
\begin{corollary}
   Every compact, Bismut-flat manifold $(M,\mathcal{G})$ is dynamically stable, i.e., for any neighborhood $\mathcal{U}$ of $\mathcal{G}$, there exists a smaller neighborhood $\mathcal{V}$ such that the generalized Ricci flow starting in $\mathcal{V}$ will stay in $\mathcal{U}$ for all $t\geq 0$ and converge to a critical point of $\lambda$.
\end{corollary}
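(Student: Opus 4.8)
The plan is to deduce dynamical stability from the linear stability established in \Cref{C1}, via the equivalence between linear and dynamical stability for compact steady gradient generalized Ricci solitons proved in \cite{K}. The conceptual picture is the following: along the generalized Ricci flow \eqref{1} the functional $\lambda$ is monotone nondecreasing, with equality along the flow exactly at steady gradient generalized Ricci solitons (\cite{P}); hence, modulo the generalized gauge group, a soliton $\mathcal{G}$ is dynamically stable precisely when it is, in a flow-compatible sense, a local maximum of $\lambda$. Linear stability---nonpositivity of the second-variation form in \Cref{T1} on the gauge slice---is the infinitesimal version of this statement, and it holds for Bismut-flat $\mathcal{G}$ because the curvature term $\mathring{R}^+(\gamma)$ vanishes ($R^+\equiv 0$) while the remaining operator $\tfrac{1}{2}\overline{\triangle}_f$ is nonpositive on the slice, where the divergence terms and $u$ drop out.

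Concretely, the first step is to invoke \Cref{C1}: every compact Bismut-flat $(M,\mathcal{G})$ is linearly stable, with kernel of the second variation exactly the finite-dimensional space of $\overline{\nabla}$-parallel $2$-tensors. The second step is to feed this into the stability argument of \cite{K}, which follows the by-now-standard {\L}ojasiewicz--Simon scheme (compare Haslhofer--M\"{u}ller and Kr\"{o}ncke for Ricci flow): one uses the slice theorem of \cite{Rubio_2019,K} for the generalized gauge group to present the flow, up to gauge, as a flow inside a fixed slice transverse to the orbit of $\mathcal{G}$; on that slice $\lambda$ is analytic and satisfies a {\L}ojasiewicz--Simon gradient inequality; linear stability forces $\lambda\le\lambda(\mathcal{G})$ near $\mathcal{G}$ in the slice, so monotonicity traps the gauge-modified flow in a small neighborhood; and the {\L}ojasiewicz inequality then yields that this flow has finite length and converges to a critical point of $\lambda$, that is, to a (possibly different) steady gradient generalized Ricci soliton. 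Translating back through the gauge group produces the claimed neighborhoods $\mathcal{U}\supset\mathcal{V}$.

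The main difficulty, were one to carry this out from scratch, is the generalized gauge group---the semidirect product of $\Diff(M)$ with the $B$-field transformations---which must be quotiented out simultaneously with the evolution of the $b$-component; controlling this is exactly the role of the slice theorem of \cite{Rubio_2019,K} and of the connection $\overline{\nabla}$, and it is the technical core of \cite{K}, so nothing beyond citing \Cref{C1} and \cite{K} remains. A final remark: the kernel in \Cref{C1} need not be integrable (on $S^3$ some deformations will turn out to be obstructed), but this does not affect the weak form of dynamical stability asserted here---the {\L}ojasiewicz argument still produces convergence to some critical point; only the identification of the limit with $\mathcal{G}$ itself, or the convergence rate, is sensitive to integrability.
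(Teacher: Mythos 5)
Your proposal is correct and matches the paper's approach: the corollary is obtained by combining the linear stability of \Cref{C1} with the equivalence of linear and dynamical stability for compact steady gradient generalized Ricci solitons established in \cite{K}. The additional discussion of the {\L}ojasiewicz--Simon scheme accurately describes the content of the cited result but is not needed beyond the citation.
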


In the second part of this work, we  study the moduli space of generalized Ricci solitons. In a series of papers of Koiso \cite{Ko4,Ko1,Ko2} and \cite{FA,K3,K4}, authors discuss the moduli space of Einstein metrics and Ricci solitons. We extend their work to more general setting. Define an operator
\begin{align*}
        \mathcal{R}: \quad &\mathcal{GM}\longrightarrow T^*M\otimes T^*M
        \\& \mathcal{G}(g,b)\longmapsto \Rc^{H,f}=\Rc-\frac{1}{4}H^2+\nabla^2f-\frac{1}{2}(d^*H+i_{\nabla f}H) 
    \end{align*}
where $f$ is the minimizer of $\lambda(g,b)$. So the space of steady gradient generalized Ricci solitons $\mathcal{GRS}$ can be viewed as 
\begin{align*}
    \mathcal{GRS}=\mathcal{R}^{-1}(0).
\end{align*}
Using the generalized slice theorem, we will say that \emph{the premoduli space} of steady gradient generalized Ricci solitons at $\mathcal{G}$ is the set
\begin{align*}
\mathcal{P}_{\mathcal{G}}=\mathcal{GRS}\cap \mathcal{S}^{f}_{\mathcal{G}},
\end{align*}
where $\mathcal{S}_{\mathcal{G}}^f$ is the generalized slice constructed by \Cref{T5}.
\begin{defn}\label{D1}
Let $\mathcal{G}$ be a steady gradient generalized Ricci soliton and let $\mathcal{S}_{\mathcal{G}}^f$ denote the generalized slice of $\mathcal{G}$.
\begin{itemize}
    \item A steady gradient generalized Ricci soliton $\mathcal{G}$ is called \emph{rigid} if there exists a neighborhood $\mathcal{U}$ in the space of generalized metrics $\mathcal{GM}$ such that $\mathcal{G}$ is the only element in $\mathcal{U}\cap \mathcal{S}_{\mathcal{G}}^f $.
    \item A 2-tensor $\gamma\in T_{\mathcal{G}}\mathcal{S}_{\mathcal{G}}^f$ is called an \emph{essential infinitesimal generalized solitonic deformation} of $\mathcal{G}$ if $\mathcal{R}'_{\mathcal{G}}(\gamma)=0$ where $T_{\mathcal{G}}\mathcal{S}_{\mathcal{G}}^f$ denotes the tangent space of the generalized slice $\mathcal{S}_{\mathcal{G}}^f$ at $\mathcal{G}$. 
    \item An essential infinitesimal generalized solitonic deformation $\gamma$ is \emph{integrable} if there exists a curve of steady gradient generalized Ricci solitons $\mathcal{G}(t)$ with $\mathcal{G}(0)=\mathcal{G}$ and $\frac{d}{dt}|_{t=0}\mathcal{G}(t)=\gamma$. 
\end{itemize}

\end{defn}

 In the following, we denote the set of essential infinitesimal generalized solitonic deformation to be $IGSD$. We furthermore obtain results on the rigidity of the steady gradient generalized Ricci soliton. The first step to approach the rigidity question is to discuss the existence of essential infinitesimal generalized solitonic deformations. We prove that 
\begin{theorem}\label{T2}
     Given any simply connected, compact, Bismut-flat manifold $(M,\mathcal{G})$. There exist essential infinitesimal generalized solitonic deformations of $\mathcal{G}$, and its dimension is no less than $n^2$. 
\end{theorem}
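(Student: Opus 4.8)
The plan is to build an explicit $n^{2}$-parameter family of essential infinitesimal generalized solitonic deformations out of the parallel sections of the modified connection $\overline{\nabla}$. Write $P:=\{\gamma\in\otimes^{2}T^{*}M:\overline{\nabla}\gamma=0\}$; the goal is to show $P\subseteq IGSD$ and $\dim_{\mathbb{R}}P=n^{2}$. First I would record the rigidity forced by $R^{+}\equiv0$: since $R^{+}$ vanishes so does its contraction $\Rc^{+}$, and splitting $\Rc^{+}$ into symmetric and antisymmetric parts gives $\Rc-\tfrac14H^{2}=0$ together with $d^{*}H=0$, so the soliton equations \eqref{4} with $X=\nabla f$ reduce to $\nabla^{2}f=0$ and $i_{\nabla f}H=0$. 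On the compact manifold $M$ this forces $f$ to be constant, so the minimizer of $\lambda$ is trivial, $\nabla f=0$, and in particular $\overline{\divg}_{f}=\overline{\divg}$ and $\overline{\triangle}_{f}=\overline{\triangle}$. (Recall that on a Bismut-flat manifold $R^{-}$ vanishes as well, since $dH=0$.)

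Next I would construct $P$ and verify the two membership claims. Both Bismut connections $\nabla^{\pm}$ are flat and, $M$ being simply connected, have trivial holonomy, so there exist a $\nabla^{-}$-parallel coframe $\{\alpha^{i}\}_{i=1}^{n}$ and a $\nabla^{+}$-parallel coframe $\{\beta^{j}\}_{j=1}^{n}$. Since $\overline{\nabla}$ on $T^{*}M\otimes T^{*}M$ is by definition the tensor product of $\nabla^{-}$ on the first factor and $\nabla^{+}$ on the second, it is flat with trivial holonomy; the $n^{2}$ tensors $\alpha^{i}\otimes\beta^{j}$ are $\overline{\nabla}$-parallel and linearly independent, and they span $P$, so $\dim P=n^{2}$. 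For $\gamma\in P$ every contraction of $\overline{\nabla}\gamma$ vanishes, and since $f$ is constant this gives $\overline{\divg}_{f}\gamma=0$, which is exactly the defining condition of the tangent space $T_{\mathcal{G}}\mathcal{S}^{f}_{\mathcal{G}}$ of the generalized slice of \Cref{T5}; hence $P\subseteq T_{\mathcal{G}}\mathcal{S}^{f}_{\mathcal{G}}$. Finally, because $\mathcal{R}$ is, up to sign and the weighted volume $e^{-f}dV_{g}$, the $L^{2}_{f}$-gradient of $\lambda$ (this is what makes \eqref{4} the critical point equation), the restriction of $\mathcal{R}'_{\mathcal{G}}$ to $T_{\mathcal{G}}\mathcal{S}^{f}_{\mathcal{G}}$ equals, up to a sign, the self-adjoint operator in the second variation formula of \Cref{T1}; for $\gamma$ in the slice the function $u$ with $\triangle_{f}u=\overline{\divg}_{f}\overline{\divg}_{f}\gamma$ and $\int_{M}u\,e^{-f}\,dV_{g}=0$ vanishes, and $\mathring{R}^{+}\equiv0$ since $M$ is Bismut-flat, so this operator reduces on the slice to $\tfrac12\overline{\triangle}_{f}$. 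As $\overline{\nabla}\gamma=0$ implies $\overline{\triangle}_{f}\gamma=0$, we obtain $\mathcal{R}'_{\mathcal{G}}(\gamma)=0$, i.e. $\gamma\in IGSD$, and therefore $\dim IGSD\ge n^{2}$. (The Bochner identity $\int_{M}\langle\gamma,\overline{\triangle}_{f}\gamma\rangle\,e^{-f}\,dV_{g}=-\int_{M}|\overline{\nabla}\gamma|^{2}\,e^{-f}\,dV_{g}$ even yields $\ker\overline{\triangle}_{f}=P$, hence equality, but only the lower bound is needed here.)

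The step I expect to require the most care is the identification of $\mathcal{R}'_{\mathcal{G}}|_{T_{\mathcal{G}}\mathcal{S}^{f}_{\mathcal{G}}}$ with a sign times the second variation operator of \Cref{T1}: one must track how the $\lambda$-minimizer $f$ varies with $(g,b)$ — the origin of the $u$-term — and confirm that the diffeomorphism and $B$-field gauge directions have been quotiented out so that the slice constraint is precisely $\overline{\divg}_{f}\gamma=0$. This is essentially bookkeeping already carried out in the construction of $\mathcal{S}^{f}_{\mathcal{G}}$ (\Cref{T5}) and in the derivation of \Cref{T1}, rather than new analysis. A minor remaining point is that the $\gamma\in P$ are genuine, non-gauge deformations, which is immediate: the $\alpha^{i}\otimes\beta^{j}$ are linearly independent inside $T_{\mathcal{G}}\mathcal{S}^{f}_{\mathcal{G}}$, and the slice is transverse to the gauge orbit.
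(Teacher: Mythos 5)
Your proposal is correct and takes essentially the same route as the paper: the paper proves the statement by combining \Cref{L5} and \Cref{C4} (the tensors $\omega_i^L\otimes\omega_j^R$ built from a $\nabla^-$-parallel and a $\nabla^+$-parallel coframe are non-trivial and $\overline{\nabla}$-parallel, giving $n^2$ independent elements of the kernel) with the equivalence in \Cref{P7} between $\overline{\nabla}\gamma=0$ and membership in $IGSD$ on a Bismut-flat background. The only cosmetic difference is that the paper produces the two parallel coframes via the Cartan--Schouten identification of $(M,g,H)$ with a Lie group carrying a bi-invariant metric (left- and right-invariant coframes), whereas you obtain them directly from the triviality of the holonomy of the flat connections $\nabla^{\pm}$ on a simply connected manifold --- these are the same frames, and your added observations (that $f$ is constant, and that $\dim P=n^2$ exactly) are consistent with what the paper uses implicitly.
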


Our second step is to discuss the integrability of $IGSD$. We prove the following theorem which shows that it is equivalent to computing the $k$-th derivative of $\mathcal{R}_{\mathcal{G}}$ to check the integrability.

\begin{theorem}\label{T3}
    Let $\mathcal{G}$ be a steady gradient generalized Ricci soliton. There exists a neighborhood $\mathcal{U}$ of $\mathcal{G}$ in the generalized slice $\mathcal{S}^f_{\mathcal{G}}$ and a 
    finite-dimensional real analytic submanifold $\mathcal{Z}\subset \mathcal{U}$ such that 
    \begin{itemize}
        \item $T_{\mathcal{G}}(\mathcal{Z})=IGSD$,
        \item $\mathcal{Z}$ contains the premoduli space $\mathcal{P}_{\mathcal{G}}$ as a real analytic subset.
    \end{itemize}
\end{theorem}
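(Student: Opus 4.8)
The plan is to follow the now-standard Kuranishi-type argument adapting Koiso's approach to the generalized setting. First I would set up the analytic framework: the generalized slice $\mathcal{S}^f_{\mathcal{G}}$ carries, by \Cref{T5}, a smooth Banach manifold structure once one completes in suitable Hölder or Sobolev norms, and the operator $\mathcal{R}$ restricted to the slice is a real analytic map between (completions of) these spaces — real analyticity here is crucial and comes from the fact that $\mathcal{R}$ is built from the metric, its inverse, curvature, the torsion $H$ and the minimizer $f$, all of which depend analytically on $(g,b)$ (the dependence of $f$ requires an implicit function theorem applied to the eigenvalue equation for $-4\triangle + R - \tfrac{1}{12}|H|^2$, giving analytic dependence of the lowest eigenfunction). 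The linearization $\mathcal{R}'_{\mathcal{G}}$ on the slice is, by the computation underlying \Cref{T1}, essentially $\tfrac12\overline{\triangle}_f + \mathring{R}^+ + (\text{lower order})$, an elliptic self-adjoint operator (after the gauge-fixing that defines the slice eliminates the $\overline{\divg}_f^*\overline{\divg}_f$ and $(\nabla^+)^2 u$ terms, which are exactly the directions killed by passing to the slice), so it is Fredholm of index zero with finite-dimensional kernel $IGSD = \ker \mathcal{R}'_{\mathcal{G}}$ and cokernel canonically identified with $IGSD$ via the $L^2_f$ pairing.

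Next I would invoke the Lyapunov–Schmidt / Kuranishi reduction. Write the slice (near $\mathcal{G}$) as $IGSD \oplus W$ where $W$ is the $L^2_f$-orthogonal complement of $IGSD$, and let $\pi$ be the projection onto $IGSD$. The equation $\mathcal{R} = 0$ splits as $(1-\pi)\mathcal{R}(\gamma_0 + w) = 0$ and $\pi\mathcal{R}(\gamma_0 + w) = 0$. Because $\mathcal{R}'_{\mathcal{G}}$ restricted to $W$ is an isomorphism onto $(1-\pi)(\text{image})$, the analytic implicit function theorem solves the first equation for $w = w(\gamma_0)$ as a real analytic function of $\gamma_0 \in IGSD$ near $0$, with $w(0) = 0$ and $Dw(0) = 0$. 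Substituting, define $\Psi(\gamma_0) = \pi\mathcal{R}(\gamma_0 + w(\gamma_0))$, a real analytic map from a neighborhood of $0$ in the finite-dimensional space $IGSD$ to $IGSD$ with $\Psi(0) = 0$ and $D\Psi(0) = 0$. Then set
\begin{align*}
    \mathcal{Z} = \{\, \gamma_0 + w(\gamma_0) \;:\; \gamma_0 \in IGSD \text{ near } 0,\ \Psi(\gamma_0) = 0 \,\}.
\end{align*}
This is a finite-dimensional real analytic subset (it is the graph over the real analytic set $\Psi^{-1}(0)$); shrinking, one can further arrange it to be a real analytic submanifold as claimed, or at least state it as a real analytic set, which is what is actually needed. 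By construction $\mathcal{P}_{\mathcal{G}} = \mathcal{GRS} \cap \mathcal{S}^f_{\mathcal{G}}$ near $\mathcal{G}$ is precisely $\mathcal{Z}$ intersected with the full zero set, hence contained in $\mathcal{Z}$ as a real analytic subset, and $T_{\mathcal{G}}\mathcal{Z} = \ker D\Psi(0) \oplus 0 = IGSD$ since $Dw(0) = 0$ and $D\Psi(0) = 0$.

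Two points require care. The first is the Fredholm/ellipticity and self-adjointness of $\mathcal{R}'_{\mathcal{G}}$ on the slice: one must check that the gauge-fixing conditions cutting out $T_{\mathcal{G}}\mathcal{S}^f_{\mathcal{G}}$ make the full symbol elliptic — this is where the structure of $\overline{\triangle}_f$ from \Cref{T1} and the slice theorem \Cref{T5} combine, and it is the analogue of the classical fact that the Einstein operator becomes elliptic in divergence-free gauge. The second, and the point I expect to be the main obstacle, is establishing genuine real analyticity (not merely smoothness) of the map $\mathcal{R}$ on the Banach completion of the slice, which hinges on the real analytic dependence of the minimizer $f$ on $(g,b)$; this needs the lowest eigenvalue of the Schrödinger operator to be simple (true, since it is the bottom of the spectrum with positive eigenfunction) so that the Kato-Rellich analytic perturbation theory applies, together with elliptic regularity to pass from the eigenfunction equation to analytic dependence in Hölder spaces. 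Once analyticity is in hand, the Lyapunov–Schmidt reduction is routine and the structure of $\mathcal{Z}$ follows as above, exactly paralleling Koiso's and Kröncke's treatments in the Einstein and Ricci soliton cases.
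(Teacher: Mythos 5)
Your overall strategy --- real analyticity of $\mathcal{R}$ on the slice (via analytic dependence of the minimizer $f$, which is \Cref{P4}), followed by a Lyapunov--Schmidt/Kuranishi reduction --- is the same skeleton the paper uses; the paper simply packages the reduction by citing Koiso's Lemma 13.6 and then verifies its one nontrivial hypothesis. That hypothesis is where your proposal has a genuine gap. You assert that $\mathcal{R}'_{\mathcal{G}}$ restricted to the slice is ``elliptic, self-adjoint, Fredholm of index zero,'' and that its restriction to the complement $W$ is an isomorphism onto a complemented closed subspace. But the linearization on the slice, $\mathcal{B}(\gamma)=-\tfrac12\overline{\triangle}_f\gamma-\mathring{R}^+(\gamma)$, does \emph{not} map $T_{\mathcal{G}}\mathcal{S}^f_{\mathcal{G}}$ back into $T_{\mathcal{G}}\mathcal{S}^f_{\mathcal{G}}$; its image lands in $\ker\beta_{\mathcal{G}}$, a different subspace. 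So ``check the full symbol is elliptic after gauge-fixing'' is not the right formulation of the difficulty, and Fredholmness of the restricted map does not follow from ellipticity of $\mathcal{B}$ on all of $\otimes^2T^*M$ alone. What is actually needed --- and what the paper's proof is entirely devoted to --- is that the image $\mathcal{B}(T_{\mathcal{G}}\mathcal{S}^f_{\mathcal{G}})$ is closed. The paper gets this from the commutation identity $\beta_{\mathcal{G}}(\mathcal{B}(\gamma))=\Phi(\overline{\divg}_f\gamma)$ of \Cref{L8} (the analogue of the Bianchi identity argument in Besse), which traps the image inside $\ker\beta_{\mathcal{G}}\cap\im\mathcal{B}$ and shows it has finite codimension there, whence closedness. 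Without some version of this step your implicit function theorem application is unjustified.

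There is also an error in your definition of $\mathcal{Z}$: you set $\mathcal{Z}=\{\gamma_0+w(\gamma_0):\Psi(\gamma_0)=0\}$, which is (the local model of) the premoduli space itself, not a manifold containing it. With that definition $\mathcal{Z}$ is in general only a real analytic set, possibly singular and of dimension strictly less than $\dim IGSD$, so neither the submanifold claim nor $T_{\mathcal{G}}\mathcal{Z}=IGSD$ holds, and ``shrinking to arrange it to be a submanifold'' is not available --- the whole point of the Kuranishi picture is that the zero set of $\Psi$ need not be smooth. The correct choice is the graph over all of a neighborhood of $0$ in $IGSD$, namely $\mathcal{Z}=\{\gamma_0+w(\gamma_0):\gamma_0\in IGSD \text{ near }0\}$; this is a finite-dimensional real analytic submanifold with $T_{\mathcal{G}}\mathcal{Z}=IGSD$ (since $Dw(0)=0$), and $\mathcal{P}_{\mathcal{G}}=\{\gamma_0+w(\gamma_0):\Psi(\gamma_0)=0\}$ sits inside it as the real analytic subset cut out by $\Psi$. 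With that correction and with the closed-image argument supplied, your proof goes through and coincides in substance with the paper's.
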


In the last part of this work, we focus on the Bismut-flat case. The existence of the non-trivial $IGSD$ is proved in \Cref{T2}; however, it is hard to check its integrability. In a 3-dimensional case, we can explicitly construct the essential infinitesimal generalized solitonic deformations and prove that  
\begin{theorem}\label{T4}
    Suppose $(M,g,H)$ is a 3-dimensional Bismut-flat, Einstein manifold with positive Einstein constant $\mu$. Then, any essential infinitesimal generalized solitonic deformation is of the form
\begin{align*}
    \gamma=2\mu ug+\nabla^2 u-\frac{1}{2}d^*(uH)
\end{align*}
where $u$ is an eigenfunction with eigenvalue $4\mu$. Moreover, $\gamma$ is not integrable up to the second order if 
 \begin{align*}
\int_M \mu u^2w dV_g\neq 0
\end{align*}
for some eigenfunctions $w$ with eigenvalue $4\mu$.
\end{theorem}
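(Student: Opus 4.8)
The plan is to exploit the rigid structure of a Bismut-flat, Einstein $3$-manifold (these are, up to cover and scale, $S^3$ with the round metric and $H$ a multiple of the volume form) together with the second-variation formula of \Cref{T1} and the obstruction theory behind \Cref{T3}. First I would identify the space $IGSD$ explicitly. Since $(M,\mathcal G)$ is Bismut-flat, \Cref{C1} tells us that the kernel of the second variation consists of $\overline\nabla$-parallel $2$-tensors, and the slice condition from \Cref{T5} pins these down modulo generalized gauge. Working in dimension $3$ and using that $H$ is parallel and proportional to $dV_g$, I would show by a direct computation that the linearized soliton operator $\mathcal R'_{\mathcal G}$ annihilates precisely the tensors of the stated form $\gamma = 2\mu u g + \nabla^2 u - \tfrac12 d^*(uH)$: the term $2\mu u g + \nabla^2 u$ is the familiar infinitesimal Einstein/soliton deformation controlled by the eigenvalue equation $\triangle u = 4\mu u$ (this is the classical Koiso-type condition on $S^n$), while the new term $-\tfrac12 d^*(uH)$ is exactly the $K$-component forced by the coupled equation $0 = d^*_gH + i_{\nabla f}H$ in \eqref{4} when one varies within the slice. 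The key check is that the linearization of $\mathcal R$ in the $b$-direction produces $d^*(d(\text{something}))$ terms that cancel against $L_{\nabla u}H$ precisely because $H$ is parallel.

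Next I would set up the integrability obstruction. By \Cref{T3}, $\gamma$ is integrable only if it extends to a formal curve, and the first genuine obstruction lives in the second derivative: one must solve $\mathcal R''_{\mathcal G}(\gamma,\gamma) + \mathcal R'_{\mathcal G}(\gamma_2) = 0$ for some $\gamma_2$ in the slice. Since $\mathcal R'_{\mathcal G}$ is self-adjoint (with respect to the weighted inner product $\langle\cdot,\cdot\rangle e^{-f}dV_g$) with kernel $IGSD$, the Fredholm alternative says this is solvable iff the $L^2_f$-projection of $\mathcal R''_{\mathcal G}(\gamma,\gamma)$ onto $IGSD$ vanishes, i.e. iff $\int_M \langle \mathcal R''_{\mathcal G}(\gamma,\gamma), w\text{-tensor}\rangle e^{-f}dV_g = 0$ for every $w$ in the eigenspace. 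So the real work is to compute this quadratic term. I would expand $\mathcal R = \Rc - \tfrac14 H^2 + \nabla^2 f - \tfrac12(d^*H + i_{\nabla f}H)$ to second order along a curve with $\dot{\mathcal G} = \gamma$, using $f\equiv\text{const}$ on $S^3$ to leading order (the minimizer of $\lambda$ on a homogeneous space) and tracking how $f$ itself varies at second order via the constraint $\int e^{-f}dV = 1$. The Einstein and Bismut-flatness conditions kill most curvature terms, and the dominant surviving contribution is the $-\tfrac14 H^2$ piece: since $\gamma$ contains the $K = \tfrac12 d^*(uH)$ part, $H^2$ depends quadratically on it, and on a $3$-manifold $d^*(uH) = \star d(u) $ up to scale, so this term reduces to an expression quadratic in $du$. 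Pairing against the test tensor $2\mu w g + \nabla^2 w - \tfrac12 d^*(wH)$ and integrating by parts, using $\triangle u = \triangle w = 4\mu u, 4\mu w$ repeatedly, should collapse everything to a multiple of $\int_M \mu u^2 w\, dV_g$.

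The main obstacle I anticipate is the bookkeeping in the second step: isolating the genuinely quadratic obstruction from the many terms generated by differentiating $\Rc$, $H^2$, the Hodge Laplacian on $H$, and the second variation of $f$, and showing that after all integrations by parts (repeatedly invoking the eigenfunction equation and the fact that $\mu>0$ so there are no zero modes other than constants among functions) the only term that does not manifestly vanish or integrate to zero is $c\int_M \mu u^2 w\,dV_g$ with $c\neq 0$. A secondary subtlety is justifying that it suffices to test against $w$ in the same eigenspace — this follows because $IGSD$ is parametrized exactly by the $4\mu$-eigenfunctions (from the first step), so the cokernel pairing only sees those $w$. Once the constant $c$ is pinned down and shown nonzero, the stated criterion $\int_M \mu u^2 w\,dV_g \neq 0$ for some such $w$ immediately obstructs integrability at second order, completing the proof. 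I would close by remarking that such $w$ exists in general on $S^3$ because products of first eigenfunctions decompose with a nonzero component in the same eigenspace, which is why some deformations are non-integrable while the Lie-theoretic ones from \Cref{T2} are.
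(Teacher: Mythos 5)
Your plan for the second half (the integrability obstruction) is essentially the paper's: \Cref{L9} reduces second-order integrability to the orthogonality of $\mathcal R''(\gamma,\gamma)$ to $IGSD$, and one then computes $\mathcal R''$ along the explicit path $g_t=(1+tu)g$, $b_t=t\,d^*(uH)/(2\mu)$ and pairs against $2\mu wg+\nabla^2w-\tfrac12 d^*(wH)$. Two cautions on that computation. First, it is not true that the $-\tfrac14H^2$ piece dominates: in the paper every one of the five terms of $\Rc^{H,f}$ contributes nontrivially. Second, the variation of the minimizer $f$ is not merely a normalization effect; one needs $\frac{\partial}{\partial t}\big|_{t=0}f=u/2$ and $\triangle f''\big|_{t=0}=7\mu u^2-\tfrac74|\nabla u|^2$ (\Cref{AL2}), and after integration by parts these terms are precisely what collapse the integrand to $-6\int_M\mu u^2w\,dV_g$. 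Treating $f$ as constant to leading order and adjusted only through $\int e^{-f}dV=1$ would lose an essential contribution to the constant $c$.

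The genuine gap is in the first half. You verify (plausibly) that the tensors $2\mu ug+\nabla^2u-\tfrac12 d^*(uH)$ with $\triangle u+4\mu u=0$ lie in $IGSD$, but the theorem asserts that \emph{every} essential deformation has this form, and you give no argument for the completeness direction. The paper's route is: any $\gamma=h-K\in IGSD$ is $\overline{\nabla}$-parallel (\Cref{P7}), whence $\triangle\tr_gh+4\mu\tr_gh=0$; subtracting the model deformation built from $u=\tr_gh/2\mu$ leaves an element of $IGED$ (this step uses the dimension-3-specific identity $\overline{\triangle}\big(2\mu ug+\nabla^2u-\tfrac12 d^*(uH)\big)=0$, which fails for $n\geq4$, cf.\ \Cref{RR}); and finally $IGED=\{0\}$ by \Cref{T8}, whose proof requires the integral identities $\big(\mathring{R}(h),h\big)_{L^2}=-\tfrac12\|\divg h\|^2_{L^2}=-\big(\mathring{R}(K),K\big)_{L^2}$ of \Cref{L10} and \Cref{L11} together with positivity of the sectional curvature. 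Nothing in your proposal rules out additional trace-free deformations, and appealing to the classical Koiso-type rigidity of $S^n$ does not suffice because an essential generalized Einstein deformation carries a nontrivial $K$-component that the classical theory does not see. Without this step both the classification and the parametrization of the cokernel by $4\mu$-eigenfunctions, on which your Fredholm argument relies, remain unproven.
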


In summary, in the 3-dimensional, Bismut--flat case, the dimension of the space of essential infinitesimal generalized solitonic deformation is 9. Some essential infinitesimal generalized solitonic deformations are integrable up to the second order while some are not. In \cite{classification} Corollary 1.4, Streets showed that there exists a non-trivial steady gradient generalized Ricci soliton in any dimension $n\geq3$. In particular, his proof showed that the Bismut-flat metric on $S^3$ is not rigid. Therefore, there exist an integrable essential infinitesimal generalized solitonic deformations.

The layout of this paper is as follows: in Section 2, we will mention some preliminaries regarding Courant algebroids, the generalized slice theorem, and the generalized Ricci solitons. In Section 3, we analyze the second variation formula of $\lambda$. In Section 4, we will discuss the essential infinitesimal generalized solitonic deformation and integrability properties. In Section 5, we will focus on the Bismut--flat case and provide some examples of essential infinitesimal generalized solitonic deformations.

\vspace{5pt}
\textbf{Acknowledgements:} This work is written when the author is a fourth-year math Ph.D. student at the University of California-Irvine. I am grateful to my advisor Jeffrey D.~Streets for his helpful advice. His suggestions play an important role in this work. 
\vspace{5pt}

\textbf{Declaration}: Availability of data and materials: Data sharing not applicable to this article as no data sets were generated
or analyzed during the current study.

\section{Preliminary}

\subsection{Notations}
In this work, we will use the following notation. Suppose  $h\in \Gamma(S^2M)$ and $K\in \Omega^2$,
\begin{align*}
   \mathring{R} (h)_{jk}= R_{ijkl}h_{il}, \quad \mathring{R}^+ (h)_{jk}= R^+_{ijkl}h_{il}, \quad \mathring{R} (K)_{jk}= R_{ijkl}K_{il}, \quad \mathring{R}^+ (K)_{jk}= R^+_{ijkl}K_{il},
\end{align*}
where $R^+$ denotes the Bismut curvature which is defined in \Cref{P4}. Besides, we will consider the $f$-twisted $L^2$ inner product 
\begin{align}
    \Big( \gamma_1,\gamma_2\Big)_{f}\coloneqq\int_M \langle \gamma_1,\gamma_2 \rangle_g  e^{-f} dV_g. \label{6}
\end{align}
where $\langle\kern0.3em,\kern0.3em\rangle_g$ denotes the standard inner product induced by a Riemannian metric $g$ and $\gamma_1,\gamma_2$ are tensors of same order. In particular, we mainly focus on the case when $\gamma$ is a 2-tensor. We then require the notation
\begin{align}
    \nonumber&\divg_f\omega=\nabla_i\omega_i-\nabla_i f\omega_i,\quad (\divg_fh)_i=\nabla_jh_{ji}-\nabla_jf h_{ji},
    \quad (\divg_f^*\omega)_{ij}=-\frac{1}{2}(\nabla_i\omega_j+\nabla_j\omega_i)=(\divg^*\omega)_{ij}
    \\&\triangle_f=\triangle-\nabla f\cdot\nabla, \quad d_f^*=d^*+i_{\nabla f},\quad  \text{where $\omega\in\Omega^1$ and $h\in \Gamma(S^2M)$.}
\end{align}
Later, we will use the different connection $\nabla^{\pm}$ which is defined in (\ref{13}). Thus, we will denote $\divg^{\pm}$ as the divergence operator computed by using connction $\nabla^{\pm}$ and $\divg^{\pm,*}$ as their formal adjoint.
\subsection{Generalized Geometry}
In this section, we review some basic definitions and properties of generalized geometry. More details can be found in \cite{GRF}.

\begin{defn}\label{D2}
A \emph{Courant algebroid} is a vector bundle $E\longrightarrow M$ with a nondegenerate bilinear form $\langle \cdot,\cdot\rangle$, a bracket $[\cdot,\cdot]$ on $\Gamma(E)$ and a bundle map $\pi:E\longrightarrow TM$ satisfies that for all $a,b,c\in\Gamma(E)$, $f\in C^\infty(M)$,
\begin{itemize}
    \item $[a,[b,c]]=[[a,b],c]+[b,[a,c]]$.
    \item $ \pi[a,b]=[\pi(a),\pi(b)].$
    \item $[a,fb]=f[a,b]+\pi(a)fb.$
    \item $\pi(a)\langle b,c\rangle=\langle [a,b],c \rangle+\langle a,[b,c] \rangle.$
    \item $[a,b]+[b,a]=\mathcal{D}\langle a,b \rangle $ where $\mathcal{D}: C^\infty(M)\to \Gamma(E)$ is given by $\mathcal{D}(\phi)\coloneqq\pi^*(d\phi)$.
\end{itemize}
We say a Courant algebroid $E$ is \emph{exact} if we have the following exact sequence of vector bundles 
\[  \begin{tikzcd}
  0 \arrow[r] & T^*M  \arrow[r, "\pi^*"] & E \arrow[r, "\pi"] & TM  \arrow[r] & 0 .
\end{tikzcd}
\]
\end{defn}

\begin{defn}\label{D3}
Let $E$ be an exact Courant algebroid. The \emph{automorphism group} $\Aut(E)$ of $E$ is a pair $(f,F)$ where $f\in\Diff(M)$ and $F\colon E\to E$ is a bundle map such that for all $u,v\in \Gamma(E)$
\begin{itemize}
    \item $ \langle Fu,Fv \rangle=f_{*}\langle u,v\rangle.$
    \item $[Fu,Fv]=F[u,v].$
    \item $\pi_{TM}\circ F=f_{*}\circ \pi_{TM}.$
\end{itemize}

\end{defn}

\begin{defn}\label{D4}
Given a smooth manifold $M$ and an exact Courant algebroid $E$ over $M$, a \emph{generalized metric} on $E$ is a bundle endomorphism $\mathcal{G}\in \Gamma(\End(E))$ satisfying 
\begin{itemize}
    \item $\langle \mathcal{G}a,\mathcal{G}b \rangle=\langle a,b\rangle.$
    \item $ \langle \mathcal{G}a,b \rangle=\langle a,\mathcal{G}b\rangle.$
    \item $ \langle \mathcal{G}a,b \rangle \text{  is symmetric and positive definite for any $a,b\in E$.}$
\end{itemize}

\end{defn}

\begin{example}
The most common and important example of Courant algebroids is $TM\oplus T^*M$. In this case, we define a nondegenerate bilinear form $\langle\cdot,\cdot\rangle$ and a bracket $[\cdot,\cdot]$ on $TM\oplus T^*M$ by
\begin{align*}
    \langle X+\xi,Y+\eta \rangle&\coloneqq\frac{1}{2}(\xi(Y)+\eta(X)),
    \\ [X+\xi,Y+\eta]_H&\coloneqq[X,Y]+L_X\eta-i_Y d\xi+i_Yi_XH
\end{align*}
where $X,Y\in TM$,  $\xi,\eta\in T^*M$ and $H$ is a 3-form. Define $\pi$ to be the standard projection, one can check that $(TM\oplus T^*M)_H\coloneqq(TM\oplus T^*M,\langle\cdot,\cdot\rangle,[\cdot,\cdot]_H,\pi)$ satisfies the Courant algebroid conditions. Moreover, its automorphism groups are given as follows.
\begin{align*}
   \GDiff_H=\{(f,\overline{f}\circ e^B): f\in\Diff(M), B\in \Omega^2 \text{  such that  } f^*H=H-dB\},
\end{align*}
where
\begin{align*}
    \overline{f}&=\begin{pmatrix} f_{\star} & 0 \\ 0 & (f^*)^{-1} \end{pmatrix}: X+\alpha\longmapsto f_{*}X+(f^*)^{-1}(\alpha),
    \\ e^B&=\begin{pmatrix} Id & 0 \\ B & Id \end{pmatrix}: X+\alpha\longmapsto X+\alpha+i_XB  \qquad \text{  for any $X\in TM$ and $\alpha\in T^*M$}.
\end{align*}
The product of automorphisms is given by
\begin{align*}
    (f,F)\circ (f',F')=\overline{f\circ f'}\circ e^{B'+f'^*B} \quad \text{  where } F=\overline{f}\circ e^B,\quad F'=\overline{f'}\circ e^{B'}.
\end{align*}
In the following, we will denote $\GDiff_H$ to be the automorphism group of $(TM\oplus T^*M)_H$, $\mathcal{GM}$ to be the space of all generalized metrics, and $\mathcal{M}$ to be the space of all Riemannian metrics. 
\end{example}

Recall that in \cite{GRF} Proposition 2.10, we see that for any exact Courant
Courant algebroid $E$ with a isotropic splitting $\sigma$, $E\cong_\sigma (TM\oplus T^*M)_H$ where 
\begin{align*}
    H(X,Y,Z)=2\langle [\sigma X,\sigma Y], \sigma Z \rangle \quad X,Y,Z \in TM. 
\end{align*}
Therefore, we see that
\begin{align*}
    \Aut(E)\cong_{\sigma}\GDiff_H=\{(\varphi,B)\in\Diff(M)\ltimes\Omega^2: \varphi^*H=H-dB\}.
\end{align*}
Moreover, we have the following proposition.

\begin{proposition}[\cite{GRF} Proposition 2.38 and 2.40] \label{P2}
Let $E$ be an exact Courant algebroid. The space of all generalized metrics $\mathcal{GM}$ on $E$ is isomorphic to $\mathcal{M}\times \Omega^2$.
\end{proposition}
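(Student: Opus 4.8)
The plan is to extract the pair $(g,b)$ from the $+1$-eigenbundle of $\mathcal{G}$. First I would observe that the first two axioms in \Cref{D4} force $\mathcal{G}$ to be an involution: self-adjointness gives $\langle a,\mathcal{G}^2 b\rangle=\langle\mathcal{G}a,\mathcal{G}b\rangle=\langle a,b\rangle$ for all $a$, and nondegeneracy of $\langle\cdot,\cdot\rangle$ then yields $\mathcal{G}^2=\mathrm{Id}$. Hence $E=V_+\oplus V_-$ splits into the $\pm1$-eigenbundles, which are $\langle\cdot,\cdot\rangle$-orthogonal since $\mathcal{G}$ is self-adjoint, and the positivity axiom translates into $\langle\cdot,\cdot\rangle|_{V_+}>0$ and $\langle\cdot,\cdot\rangle|_{V_-}<0$. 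Since the neutral pairing on an exact Courant algebroid has signature $(n,n)$, a dimension count forces $\mathrm{rk}\,V_\pm=n$.

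Next I would use an isotropic splitting $\sigma$ to identify $E\cong(TM\oplus T^*M)_H$, under which $T^*M$ is an isotropic subbundle. Because $\langle\cdot,\cdot\rangle$ is positive definite on $V_+$ and vanishes on $T^*M$, we get $V_+\cap T^*M=0$, so $\pi\colon V_+\to TM$ is an isomorphism of rank-$n$ bundles. Thus $V_+$ is the graph of a bundle map $TM\to T^*M$, which I write as $X\mapsto(g+b)(X)$ with $g$ symmetric and $b$ skew. Evaluating the pairing on such an element gives $\langle X+(g+b)X,\,X+(g+b)X\rangle=g(X,X)$, so positivity on $V_+$ says exactly that $g$ is a Riemannian metric; this produces a map $\mathcal{GM}\to\mathcal{M}\times\Omega^2$, $\mathcal{G}\mapsto(g,b)$. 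For the inverse, given $(g,b)$ I set $V_+=\{X+(g+b)X\}$ and check by a one-line computation that its $\langle\cdot,\cdot\rangle$-orthogonal complement is $V_-=\{X+(-g+b)X\}$; declaring $\mathcal{G}=+\mathrm{Id}$ on $V_+$ and $-\mathrm{Id}$ on $V_-$ and verifying the three axioms of \Cref{D4} is then direct ($\mathcal{G}$ is orthogonal and self-adjoint because $V_\pm$ are orthogonal eigenbundles, and $\langle\mathcal{G}a,a\rangle=g(X,X)+g(Y,Y)$ when $a=a_++a_-$ with $a_\pm$ the components attached to $X,Y$, which is positive definite). These two assignments are mutually inverse, giving the claimed isomorphism.

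The only step carrying real content is the linear-algebra observation that a maximal positive-definite subbundle of the split-signature form meets the fixed isotropic subbundle $T^*M$ trivially, so that $\pi|_{V_+}$ is an isomorphism and $V_+$ is a graph; the rest is bookkeeping and checking axioms. I would also flag that the bijection depends on the choice of isotropic splitting $\sigma$: changing $\sigma$ by a $B$-field $e^\beta$ (with $d\beta$ absorbing the change in $H$) shifts $b\mapsto b+\beta$ but leaves $g$ and the target $\mathcal{M}\times\Omega^2$ unchanged, so the identification is natural up to this well-understood ambiguity — which is all that is needed for the slice-theoretic arguments in the later sections.
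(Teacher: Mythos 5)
Your proof is correct and is essentially the standard argument that the paper cites from \cite{GRF} (Propositions 2.38 and 2.40): the involution property, the eigenbundle decomposition $E=V_+\oplus V_-$, and the identification of $V_+$ as the graph of $g+b$ after choosing an isotropic splitting. Your closing remark on the dependence of $b$ on the splitting $\sigma$ is exactly the point the paper records in \Cref{R1}, so nothing is missing.
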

\begin{remark}\label{R1}
Fix a background 3-form $H_0$ such that $E\cong (TM\oplus T^*M)_{H_0}$, the proof of \Cref{P2} implies that the 3-form $H$ of any generalized metric $\mathcal{G}=\mathcal{G}(g,b)$ is induced by an isotropic splitting $\sigma(X)=X+i_X b$ and then we have $H=H_0+db$. (See \cite{K} Remark 2.7 for more details.)   
\end{remark}

\subsection{Generalized Slice Theorem}

Recall that on $\mathcal{M}$, we have a natural group action which is given by 
\begin{align*}
    \rho_{\mathcal{M}}:\nonumber\quad &\Diff(M) \times \mathcal{M} \longrightarrow \mathcal{M}
    \\&(\varphi,g)\longmapsto \varphi^*g. 
\end{align*}
The quotient of $\mathcal{M}$ in terms of the action $\rho_{\mathcal{M}}$ is called the moduli space of Riemannian metrics. In order to study the moduli space of Riemannian metrics, Ebin proposed his slice theorem \cite{MR0267604} which proved the existence of a slice for the diffeomorphism group of a compact manifold acting on $\mathcal{M}$. 

In the generalized geometry, we define the $\GDiff_H$ action on generalized metrics by
\begin{align}
    \rho_{\mathcal{GM}}:\nonumber\quad &\GDiff_H \times \mathcal{GM} \longrightarrow \mathcal{GM}
    \\&((\varphi,B),(g,b))\longmapsto (\varphi^*g,\varphi^*b-B). \label{GA}
\end{align}
Here, we note that $\mathcal{GM}\cong \mathcal{M}\times \Omega^2 $ so in the following, we will always denote a generalized metric $\mathcal{G}$ by $\mathcal{G}(g,b)$ for some $(g,b)\in \mathcal{M}\times \Omega^2$. Therefore,
\begin{align*}
    T_{\mathcal{G}}\mathcal{GM}= T_{\mathcal{G}}(\mathcal{M}\times \Omega^2)=\Gamma(S^2M)\times \Omega^2=\otimes^2 T^*M.
\end{align*}
Naturally, we have a $L^2$ inner product on $T_{\mathcal{G}}\mathcal{GM}$ defined by
\begin{align}
    \Big(\gamma_1,\gamma_2\Big)=\Big((h_1,k_1),(h_2,k_2)\Big)\coloneqq\int_M \Big(\langle h_1,h_2 \rangle_g+\langle k_1,k_2 \rangle_g\Big)  dV_g  \label{I1}
\end{align}
where $\gamma_1=h_1-k_1,\gamma_2=h_2-k_2\in \otimes^2T^*M$ and   $ 
 (h_1,k_1),(h_2,k_2)\in \Gamma(S^2M)\times \Omega^2$.

In \cite{Rubio_2019}, Rubio and Tipler proposed the generalized Ebin's slice theorem based on the inner product (\ref{I1}). In \cite{K}, we proved the generalized slice theorem based on the $f$-twisted inner product (\ref{6}). The precise statement is as follows.
\begin{theorem}[\cite{K} Theorem 2.14]\label{T5}
Let $\mathcal{G}$ be a generalized metric on an exact Courant algebroid $E$ and $f$ be $\Isom(\mathcal{G})$ invariant, then there exists an submanifold $S^f_\mathcal{G}$ of $\mathcal{GM}$ such that 
\begin{itemize}
    \item $\forall F\in \Isom_H(\mathcal{G}),\, F\cdot S^f_\mathcal{G}=S^f_\mathcal{G}$.
    \item $ \forall F\in \GDiff_H, \text{if $(F\cdot S^f_\mathcal{G})\cap S^f_\mathcal{G}=\emptyset $ then }F\in \Isom_H(\mathcal{G})$.
    \item There exists a local cross section $\chi$ of the map $F\longmapsto \rho_{\mathcal{GM}}(F,\mathcal{G})$ on a neighborhood $\mathcal{U}$ of $\mathcal{G}$ in the orbit space $\mathcal{O}_{\mathcal{G}}=\GDiff_H\cdot\mathcal{G}$ such that the map from $\mathcal{U}\times S^f_{\mathcal{G}}\longrightarrow \mathcal{GM}$ given by $(V_1,V_2)\longmapsto \rho_{\mathcal{GM}}(\chi(V_1),V_2)$ is a homeomorphism onto its image.
\end{itemize}
where $\Isom_H(\mathcal{G})$ is the isotropy group of $\mathcal{G}$ under the $\GDiff_H$-action and it is called the group of generalized isometries of $\mathcal{G}\in\mathcal{GM}$. Moreover, the tangent space of the generalized slice on a generalized metric $\mathcal{G}(g,b)$ is given by
\begin{align}
    T_{\mathcal{G}}S^f_\mathcal{G}=\{\gamma=h-K\in\otimes^2T^*M: \kern0.5em (\divg_fh)_l=\frac{1}{2}K_{ab}H_{lab},  \kern0.5em d_f^*K=0 \}. 
\end{align}
\end{theorem}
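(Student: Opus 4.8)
The plan is to prove this as a generalized version of Ebin's slice theorem, adapting the argument of Rubio--Tipler \cite{Rubio_2019} for the inner product \eqref{I1} to the $f$-twisted inner product \eqref{6}. The observation that makes the adaptation routine is that, $M$ being compact, $e^{-f}$ is bounded above and below by positive constants, so the $f$-twisted norm on $\otimes^2T^*M$ is equivalent to the standard one; hence every functional-analytic ingredient (ellipticity, closed range, properness of the action, compactness of the isotropy) carries over verbatim, and the only change is that the \emph{formal adjoints} acquire $f$-weights — which is exactly what turns $\divg,d^*$ into $\divg_f,d_f^*$ in the description of $T_{\mathcal{G}}S^f_{\mathcal{G}}$. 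One works throughout in Sobolev completions $\mathcal{GM}^s\cong\mathcal{M}^s\times(\Omega^2)^s$ and $\GDiff_H^s$ with $s$ large, upgrading to $C^\infty$ at the end.

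The first step is to identify the tangent space to the $\GDiff_H$-orbit $\mathcal{O}_{\mathcal{G}}$ through $\mathcal{G}(g,b)$. Differentiating the action \eqref{GA} together with the defining relation $\varphi^*H_0=H_0-dB$ of $\GDiff_{H_0}$ at the identity, the infinitesimal action is $\delta\colon(X,\beta)\mapsto(L_Xg,\,L_Xb-\beta)$ on pairs with $L_XH_0=-d\beta$; writing $\beta=-i_XH_0+\eta$ with $d\eta=0$ and using $L_Xb+i_XH_0=i_XH+d(i_Xb)$ with $H=H_0+db$ (so the $3$-form of $\mathcal{G}(g,b)$, cf.\ \Cref{R1}, enters), this becomes $\delta(X,\eta)=(L_Xg,\,i_XH-\eta)$ after absorbing the exact term into $\eta$ (again arbitrary closed). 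Thus $T_{\mathcal{G}}\mathcal{O}_{\mathcal{G}}=\operatorname{im}\delta$. I would then compute the formal adjoint $\delta_f^*$ with respect to \eqref{6}: using $(\divg^*\omega,h)_f=(\omega,\divg_fh)_f$ the symmetric block gives (a multiple of) $h\mapsto\divg_fh$, the coupling $X\mapsto i_XH$ contributes a zeroth-order term built from $K_{ab}H_{lab}$, and the closed-form directions force $d_f^*K=0$; matching these recovers precisely the two equations $(\divg_fh)_l=\tfrac12K_{ab}H_{lab}$, $d_f^*K=0$ cutting out $\ker\delta_f^*$. Since the coupling is of order zero and the closed-form directions form the closed subspace $\ker d$, the closed-range argument of \cite{Rubio_2019} applies: the principal symbol of the remaining (diffeomorphism) part of $\delta_f^*\delta$ equals that of the Berger--Ebin operator and is elliptic, so $\operatorname{im}\delta$ is closed, $\ker\delta$ — the Lie algebra of $\Isom_H(\mathcal{G})$, whose elements are the ``generalized Killing fields'' — is finite-dimensional, and $T_{\mathcal{G}}\mathcal{GM}=\operatorname{im}\delta\oplus\ker\delta_f^*$ is a topological splitting.

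With the splitting in hand I would build the slice in the standard way. Since $\Isom_H(\mathcal{G})$ embeds into the compact group $\Isom(g)$ (a generalized isometry preserves $g$, and its $B$-field part is then determined by the diffeomorphism), it is compact; choose an $\Isom_H(\mathcal{G})$-invariant complement $\mathfrak{w}$ of $\ker\delta$ in $\operatorname{Lie}(\GDiff_H)$ and an invariant strong Riemannian metric on $\mathcal{GM}^s$, put $S^f_{\mathcal{G}}:=\exp_{\mathcal{G}}(\mathcal{V}_\varepsilon)$ with $\mathcal{V}=\ker\delta_f^*$, and consider $\Phi\colon\mathfrak{w}_\varepsilon\times\mathcal{V}_\varepsilon\to\mathcal{GM}$, $\Phi(w,v)=\exp(w)\cdot\exp_{\mathcal{G}}(v)$. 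Its differential at $(0,0)$ is $\delta|_{\mathfrak{w}}\oplus\operatorname{id}_{\mathcal{V}}$, which by the previous paragraph is a Banach-space isomorphism onto $\operatorname{im}\delta\oplus\mathcal{V}=T_{\mathcal{G}}\mathcal{GM}$; the inverse function theorem then makes $\Phi$ a diffeomorphism onto a neighborhood $\mathcal{U}$ of $\mathcal{G}$, which yields the local cross-section $\chi$ and the homeomorphism statement of the third bullet, while equivariance of the invariant exponential gives $F\cdot S^f_{\mathcal{G}}=S^f_{\mathcal{G}}$ for $F\in\Isom_H(\mathcal{G})$, the first bullet. The tangent space of $S^f_{\mathcal{G}}$ at $\mathcal{G}$ is $\mathcal{V}=\ker\delta_f^*$, which is the asserted set.

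I expect the genuinely delicate points to be the second bullet and the passage to $C^\infty$. For the second bullet one needs the $\GDiff_H$-action on $\mathcal{GM}^s$ to be \emph{proper}: this is the generalized analogue of Ebin's theorem, proved by combining properness of the $\Diff(M)$-action on $\mathcal{M}^s$ (which rests on compactness of $M$) with the observation that the $B$-field factor acts by translations on a fixed affine space; granting properness, local injectivity of $\Phi$, compactness of $\Isom_H(\mathcal{G})$, and a shrinking argument give the implication $F\cdot S^f_{\mathcal{G}}\cap S^f_{\mathcal{G}}\neq\varnothing\Rightarrow F\in\Isom_H(\mathcal{G})$. Finally, $S^f_{\mathcal{G}}\cap\mathcal{GM}$ is the desired smooth slice: one checks that generalized Killing fields of a smooth $\mathcal{G}$ are smooth and that no derivatives are lost transverse to the orbit, so the $H^s$-slice restricts to a $C^\infty$-submanifold with the same tangent space. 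Everything else is a formal consequence of the elliptic splitting and the inverse function theorem, so I would concentrate my effort on the properness estimate and the regularity bookkeeping.
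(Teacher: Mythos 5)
The paper does not prove this statement: it is imported verbatim from \cite{K}, Theorem~2.14, and the only indication of method given here is the remark that \cite{K} reruns the Rubio--Tipler generalized slice theorem \cite{Rubio_2019} with the $f$-twisted inner product (\ref{6}) in place of (\ref{I1}). Your proposal follows exactly that strategy --- equivalence of the weighted and unweighted norms on a compact $M$, the infinitesimal action $\delta(X,\eta)=(L_Xg,\,i_XH-\eta)$ with $\eta$ closed, ellipticity of the Berger--Ebin-type operator, the splitting $T_{\mathcal{G}}\mathcal{GM}=\im\delta\oplus\ker\delta_f^*$, compactness of $\Isom_H(\mathcal{G})$ via its embedding into $\Isom(g)$, properness, and the inverse function theorem --- so it is the same approach as the cited source, and the skeleton is sound. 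Two caveats. First, the hypothesis that $f$ is $\Isom(\mathcal{G})$-invariant is precisely what makes the $f$-twisted inner product, hence the complement $\ker\delta_f^*$, invariant under the isotropy group; you use an ``invariant metric'' without saying where this invariance comes from, and the first bullet depends on it. Second, your step ``the closed-form directions force $d_f^*K=0$'' is not quite what the orthogonality computation gives: the weighted orthogonal complement of $\ker d\subset\Omega^2$ is $\im\bigl(d_f^*\colon\Omega^3\to\Omega^2\bigr)$, which sits inside $\ker d_f^*$ but differs from it by the $f$-harmonic $2$-forms, i.e.\ by $H^2_{dR}(M)$. This coincides with the stated condition whenever $b_2(M)=0$ (which covers $S^3$ and the semisimple Lie groups used later), but in general the discrepancy must either be resolved or acknowledged as inherited from the formulation in \cite{K}; as written, your derivation lands on a subspace a priori smaller than the one in the statement.
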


\subsection{Bismut connection and curvature}
In this subsection, we aim to discuss the generalized Ricci flow and generalized Ricci solitons. Most of the contents can be found in \cite{GRF} Section 4.
 
\begin{defn}\label{D5}
Let $(M,g, H)$ be a Riemannian manifold and $H\in\Omega^3$. The \emph{Bismut connections} $\nabla^{\pm}$ associated to $(g,H)$ are defined as 
\begin{align}
    \langle \nabla_X^{\pm}Y,Z \rangle=\langle \nabla_XY,Z \rangle\pm \frac{1}{2}H(X,Y,Z) \quad \text{ for all tangent vectors $X,Y,Z$.} \label{13}
\end{align}
Here, $\nabla$ is the Levi-Civita connection associated with $g$, i.e., $ \nabla^{\pm}$ are the unique compatible connections with torsion $\pm H$. Later, we will mainly use $\nabla^+$.
\end{defn}

Following the definitions, we are able to compute the curvature tensor of the Bismut connection.
\begin{proposition}[\cite{GRF} Proposition 3.18]\label{P3}
Let $(M^n,g,H)$ be a Riemannian manifold with $H\in \Omega^3$ and $dH=0$, then for any vector fields $X,Y,Z,W$ we have
\begin{align*}
    \nonumber Rm^+(X,Y,Z,W)=& \kern0.2em Rm(X,Y,Z,W)+\frac{1}{2}\nabla_XH(Y,Z,W)-\frac{1}{2}\nabla_YH(X,Z,W)
    \\&-\frac{1}{4}\langle H(X,W),H(Y,Z)\rangle+\frac{1}{4}\langle H(Y,W),H(X,Z)\rangle, 
    \\\Rc^+&=\Rc-\frac{1}{4}H^2-\frac{1}{2}d^*H, \quad R^+=R-\frac{1}{4}|H|^2,
\end{align*}
where $H^2(X,Y)=\langle i_XH,i_YH\rangle$. Here $Rm^+$, $\Rc^+$, $R^+$ denote the Riemannian curvature, Ricci curvature, and scalar curvature with respect to the Bismut connection $\nabla^+$. In particular, if $(M,g,H)$ is Bismut-flat, then 
\begin{align*}
    Rm(X,Y,Z,W)=\frac{1}{4}\langle H(X,W),H(Y,Z)\rangle-\frac{1}{4}\langle H(Y,W),H(X,Z)\rangle\quad \text{and } \quad \nabla H=0
\end{align*}
for any vector fields $X,Y,Z,W$.
\end{proposition}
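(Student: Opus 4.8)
The whole statement reduces to the elementary formula for how the curvature of an affine connection changes under the addition of a tensorial term, combined with repeated use of the total antisymmetry of $H$. Write $\nabla^+=\nabla+A$, where $A\in\Gamma(T^*M\otimes\End TM)$ is defined by $A(X,Y)=\tfrac12 g^{-1}\big(H(X,Y,\cdot)\big)$; by \eqref{13} this is exactly the Bismut connection, and the fact that $H$ is totally skew makes $(X,Y,Z)\mapsto\langle A(X,Y),Z\rangle$ totally antisymmetric. Since $\nabla$ is torsion-free, the general identity for the curvature of $\nabla+A$ reads
\[
Rm^+(X,Y,Z,W)=Rm(X,Y,Z,W)+\big\langle(\nabla_XA)(Y,Z)-(\nabla_YA)(X,Z),W\big\rangle+\big\langle A(X,A(Y,Z))-A(Y,A(X,Z)),W\big\rangle .
\]
First I would substitute $A=\tfrac12 g^{-1}H$. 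Because $\nabla g=0$, the middle bracket is $\tfrac12\big((\nabla_XH)(Y,Z,W)-(\nabla_YH)(X,Z,W)\big)$, and by total antisymmetry $\langle A(X,A(Y,Z)),W\rangle=-\langle A(X,W),A(Y,Z)\rangle=-\tfrac14\langle H(X,W),H(Y,Z)\rangle$, which produces precisely the two quadratic terms in the asserted formula for $Rm^+$.

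Next I would trace over a $g$-orthonormal frame $\{e_i\}$ to obtain $\Rc^+$ and $R^+$. In the gradient term, contracting $(X,W)$ turns $(\nabla_{e_i}H)(Y,Z,e_i)$ into $-(d^*H)(Y,Z)$ after a cyclic relabelling of the three slots of $H$, while $(\nabla_YH)(e_i,Z,e_i)$ contracts to zero by antisymmetry; this is the term $-\tfrac12 d^*H$. In the quadratic term $\langle H(e_i,e_i),H(Y,Z)\rangle=0$ and the contraction of $\langle H(Y,e_i),H(e_i,Z)\rangle$ collapses, again by antisymmetry of $H$, to a multiple of $H^2(Y,Z)=\langle i_YH,i_ZH\rangle$ matching the stated normalization, giving $\Rc^+=\Rc-\tfrac14H^2-\tfrac12 d^*H$. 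Tracing once more, $d^*H$ is trace-free as the divergence of a form and $\tr_g H^2=|H|^2$, so $R^+=R-\tfrac14|H|^2$.

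For the Bismut-flat case the key observation is that one should first upgrade the flatness of $\nabla^+$ to that of $\nabla^-=\nabla-\tfrac12 H$. Comparing the formula above with its analogue for $\nabla^-$ shows that $Rm^+(X,Y,Z,W)-Rm^-(Z,W,X,Y)$ equals a combination of $\nabla H$-terms which, after reindexing via the antisymmetry of $H$ and the pair symmetry of $Rm$, is exactly $dH(X,Y,Z,W)$; since $dH=0$ we conclude $Rm^-\equiv 0$ as well. Subtracting the two curvature formulas (their quadratic parts coincide and their gradient parts differ only by an overall sign) then forces $(\nabla_XH)(Y,Z,W)=(\nabla_YH)(X,Z,W)$ for all $X,Y,Z,W$; a tensor symmetric in $(X,Y)$ and antisymmetric in $(Y,Z)$ and $(Z,W)$ must vanish, so $\nabla H=0$. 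Substituting $\nabla H=0$ and $Rm^+=0$ into the $Rm^+$-formula yields the stated identity for the Levi-Civita curvature.

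The only real difficulty I anticipate is bookkeeping: keeping the normalization conventions for $d^*$, $|H|^2$ and $H^2$ mutually consistent through the two traces, and—in the Bismut-flat step—recognizing that one must route through $\nabla^-$ using $dH=0$, since $Rm^+=0$ by itself does not directly give $\nabla H=0$.
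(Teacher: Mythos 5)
Your proposal is correct and follows the standard argument for this result, which the paper does not prove itself but imports from \cite{GRF} (Proposition 3.18): the curvature formula for $\nabla+A$ with $A=\tfrac12 g^{-1}H$, the two traces, and the Bismut-flat case via the identity $Rm^+(X,Y,Z,W)-Rm^-(Z,W,X,Y)=\tfrac12 dH(X,Y,Z,W)$, which forces $Rm^-=0$ and then $\nabla H=0$ from the symmetry/antisymmetry clash. Your instinct that $Rm^+=0$ alone does not yield $\nabla H=0$ and that one must route through $\nabla^-$ using $dH=0$ is exactly the right (and only nontrivial) point.
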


In \cite{J2}, the author defined general tensors regarding the Bismut connection. Later, we will see that these quantities are related to the generalized Einstein--Hilbert functional.
\begin{defn}\label{D6}
Given a metric $g$, closed three-form $H$, and smooth function $f$, a triple $(g,H,f)$ determines a \emph{twisted Bakry-Emery curvature}
\begin{align}
    \Rc^{H,f}=\Rc-\frac{1}{4}H^2+\frac{1}{2}\nabla^2 f -\frac{1}{2}(d^*H+i_{\nabla f}H)  \label{BC}
\end{align}
and a \emph{generalized scalar curvature}
\begin{align}
    R^{H,f}=R-\frac{1}{12}|H|^2+2\triangle f-|\nabla f|^2.\label{RC}
\end{align}
\end{defn}

As we mentioned in the introduction, we will consider a special connection $\overline{\nabla}$ on 2-tensors. 
\begin{defn}\label{D7}
Let $(M,g,H)$ be a Riemannian manifold, $H\in\Omega^3$ and $\nabla^{\pm}$ be the Bismut connections given in (\ref{13}). The \emph{mixed Bismut connection} is a connection $\overline{\nabla}$ on 2-tensors defined as follows. For any $\gamma\in \otimes^2T^*M$ and tangent vectors $X,Y,Z$, we have
\begin{align}
    (\overline{\nabla}_X\gamma)(Y,Z)=\nabla_X\Big(\gamma(Y,Z)\Big)-\gamma(\nabla^-_XY,Z)-\gamma(Y,\nabla^+_XZ). \label{MC}
\end{align}

\end{defn}

\begin{defn}
Let $\overline{\nabla}$ be the mixed connection. 
\begin{itemize}
    \item The \emph{$f$-twisted divergence operator} $\overline{\divg}_f:  \otimes^2T^*M\longrightarrow T^*M\times T^*M$ on 2-tensors with respect to $\overline{\nabla}$ is given by
\begin{align}
       (\overline{\divg}_f\gamma)_l=(\overline{\nabla}^m\gamma_{ml}-\nabla_mf \gamma_{ml},\overline{\nabla}^m\gamma_{lm}-\nabla_mf \gamma_{lm}),\label{div}
\end{align}
\item The \emph{$f$-twisted divergence operator} $\overline{\divg}_f:  T^*M\times T^*M\longrightarrow C^\infty(M)$ on $T^*M\times T^*M$ with respect to $\overline{\nabla}$ is given by
\begin{align}
       \overline{\divg}_f(u,v)=\frac{1}{2}(\divg_f u+\divg_f v). \label{div0}
\end{align}
\end{itemize}
In the following, we denote $\overline{\divg}_f^*$ to be the formal adjoint of the $f$-twisted divergence operator $\overline{\divg}_f$ with respect to $f$-twisted $L^2$ inner product (\ref{6}). 

\end{defn}

\begin{remark}
    Due to (\ref{MC}), it is not hard to see that 
\begin{align*}
    (\overline{\divg}_f\gamma)_l=(\overline{\nabla}^m\gamma_{ml}-\nabla_mf \gamma_{ml},\overline{\nabla}^m\gamma_{lm}-\nabla_mf \gamma_{lm})=\Big((\nabla^+)^m\gamma_{ml}-\nabla_mf \gamma_{ml},(\nabla^-)^m\gamma_{lm}-\nabla_mf \gamma_{lm}\Big).
\end{align*}
\end{remark}

\begin{defn}
The \emph{Laplace operator of the mixed Bismut connection} $\overline{\triangle}_f$ is defined by 
\begin{align}
    \overline{\triangle}_f=-\overline{\nabla}^{*_f}\overline{\nabla} \label{triangle}
\end{align}
where $\overline{\nabla}^{*_f}$ is the formal adjoint of $\overline{\nabla}$ with respect to $f$-twisted $L^2$ inner product (\ref{6}).    
\end{defn}

The following two lemmas provide us a detail information about $\overline{\divg}_f^*$ and $\overline{\triangle}_f$.

\begin{lemma}\label{div*}
Given any $(u,v)\in T^*M\times T^*M$,
\begin{align*}
    \overline{\divg}_f^*(u,v)_{ij}=-(\nabla^+)^iu_j-(\nabla^-)^jv_i.
\end{align*}
\end{lemma}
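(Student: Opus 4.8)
The plan is to compute $\overline{\divg}_f^*$ directly from the defining adjointness relation $(\overline{\divg}_f\gamma,(u,v))_f=(\gamma,\overline{\divg}_f^*(u,v))_f$ for all $\gamma\in\otimes^2T^*M$ and $(u,v)\in T^*M\times T^*M$, by integrating by parts. Using the formula from the Remark, $(\overline{\divg}_f\gamma)_l=\big((\nabla^+)^m\gamma_{ml}-\nabla_mf\,\gamma_{ml},\ (\nabla^-)^m\gamma_{lm}-\nabla_mf\,\gamma_{lm}\big)$, and the inner product (\ref{6}) on $T^*M\times T^*M$ which pairs the first slots together and the second slots together, I would write
\begin{align*}
    (\overline{\divg}_f\gamma,(u,v))_f=\int_M\Big(\big((\nabla^+)^m\gamma_{ml}-\nabla_mf\,\gamma_{ml}\big)u^l+\big((\nabla^-)^m\gamma_{lm}-\nabla_mf\,\gamma_{lm}\big)v^l\Big)e^{-f}dV_g.
\end{align*}
The first step is to establish the $f$-twisted integration-by-parts rule for the Bismut connections $\nabla^{\pm}$: since $\nabla^{\pm}$ are metric-compatible, for any tensors $S,T$ one has $\int_M \langle(\nabla^{\pm})_m S, T\rangle e^{-f}dV_g = -\int_M \langle S, (\nabla^{\pm})_m T\rangle e^{-f}dV_g + (\text{torsion terms})+ \int_M \langle S,T\rangle \nabla_m f\, e^{-f}dV_g$, where the torsion contribution comes from $\divg(\text{vector field})$ differing from $\nabla_m(\cdot)$ by trace-of-torsion terms. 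The key observation is that the torsion of $\nabla^{\pm}$ is $\pm H$, which is totally skew, so its trace vanishes; hence $\divg_g$ of the relevant one-form equals $(\nabla^{\pm})^m$ of its components with no correction, and the $e^{-f}$ weight produces exactly the $\nabla_mf$ term that cancels the $-\nabla_mf\,\gamma_{ml}$ pieces in $\overline{\divg}_f$.

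Carrying this out, the first summand becomes $-\int_M \gamma_{ml}(\nabla^+)^m u^l\, e^{-f}dV_g$ and the second becomes $-\int_M \gamma_{lm}(\nabla^-)^m v^l\, e^{-f}dV_g$; relabeling indices so that both pair against $\gamma_{ij}$ gives $-\int_M \gamma_{ij}\big((\nabla^+)^i u^j+(\nabla^-)^j v^i\big)e^{-f}dV_g$. Comparing with $(\gamma,\overline{\divg}_f^*(u,v))_f=\int_M \gamma_{ij}\,\overline{\divg}_f^*(u,v)_{ij}\,e^{-f}dV_g$ and using that $\gamma$ is arbitrary yields $\overline{\divg}_f^*(u,v)_{ij}=-(\nabla^+)^i u_j-(\nabla^-)^j v_i$, as claimed. (A small point: $\gamma$ ranges over all of $\otimes^2T^*M$, not just symmetric tensors, so no symmetrization of the right-hand side is needed; if one only wanted the $S^2M$ component one would symmetrize.)

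The main obstacle—really the only subtlety—is getting the $f$-twisted, torsion-twisted integration by parts exactly right, i.e. verifying that the skew-symmetry of $H$ makes the torsion corrections vanish and that the $e^{-f}$ factor contributes precisely the term needed to absorb the $\nabla f$ pieces in the definition of $\overline{\divg}_f$. Concretely, I would prove the identity $\int_M (\nabla^{\pm})^m W_m\, e^{-f}dV_g = \int_M W_m\nabla^m f\, e^{-f}dV_g$ for any one-form $W$ (using $\divg_g W = \nabla^m W_m = (\nabla^{\pm})^m W_m$ since $\tr H = 0$, then $\int_M \divg_g W\, e^{-f}dV_g = \int_M W(\nabla f)e^{-f}dV_g$ by the usual divergence theorem with weight), apply it with $W_m = \gamma_{ml}u^l$ and $W_m=\gamma_{lm}v^l$ after expanding $(\nabla^{\pm})^m(\gamma_{ml}u^l)$ by the Leibniz rule, and collect terms. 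Everything else is bookkeeping.
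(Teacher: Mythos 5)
Your proof is correct and follows essentially the same route as the paper: unwind the adjointness relation, use $(\overline{\divg}_f\gamma)_l=\big((\nabla^+)^m\gamma_{ml}-\nabla_mf\gamma_{ml},(\nabla^-)^m\gamma_{lm}-\nabla_mf\gamma_{lm}\big)$, and integrate by parts so that the $e^{-f}$ weight absorbs the $\nabla f$ terms. The only difference is that you spell out the justification (total skew-symmetry of $H$ killing the trace-of-torsion correction, so $(\nabla^{\pm})^mW_m=\nabla^mW_m$) that the paper leaves implicit.
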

\begin{proof}
From the definition, we compute
\begin{align*}
    \int_M (\overline{\divg}_f \gamma)_l(u_l,v_l)e^{-f}dV_g&=\int_M \Big((\overline{\nabla}^m\gamma_{ml}-\nabla_m f \gamma_{ml})u_l+(\overline{\nabla}^m\gamma_{lm}-\nabla_m f\gamma_{lm})v_l\Big) e^{-f}dV_g
    \\&=\int_M \Big(((\nabla^+)^m\gamma_{ml}-\nabla_m f \gamma_{ml})u_l+((\nabla^-)^m\gamma_{lm}-\nabla_m f\gamma_{lm})v_l\Big) e^{-f}dV_g
    \\&=\int_M-\Big((\nabla^+)^mu_l \gamma_{ml}+(\nabla^-)^mv_l\gamma_{lm} \Big) e^{-f}dV_g
    \\&=\int_M-\gamma_{ml}\Big((\nabla^+)^mu_l+(\nabla^-)^lv_m\Big)e^{-f}dV_g.
\end{align*}
\end{proof}

\begin{lemma}\label{L1}
Given any 2-tensor $\gamma\in \otimes^2T^*M$,
\begin{align}
\overline{\triangle}_f\gamma_{ij}=\triangle_f\gamma_{ij}-H_{mjk}\nabla_m\gamma_{ik}+H_{mik}\nabla_m\gamma_{kj}-\frac{1}{4}(H_{jl}^2\gamma_{il}+H_{il}^2\gamma_{lj})-\frac{1}{2}H_{mkj}H_{mli}\gamma_{lk}. \label{20}  
\end{align}  

\end{lemma}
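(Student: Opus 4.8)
The plan is to compute $\overline{\triangle}_f\gamma = -\overline{\nabla}^{*_f}\overline{\nabla}\gamma$ directly from the definition of the mixed Bismut connection in \eqref{MC}, reducing everything to the ordinary (Levi-Civita) connection and then collecting the torsion terms produced by the two copies of $H$ hidden in $\nabla^-$ on the first slot and $\nabla^+$ on the second slot. First I would write, for an orthonormal frame, $(\overline{\nabla}_m\gamma)_{ij} = \nabla_m\gamma_{ij} - \tfrac12 H_{mik}\gamma_{kj} + \tfrac12 H_{mjk}\gamma_{ik}$, which follows from \eqref{13} and \eqref{MC} since $\nabla^-$ contributes $-\tfrac12 H(X,\cdot,\cdot)$ on the $Y$-slot and $\nabla^+$ contributes $+\tfrac12 H(X,\cdot,\cdot)$ on the $Z$-slot (being careful about which index of $H$ is contracted and the sign, using antisymmetry of $H$). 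I would also record the $f$-twisted adjoint formula: for the ordinary connection, $\nabla^{*_f}T = -\nabla^m T_{m\cdots} + \nabla_m f\, T_{m\cdots}$, so that $-\nabla^{*_f}\nabla\gamma = \triangle_f\gamma$ in the notation of \eqref{6}, and the mixed-connection adjoint $\overline{\nabla}^{*_f}$ differs from $\nabla^{*_f}$ by exactly the same algebraic $H$-terms (with transposed sign) that distinguish $\overline{\nabla}$ from $\nabla$, since those terms are tensorial (zeroth order) and self-adjointness up to sign is an algebraic computation using the skew-symmetry of $H$ in all three indices.

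Next I would assemble $\overline{\triangle}_f\gamma_{ij} = -\overline{\nabla}^{*_f}(\overline{\nabla}\gamma)_{ij}$ by substituting the two formulas. This produces four types of contributions: (i) the pure second-order term $-\nabla^{*_f}\nabla\gamma = \triangle_f\gamma$; (ii) cross terms where one derivative hits $\gamma$ and the algebraic $H$-term is on the other side — these give the first-order terms $-H_{mjk}\nabla_m\gamma_{ik} + H_{mik}\nabla_m\gamma_{kj}$ after using $dH=0$ is \emph{not} needed here, only skew-symmetry of $H$ and reindexing (the two cross-term pairings each appear twice with matching sign); (iii) terms where a derivative lands on $H$ itself, i.e.\ $\nabla_m H_{mik}$ and $\nabla_m H_{mjk}$ contractions — since on a steady gradient generalized Ricci soliton we are not assuming these vanish, I would keep the statement general, but in fact these contractions combine into the $d^*H$-type pieces; however, inspection of the claimed formula \eqref{20} shows no explicit $\nabla H$ term, so I expect these to cancel against part of (ii) by the Bianchi-type identity or, more likely, the statement of \Cref{L1} is being used in the Bismut-flat or soliton setting where the relevant combination is absorbed — I would double-check whether the intended hypothesis silently includes $\nabla$-divergence-free $H$ or whether an integration-by-parts reorganization removes it; (iv) the purely algebraic zeroth-order terms, quadratic in $H$, namely the $H^2$ Weitzenböck curvature pieces $-\tfrac14(H^2_{jl}\gamma_{il} + H^2_{il}\gamma_{lj})$ coming from composing the first-slot $H$-term with itself and the second-slot $H$-term with itself, plus the genuinely mixed term $-\tfrac12 H_{mkj}H_{mli}\gamma_{lk}$ coming from one first-slot and one second-slot $H$-insertion.

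The main obstacle is the careful bookkeeping in step (iv) and the sign in step (ii): one must track exactly six quadratic-in-$H$ monomials (two from squaring the left insertion, two from squaring the right insertion, two mixed) and show that the two "squared" pairs each collapse to the $H^2$ notation $H^2_{il}=\sum_{m,k}H_{mik}H_{mkl}$ defined after \Cref{P3}, while the two mixed monomials coincide after relabeling to give the single coefficient $-\tfrac12$. This is where skew-symmetry $H_{mik}=-H_{imk}=\dots$ must be invoked repeatedly and a wrong transposition would flip a sign. I would organize it by writing $\overline{\nabla} = \nabla + A$ where $A$ is the bundle endomorphism-valued one-form $A_m(\gamma)_{ij} = -\tfrac12 H_{mik}\gamma_{kj} + \tfrac12 H_{mjk}\gamma_{ik}$, so that $\overline{\nabla}^{*_f}\overline{\nabla} = \nabla^{*_f}\nabla + \nabla^{*_f}A + A^{*_f}\nabla + A^{*_f}A$; the middle two terms give (ii)+(iii) and, crucially, $A^{*_f} = -A$ as a pointwise operator on 2-tensors precisely by skew-symmetry of $H$, which makes $\nabla^{*_f}A + A^{*_f}\nabla = \nabla^{*_f}A - A\nabla$ reduce cleanly (the $\nabla_m f$ pieces in the twisted adjoint cancel against each other since $A$ contracts $H$ on the $m$-index and $H$ is skew), leaving the first-order terms of \eqref{20} and confirming no surviving $\nabla H$ term; then $A^{*_f}A = -A^2$ is the pure algebra of step (iv). Once $A^{*_f}=-A$ is verified, the remainder is a finite, purely algebraic contraction identity that I would carry out in a local orthonormal frame.
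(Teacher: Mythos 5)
Your strategy --- writing $\overline{\nabla}=\nabla+A$ with $A$ the algebraic $H$-insertion, expanding $\overline{\nabla}^{*_f}\overline{\nabla}=\nabla^{*_f}\nabla+\nabla^{*_f}A+A^{*}\nabla+A^{*}A$, and exploiting skew-adjointness of each $A_m$ --- is a legitimate route and genuinely different from the paper's. The paper never works at the operator level: it expands the Dirichlet energy $\int_M|\overline{\nabla}\gamma|^2e^{-f}dV_g$, relabels indices so every term is paired against $\gamma_{ij}$, and reads the operator off the quadratic form. Your route is in one respect sharper, and the worry you raise in step (iii) is exactly on target: at the operator level the cross terms $\nabla^{*_f}A+A^{*}\nabla$ leave behind the zeroth-order remainder $\tfrac12(d_f^*H)_{jk}\gamma_{ik}-\tfrac12(d_f^*H)_{ik}\gamma_{kj}$ (the $\nabla^mH_{m\cdot\cdot}$ pieces and the $\nabla^mf\,H_{m\cdot\cdot}$ pieces do not cancel against each other, as you suggest at the end --- they combine into $d_f^*H=d^*H+i_{\nabla f}H$). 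These terms are invisible to the paper's quadratic-form computation because they are antisymmetric in an index pair in which $\gamma\otimes\gamma$ is symmetric; but a quadratic form only determines the self-adjoint part of an operator, and the first-order operator appearing in \eqref{20} is self-adjoint with respect to \eqref{6} precisely when $d_f^*H=0$. So \eqref{20}, read literally for arbitrary $(g,H,f)$, needs either this correction term or the hypothesis $d^*H+i_{\nabla f}H=0$ --- which does hold everywhere the lemma is applied (steady gradient solitons and Bismut-flat metrics). Your instinct to ``double-check whether the intended hypothesis silently includes divergence-free $H$'' is the correct resolution, not a cancellation.

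The one concrete error is the sign of $A$. In \eqref{MC} the torsion enters through $-\gamma(\nabla_X^-Y,Z)-\gamma(Y,\nabla_X^+Z)$, so by \eqref{13} in normal coordinates $\overline{\nabla}_m\gamma_{ij}=\nabla_m\gamma_{ij}+\tfrac12H_{mik}\gamma_{kj}-\tfrac12H_{mjk}\gamma_{ik}$; you dropped the outer minus signs and wrote $-\tfrac12H_{mik}\gamma_{kj}+\tfrac12H_{mjk}\gamma_{ik}$. Replacing $A$ by $-A$ leaves $A^{*}A$ unchanged but flips the cross terms, so carried through literally your computation would produce $+H_{mjk}\nabla_m\gamma_{ik}-H_{mik}\nabla_m\gamma_{kj}$ instead of the first-order terms of \eqref{20}. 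With that sign corrected, and the $d_f^*H$ caveat recorded, the rest of your plan (the skew-adjointness $A_m^{*}=-A_m$ and the six-monomial bookkeeping in step (iv)) goes through and reproduces the paper's zeroth-order terms.
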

\begin{proof}
For any 2-tensor $\gamma\in \otimes^2T^*M$, using normal coordinates we have
\begin{align*}    \overline{\nabla}_m\gamma_{ij}=\nabla_m\gamma_{ij}-\frac{1}{2}H_{mjk}\gamma_{ik}+\frac{1}{2}H_{mik}\gamma_{kj}. 
\end{align*}
Then,
\begin{align*}
    \int_M |&\overline{\nabla}\gamma|^2 e^{-f}dV_g
    \\&=\int_M (\nabla_m\gamma_{ij}-\frac{1}{2}H_{mjk}\gamma_{ik}+\frac{1}{2}H_{mik}\gamma_{kj})(\nabla_m\gamma_{ij}-\frac{1}{2}H_{mjl}\gamma_{il}+\frac{1}{2}H_{mil}\gamma_{lj})e^{-f}dV_g
    \\&= \int_M \Big(|\nabla\gamma|^2+\nabla_m\gamma_{ij}(-H_{mjk}\gamma_{ik}+H_{mik}\gamma_{kj})+\frac{1}{4}(-H_{mjk}\gamma_{ik}+H_{mik}\gamma_{kj})(-H_{mjl}\gamma_{il}+H_{mil}\gamma_{lj})\Big)e^{-f}dV_g
    \\&=\int_M \Big(-\triangle_f\gamma_{ij}+H_{mjk}\nabla_m\gamma_{ik}-H_{mik}\nabla_m\gamma_{kj}+\frac{1}{4}(H_{jl}^2\gamma_{il}+H_{il}^2\gamma_{lj})+\frac{1}{2}H_{mkj}H_{mli}\gamma_{lk} \Big)\gamma_{ij}e^{-f}dV_g. 
\end{align*} 
Therefore, the result follows.

\end{proof}

\begin{proposition}\label{RTS}
Let $\mathcal{G}$ be a generalized metric on an exact Courant algebroid $E$ and $f$ be $\Isom(\mathcal{G})$ invariant. The tangent space of the generalized slice on a generalized metric $\mathcal{G}$ is given by
\begin{align}
    T_{\mathcal{G}}S^f_\mathcal{G}=\{\gamma\in\otimes^2T^*M: \overline{\divg}_f\gamma=0 \}. \label{TS}
\end{align}  
In the following, we say that a 2-tensor $\gamma$ is non-trivial if $ \gamma\in T_{\mathcal{G}}S^f_\mathcal{G}$.
\end{proposition}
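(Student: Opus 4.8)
The plan is to deduce \eqref{TS} directly from the description of the slice provided by \Cref{T5}, which states that $\gamma = h-K \in T_{\mathcal G}S^f_{\mathcal G}$ precisely when $(\divg_f h)_l = \tfrac12 K_{ab}H_{lab}$ and $d_f^*K = 0$. Since $f$ is $\Isom(\mathcal G)$-invariant, \Cref{T5} applies, and it suffices to show that these two conditions are jointly equivalent to the single equation $\overline{\divg}_f\gamma = 0$. This is a local identity, so I would fix a point and work in normal coordinates there.

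First I would rewrite $\overline{\divg}_f$ using the Remark following \eqref{div0}, which gives $(\overline{\divg}_f\gamma)_l = \big((\nabla^+)^m\gamma_{ml} - \nabla_m f\,\gamma_{ml},\ (\nabla^-)^m\gamma_{lm} - \nabla_m f\,\gamma_{lm}\big)$; hence $\overline{\divg}_f\gamma = 0$ amounts to the vanishing of both slots. Expanding the Bismut connections via $(\nabla^\pm_m\gamma)_{ij} = \nabla_m\gamma_{ij} \mp \tfrac12 H_{mik}\gamma_{kj} \mp \tfrac12 H_{mjk}\gamma_{ik}$ (consistent with \eqref{13} and with the formula for $\overline{\nabla}$ used in \Cref{L1}) and tracing the first index $m$, the would-be terms $H_{mmk}$ drop out by skew-symmetry of $H$, leaving $(\nabla^+)^m\gamma_{ml} = \nabla^m\gamma_{ml} - \tfrac12 H_{mlk}\gamma_{mk}$ and $(\nabla^-)^m\gamma_{lm} = \nabla^m\gamma_{lm} + \tfrac12 H_{mlk}\gamma_{km}$. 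Thus $\overline{\divg}_f\gamma = 0$ is the pair
\[
\nabla^m\gamma_{ml} - \nabla_m f\,\gamma_{ml} - \tfrac12 H_{mlk}\gamma_{mk} = 0, \qquad \nabla^m\gamma_{lm} - \nabla_m f\,\gamma_{lm} + \tfrac12 H_{mlk}\gamma_{km} = 0.
\]

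Next I would substitute $\gamma = h-K$ with $h \in \Gamma(S^2M)$ and $K \in \Omega^2$. The cross contractions $H_{mlk}h_{mk}$ vanish because $H$ is skew and $h$ symmetric in the pair $(m,k)$, and after reordering the indices of $H$ against the skew tensor $K$ (using $H_{mlk}K_{mk} = -K_{ab}H_{lab}$) the two equations become
\[
(\divg_f h)_l - (\divg_f K)_l - \tfrac12 K_{ab}H_{lab} = 0, \qquad (\divg_f h)_l + (\divg_f K)_l - \tfrac12 K_{ab}H_{lab} = 0,
\]
where $(\divg_f K)_l := \nabla^m K_{ml} - \nabla_m f\,K_{ml}$. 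Adding these gives $(\divg_f h)_l = \tfrac12 K_{ab}H_{lab}$, and subtracting them gives $\divg_f K = 0$; since for a $2$-form one has $\divg_f K = -(d^*K + i_{\nabla f}K) = -d_f^*K$, the latter is exactly $d_f^*K = 0$. Running the same manipulations in reverse yields the converse, so $\overline{\divg}_f\gamma = 0$ if and only if $\gamma = h-K$ satisfies the two conditions of \Cref{T5}, which proves \eqref{TS}.

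The argument is purely computational and I do not anticipate a genuine obstacle; the only care needed is the index bookkeeping — the torsion sign conventions for $\nabla^\pm$ from \eqref{13}, the placement of the three indices of $H$ when contracted against the skew tensor $K$, and the sign identifying $\divg_f$ of a $2$-form with $-d_f^*$ — all of which must be kept consistent with the conventions fixed in the Notations subsection.
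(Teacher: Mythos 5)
Your proposal is correct and follows essentially the same route as the paper: both reduce \eqref{TS} to the two conditions of \Cref{T5} by expanding the traced Bismut connections, using the skew/symmetric cancellations $H_{mlk}h_{mk}=0$ and $H_{mlk}K_{mk}=-K_{ab}H_{lab}$, and identifying $\divg_f K=-d_f^*K$; the only cosmetic difference is that the paper organizes the computation through the intermediate operators $\divg_f^{\pm}$ and $d_f^{\pm,*}$ applied to $h$ and $K$ separately, whereas you expand $\nabla^{\pm}$ on $\gamma$ first and then split into symmetric and skew parts. Your sign bookkeeping checks out against the conventions in the Notations subsection.
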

\begin{proof}
 By (\ref{13}), we observe that 
\begin{align*}
    (\divg^{\pm}_fh)_l&=(\nabla^{\pm})^m h_{ml}-\nabla_mf h_{ml}=(\divg_f h)_l\mp\frac{1}{2}H_{mlk}h_{mk}=(\divg_f h)_l,
    \\(d_f^{\pm,*} K)_l&=-(\nabla^{\pm})^m K_{ml}+\nabla_mf K_{ml}=(d_f^*K)_l\mp\frac{1}{2}H_{mkl}K_{mk}.
\end{align*}
Then,
\begin{align*}
     (\nabla^+)^m\gamma_{ml}-\nabla_m f\gamma_{ml}&= (\divg^{+}_fh)_l+(d_f^{+,*} K)_l=(\divg_f h)_l+(d_f^*K)_l-\frac{1}{2}H_{mkl}K_{mk},
   \\ (\nabla^-)^m\gamma_{lm}-\nabla_m f\gamma_{lm}&= (\divg^{-}_fh)_l-(d_f^{-,*} K)_l=(\divg_f h)_l-(d_f^*K)_l-\frac{1}{2}H_{mkl}K_{mk}.
\end{align*}
Therefore,  our result follows by \Cref{T5}.

\end{proof}

\subsection{Generalized Ricci Solitons}
In this subsection, we discuss the generalized Ricci flow and generalized Ricci solitons. Most of the contents can be found in \cite{GRF} Section 4.

\begin{defn}\label{D8}
Let $E$ be an exact Courant algebroid over a smooth manifold $M$ and $H_0$ is a background closed 3-form. A one-parameter family of generalized metrics $\mathcal{G}_t=\mathcal{G}_t(g_t,b_t)$
is called a \emph{generalized Ricci flow} if 
\begin{align*}
   &\frac{\partial}{\partial t}g=-2\Rc+\frac{1}{2}H^2,\nonumber
   \\&\frac{\partial}{\partial t}b=-d^*H \quad \text{ where $H=H_0+db$.} 
\end{align*}
Equivalently, the generalized Ricci flow can also be expressed as
\begin{align*}
   &\frac{\partial}{\partial t}(g-b)=-2\Rc^+.
\end{align*}
where $\Rc^+$ denotes the Ricci curvature with respect to the Bismut connection $\nabla^+$.
\end{defn}

Motivated by the Ricci flow, we define the stationary points to be the steady generalized Ricci solitons. (For more details about motivations, readers can consult with \cite{GRF} and \cite{K}.)
\begin{defn}\label{D9}
Given a Riemannian metric $g$, a closed 3-form $H$, and a smooth function $f$ on a smooth manifold $M$. We say $(M,g,H,f)$ is a \emph{steady gradient generalized Ricci soliton} if 
\begin{align}
   0=\Rc-\frac{1}{4}H^2+\nabla^2 f, \quad 0=d_g^*H+i_{\nabla f}H\quad (\text{Equivalently, $\Rc^{H,f}=0$.}) \label{GRS}
\end{align}
and $(M,g,H)$ is called a \emph{generalized Einstein manifold} if
\begin{align*}
  0=\Rc-\frac{1}{4}H^2, \quad 0=d_g^*H.
\end{align*}
\end{defn}

\begin{example}
The most basic example of the steady generalized Ricci soliton is the work on $S^3$. Given a standard unit sphere metric $g_{S^3}$. By taking $H_{S^3}=2dV_{g_{S^3}}$, we get
\begin{align*}
    \Rc_{g_{S^3}}=\frac{1}{4}H^2_{S^3}, \quad d^*H_{S^3} =0,
\end{align*}
which implies that $(g_{S^3},H_{S^3} )$ is a generalized Einstein metric. Moreover, any 3-dimensional compact generalized Einstein manifold is a quotient of $S^3$. (See \cite{K}, Corollary 2.22 for more detail.)  
\end{example}

\begin{example}\label{E3}
 Suppose $G$ is a compact Lie group. By \cite{JM}, we know that $G$ possesses a bi-invariant metric $\langle\cdot ,\cdot\rangle$, and its corresponding connection, Riemann curvatures, sectional curvatures are given by
\begin{align*}
    \nabla_XY&=\frac{1}{2}[X,Y],\quad   R(X,Y)Z=-\frac{1}{4}[[X,Y],Z],
    \\ K(X,Y)&=\frac{1}{4}\langle [X,Y],[X,Y] \rangle \quad \text{where $X,Y,Z$ are left-invariant vector fields.}
\end{align*}
Define a 3-form $H$ by $g^{-1}H(X,Y)=[X,Y]$. By direct computation, we know that connections $\nabla^{\pm}$ are flat. Thus, any compact Lie group admits a Bismut-flat structure. 

On the other hand, a famous result of Cartan Schouter (See \cite{GRF} Theorem 3.54) shows that if $(M,g,H)$ is complete, simply connected and Bismut-flat then $(M, g)$ is isometric to a product of simple Lie groups with bi-invariant
metrics $g$, and $g^{-1}H(X, Y) = \pm[X, Y ]$ on left-invariant vector fields.

\end{example}

\subsection{Generalized Einstein--Hilbert functional}
In this subsection, we will see that generalized Ricci solitons are related to the generalized Einstein--Hilbert functional which are defined below.
\begin{defn}\label{D10}
Given a smooth manifold $M$ and a background closed 3-form $H_0$, the \emph{generalized Einstein--Hilbert functional} $\mathcal{F}: \Gamma(S^2M)\times \Omega^2 \times C^\infty (M)\to \mathbb{R}$ is given by 
\begin{align*}
    \mathcal{F}(g,b,f)=\int_M R^{H,f} e^{-f}dV_g=\int_M (R-\frac{1}{12}|H_0+db|^2+|\nabla f|^2)e^{-f}dV_g 
\end{align*}
where $R^{H,f}$ is the generalized scalar curvature given in (\ref{RC}) and $H=H_0+db$. Also, we define 
\begin{align*}
  \lambda(g,b)=\inf\Big\{\mathcal{F}(g,b,f)\big|\, f\in C^\infty(M),\,\int_M e^{-f}dV_g=1\Big \}.
\end{align*}

\end{defn}

Following the same argument in the Ricci flow case (see \cite{MR2302600} for more details), we can deduce that for any $(g,b)$, the minimizer $f$ is always achieved. Moreover, $\lambda$ satisfies that
\begin{align*}
    \lambda(g,b)=R-\frac{1}{12}|H_0+db|^2+2\triangle f-|\nabla f|^2 
\end{align*}
and it is the lowest eigenvalue of the Schrödinger operator $-4\triangle+R-\frac{1}{12}|H_0+db|^2.$ 

Let $\mathcal{G}_t(g_t, b_t)$ be a smooth family of generalized metrics on a smooth compact manifold $M$. Assume 
\begin{align*}
    \frac{\partial}{\partial t}\Big|_{t=0}(g_t-b_t) =\gamma=h-K, \quad  (g_0,b_0)=(g,b),
\end{align*}
The first variation formula of the generalized Einstein--Hilbert functional is given by
\begin{align}
    \nonumber\frac{d}{dt}\Big|_{t=0}\lambda(g_t,b_t)&=\int_M \Big[\langle -\Rc+\frac{1}{4}H^2-\nabla^2f,h \rangle-\frac{1}{2}\langle d^*H+i_{\nabla f}H,K\rangle \Big]e^{-f}dV_g
    \\&=\int_M -\langle \gamma, \Rc^{H,f} \rangle e^{-f}dV_g. \label{FV}
\end{align}
where $\Rc^{H,f}$ is the twisted Bakry-Emery curvature given in (\ref{BC}) and $\gamma=h-K$. Based on the first variation formula, we conclude that
\begin{corollary}\label{C2}
    The generalized metric $\mathcal{G}(g,b)$ is a critical point of $\lambda$ if and only if $(g,b,f)$ is a steady gradient generalized Ricci soliton with $f$ realizing the infinmum in the definition of $\lambda$.
\end{corollary}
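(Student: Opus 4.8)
The statement to prove is \Cref{C2}: $\mathcal{G}(g,b)$ is a critical point of $\lambda$ if and only if $(g,b,f)$ is a steady gradient generalized Ricci soliton with $f$ the minimizer.

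\medskip

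The plan is to read off the result directly from the first variation formula \eqref{FV}. Recall that for any smooth family $\mathcal{G}_t(g_t,b_t)$ with $\frac{\partial}{\partial t}\big|_{t=0}(g_t-b_t)=\gamma=h-K$ we have
\[
\frac{d}{dt}\Big|_{t=0}\lambda(g_t,b_t)=-\int_M \langle \gamma, \Rc^{H,f}\rangle\, e^{-f}dV_g,
\]
where $f$ is the (unique) minimizer of $\mathcal{F}(g,b,\cdot)$ subject to the normalization $\int_M e^{-f}dV_g=1$. One delicate point is that $f$ depends on $(g,b)$, so a priori one should worry about the term coming from differentiating $f$; the reason it does not appear is precisely that $f$ is a critical point of $f\mapsto \mathcal{F}(g,b,f)$ along the constraint, so $\frac{\partial}{\partial s}\mathcal{F}(g,b,f_s)=0$ — this is the standard envelope/first-variation argument used in the Ricci flow case (cf.\ \cite{MR2302600}), and I will invoke it rather than rederive it, since \eqref{FV} is already stated as given.

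\medskip

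For the ``if'' direction, suppose $(g,b,f)$ is a steady gradient generalized Ricci soliton with $f$ realizing the infimum. By \Cref{D9}, this means $\Rc^{H,f}=\Rc-\frac14 H^2+\nabla^2 f-\frac12(d^*H+i_{\nabla f}H)=0$ (using \eqref{BC} together with the soliton equations \eqref{GRS}; note that in \eqref{GRS} the convention has $\nabla^2 f$ rather than $\frac12\nabla^2 f$, and correspondingly $\Rc^{H,f}$ vanishes). Then \eqref{FV} gives $\frac{d}{dt}\big|_{t=0}\lambda(g_t,b_t)=0$ for \emph{every} variation $\gamma$, so $\mathcal{G}(g,b)$ is a critical point of $\lambda$.

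\medskip

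For the ``only if'' direction, suppose $\mathcal{G}(g,b)$ is a critical point of $\lambda$, and let $f$ be its minimizer. Then $\int_M \langle \gamma,\Rc^{H,f}\rangle e^{-f}dV_g=0$ for all $\gamma=h-K\in \Gamma(S^2M)\times\Omega^2$. Decomposing $\Rc^{H,f}$ into its symmetric part $\Rc-\frac14 H^2+\nabla^2 f$ and its skew part $-\frac12(d^*H+i_{\nabla f}H)$, and choosing $h$ arbitrary with $K=0$, then $K$ arbitrary with $h=0$, the nondegeneracy of the pointwise inner product $\langle\cdot,\cdot\rangle_g$ together with the positivity of the weight $e^{-f}$ forces both components to vanish identically: $\Rc-\frac14 H^2+\nabla^2 f=0$ and $d^*H+i_{\nabla f}H=0$. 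These are exactly the equations \eqref{GRS}, so $(g,b,f)$ is a steady gradient generalized Ricci soliton. The only real subtlety is the one flagged above — justifying that the variation of the minimizer $f$ contributes nothing to $\frac{d}{dt}\lambda$ — but this is precisely what makes \eqref{FV} a valid identity and has already been incorporated there, so nothing further is needed.
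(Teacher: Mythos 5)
Your proposal is correct and follows essentially the same route as the paper, which derives \Cref{C2} directly from the first variation formula (\ref{FV}) by pairing against arbitrary symmetric and skew-symmetric variations and noting that the dependence of the minimizer $f$ on $(g,b)$ contributes nothing at a critical point of the inner infimum. The convention discrepancy you flag between the factor on $\nabla^2 f$ in (\ref{BC}) versus (\ref{GRS}) is a real (harmless) inconsistency in the paper, and your handling of it is appropriate.
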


\begin{remark}\label{R2}
Due to \Cref{C2}, we say that a generalized metric $\mathcal{G}(g,b)$ is a steady gradient generalized Ricci soliton if $(g,b,f)$ satisfies the equation (\ref{GRS}) where $f$ is the minimizer of $\lambda$.
\end{remark}

\section{Linear Stability}
\subsection{Analytic Properties of Generalized Einstein--Hilbert Functional}
Given a compact steady gradient generalized Ricci soliton $\mathcal{G})$. The main goal of this section is to compute the second variation formula. First, we recall that in \cite{K} we have an analyticity property.

\begin{proposition}[\cite{K} Proposition 5.5]\label{P4}
Let $\mathcal{G}(g_0,b_0)$ be a compact steady gradient generalized Ricci soliton. There exists a $C^{2,\alpha}$-neighborhood $\mathcal{U}$ of $(g_0,b_0)$ such that the minimizers $f_{(g,b)}$ depends analytically on $(g,b)$ and $\lambda(g,b)$ is an analytic function in $\mathcal{U}$.
\end{proposition}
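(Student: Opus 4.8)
Proof proposal for Proposition \ref{P4} (analyticity of minimizers and of $\lambda$):

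The plan is to reduce the statement to an application of the analytic implicit function theorem on Banach spaces (in the real-analytic category), exactly as in the classical case of Perelman's $\lambda$-functional (cf.\ \cite{MR2302600}), with the extra $b$-dependence entering only through the smooth (indeed polynomial) way $H=H_0+db$ sits inside the zeroth-order potential of the Schrödinger operator. First I would recall that for each $(g,b)$ the minimizer $f=f_{(g,b)}$ is characterized by the Euler--Lagrange equation together with the normalization constraint; writing $f = -2\log\phi$ with $\phi>0$, $\int_M \phi^2\,dV_g=1$, the minimizer corresponds to the lowest eigenfunction of the Schrödinger operator
\begin{align*}
    \mathcal{L}_{(g,b)}\phi = -4\triangle_g\phi + \Big(R_g - \tfrac{1}{12}|H_0+db|_g^2\Big)\phi = \lambda(g,b)\,\phi,
\end{align*}
normalized with $\phi>0$ and $\|\phi\|_{L^2(dV_g)}=1$. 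The lowest eigenvalue of $\mathcal{L}_{(g,b)}$ is simple (Perron--Frobenius / the strong maximum principle for the ground state), so it is an isolated point of the spectrum, which is the hypothesis that makes analytic perturbation theory applicable.

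Next I would set up the map $\Phi\colon \mathcal{U} \times \big(C^{2,\alpha}(M)\times\mathbb{R}\big) \to C^{0,\alpha}(M)\times\mathbb{R}$ defined by
\begin{align*}
    \Phi\big((g,b),(\phi,\mu)\big) = \Big(\mathcal{L}_{(g,b)}\phi - \mu\phi,\ \|\phi\|^2_{L^2(dV_g)}-1\Big),
\end{align*}
and verify two things: first, that $\Phi$ is real-analytic in all arguments. This is where the key structural input enters: the coefficients of $\mathcal{L}_{(g,b)}$ — namely $g^{ij}$, the Christoffel symbols, $R_g$, $dV_g$, and $|H_0+db|_g^2$ — are all real-analytic functions of the $2$-jet of $(g,b)$ (rational in $g$ with real-analytic, in fact polynomial, dependence on $db$), so composition gives joint real-analyticity of $\Phi$ as a map between the stated Banach spaces; the $L^2$-constraint is visibly analytic since it is quadratic in $\phi$ with $dV_g$ analytic in $g$. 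Second, I would check that the partial differential $D_{(\phi,\mu)}\Phi$ at the soliton base point $\big((g_0,b_0),(\phi_0,\lambda_0)\big)$ is a Banach-space isomorphism: this linearized operator sends $(\psi,\nu)$ to $\big((\mathcal{L}_{(g_0,b_0)}-\lambda_0)\psi - \nu\phi_0,\ 2\int_M \phi_0\psi\,dV_{g_0}\big)$, and since $\lambda_0$ is a simple eigenvalue with eigenspace $\mathbb{R}\phi_0$ and $\phi_0\notin \operatorname{im}(\mathcal{L}_{(g_0,b_0)}-\lambda_0)$, a standard Fredholm-alternative argument (the operator $\mathcal{L}_{(g_0,b_0)}-\lambda_0$ is self-adjoint, elliptic, index zero, with one-dimensional kernel spanned by $\phi_0$) shows this block operator is bijective with bounded inverse. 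Elliptic Schauder estimates give the needed regularity on $C^{k,\alpha}$ scales.

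The analytic implicit function theorem then yields, on a possibly smaller $C^{2,\alpha}$-neighborhood $\mathcal{U}$ of $(g_0,b_0)$, a real-analytic map $(g,b)\mapsto (\phi_{(g,b)},\mu_{(g,b)})$ solving $\Phi=0$; by uniqueness of the positive normalized ground state this is the minimizing pair, so $f_{(g,b)}=-2\log\phi_{(g,b)}$ depends analytically on $(g,b)$ and $\lambda(g,b)=\mu_{(g,b)}$ is real-analytic on $\mathcal{U}$. The main obstacle — really the only non-formal point — is confirming the joint real-analyticity of $\Phi$ between the chosen Banach spaces and the invertibility of the linearization; the former requires being a little careful that inversion of $g$ and the square-root-free expression $|H_0+db|_g^2 = g^{ia}g^{jb}g^{kc}(H_0+db)_{ijk}(H_0+db)_{abc}$ land in $C^{0,\alpha}$ real-analytically in the $C^{2,\alpha}$-topology on $(g,b)$, and the latter is the simplicity of the bottom eigenvalue, which is standard but should be invoked explicitly. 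Everything else is the routine machinery already used in the Ricci-flow setting, now carried along with the harmless extra parameter $b$.
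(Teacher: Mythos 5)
Your proposal is correct and follows essentially the same route the paper indicates for this result: the paper cites \cite{K} Proposition 5.5 and explicitly notes that the proof "is based on the implicit function theorem," applied exactly as in the Ricci flow case of \cite{MR2302600}, i.e.\ via the substitution $f=-2\log\phi$, the simplicity of the ground state of the Schr\"odinger operator $-4\triangle+R-\tfrac{1}{12}|H_0+db|^2$, and the analytic implicit function theorem on H\"older spaces. Your identification of the two non-formal points (joint real-analyticity of the coefficients in the $C^{2,\alpha}$-topology and invertibility of the linearization via simplicity of the lowest eigenvalue) is exactly right.
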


The proof of \Cref{P4} is based on the implicit function theorem. Since compact steady gradient generalized Ricci solitons are critical points of $\lambda$-functional, we have the following results.
\begin{lemma}[\cite{K}, Lemma 3.6]\label{L2}
Given a compact steady gradient generalized Ricci soliton $\mathcal{G}(g,b)$. Suppose $\mathcal{G}_t(g_t,b_t)$ is a one-parameter family of generalized metrics and $f_t$ is the minimizer of $\lambda(g_t,b_t)$ such that 
\begin{align*}
    &\frac{\partial}{\partial t}\Big|_{t=0}(g_t-b_t) =\gamma=h-K, \quad  (g_0,b_0)=(g,b),
    \\& \frac{\partial}{\partial t}\Big|_{t=0}f_t =\phi, \quad  f_0=f.
\end{align*}
 We have
\begin{align*}
    \triangle_f(\tr_g h-2\phi)=\divg_f \divg_fh-\frac{1}{6}\langle dK,H \rangle. 
\end{align*}
\end{lemma}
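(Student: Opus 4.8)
The plan is to differentiate in $t$ the Euler--Lagrange equation satisfied by the minimizer $f_t$, and to recognize the desired identity inside the resulting first-order expression. As recorded right after \Cref{D10}, for each $t$ the minimizer $f_t$ of $\mathcal{F}(g_t,b_t,\cdot)$ under the constraint $\int_M e^{-f_t}\,dV_{g_t}=1$ satisfies, pointwise on $M$,
\begin{align*}
    R_{g_t}-\tfrac{1}{12}|H_t|^2_{g_t}+2\triangle_{g_t}f_t-|\nabla_{g_t}f_t|^2_{g_t}=\lambda(g_t,b_t),\qquad H_t=H_0+db_t,
\end{align*}
where the right-hand side is a \emph{constant} in the space variable (the lowest eigenvalue of the associated Schrödinger operator). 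First I would differentiate this at $t=0$. The derivative of the right-hand side is $\tfrac{d}{dt}\big|_{t=0}\lambda(g_t,b_t)$, which is well defined by \Cref{P4} and equals $-\int_M\langle\gamma,\Rc^{H,f}\rangle\,e^{-f}\,dV_g=0$ by the first variation formula \eqref{FV}, since $\Rc^{H,f}=0$ for the steady gradient generalized Ricci soliton $\mathcal{G}(g,b)$. So the task reduces to showing that the $t$-derivative of the left-hand side equals $-\triangle_f(\tr_g h-2\phi)+\divg_f\divg_f h-\tfrac16\langle dK,H\rangle$.

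For that I would assemble the standard first variation formulas, writing $\dot g=h$, $\dot b=K$ (so $\dot H=dK$) and $\dot f=\phi$. The ingredients are: the variation $\dot R=-\triangle(\tr_g h)+\divg\divg h-\langle\Rc,h\rangle$ of scalar curvature; the variation of the scalar Laplacian acting on the frozen function $f$,
\begin{align*}
    \tfrac{d}{dt}\Big|_{t=0}\triangle_{g_t}f=-\langle h,\nabla^2 f\rangle-\langle\divg h,\nabla f\rangle+\tfrac12\langle\nabla(\tr_g h),\nabla f\rangle,
\end{align*}
which comes from $\dot g^{ij}=-h^{ij}$ and from $\dot\Gamma^k_{ij}=\tfrac12 g^{kl}(\nabla_i h_{jl}+\nabla_j h_{il}-\nabla_l h_{ij})$; the elementary variations $\tfrac{d}{dt}\big|_{t=0}|\nabla_{g_t}f_t|^2=-h(\nabla f,\nabla f)+2\langle\nabla\phi,\nabla f\rangle$ and $\tfrac{d}{dt}\big|_{t=0}|H_t|^2=-3\langle h,H^2\rangle+2\langle dK,H\rangle$, with $H^2$ as in \Cref{P3}; and the term $2\triangle_g\phi$ that $\dot f$ contributes to $2\triangle_{g_t}f_t$.

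Summing all of this, the soliton equation in the form $\Rc-\tfrac14 H^2=-\nabla^2 f$ turns the combination $-\langle\Rc,h\rangle+\tfrac14\langle H^2,h\rangle$ (from $\dot R$ and from the variation of $|H|^2$) into $\langle\nabla^2 f,h\rangle$, which partially cancels the $-2\langle h,\nabla^2 f\rangle$ produced by $2\cdot\tfrac{d}{dt}\triangle_{g_t}f$. Unwinding the definitions of $\divg_f$ on $1$-forms, $\divg_f\omega=\nabla_i\omega_i-\nabla_i f\,\omega_i$, and on symmetric $2$-tensors, $(\divg_f h)_i=\nabla_j h_{ji}-\nabla_j f\,h_{ji}$, together with $\triangle_f=\triangle-\nabla f\cdot\nabla$, one verifies
\begin{align*}
    \divg_f\divg_f h&=\divg\divg h-\langle\nabla^2 f,h\rangle-2\langle\nabla f,\divg h\rangle+h(\nabla f,\nabla f),\\
    \triangle_f(\tr_g h-2\phi)&=\triangle(\tr_g h)-2\triangle\phi-\langle\nabla f,\nabla(\tr_g h)\rangle+2\langle\nabla f,\nabla\phi\rangle,
\end{align*}
and a term-by-term comparison then identifies the accumulated first-order expression with $-\triangle_f(\tr_g h-2\phi)+\divg_f\divg_f h-\tfrac16\langle dK,H\rangle$. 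Setting it equal to $\tfrac{d}{dt}\big|_{t=0}\lambda=0$ yields the identity of the lemma.

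I expect the main difficulty to be purely organizational: the bookkeeping of the first-order terms involving $\nabla f$. Since $f$ is not constant, the variation of the Laplace operator genuinely contributes the gradient terms $-\langle\divg h,\nabla f\rangle$ and $\tfrac12\langle\nabla(\tr_g h),\nabla f\rangle$, and these must be matched precisely against the gradient terms concealed inside $\divg_f\divg_f h$ and $\triangle_f(\tr_g h-2\phi)$, so that every sign and every factor of $2$ lines up; the curvature and $|H|^2$ contributions are routine once the soliton equation is applied.
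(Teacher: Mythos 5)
Your proof is correct: differentiating the pointwise Euler--Lagrange identity $R^{H_t,f_t}=\lambda(g_t,b_t)$ at $t=0$, using $\frac{d}{dt}\big|_{t=0}\lambda=0$ (valid since $\Rc^{H,f}=0$ at the soliton), substituting the soliton equation $\Rc-\tfrac14H^2=-\nabla^2f$, and matching the $\nabla f$ terms against $\divg_f\divg_f h$ and $\triangle_f(\tr_g h-2\phi)$ does yield the stated identity, and I have checked that the term-by-term cancellation you sketch closes up exactly. The paper itself defers the proof to \cite{K}, Lemma 3.6, and your argument is the standard derivation one would expect there, so this is essentially the same approach.
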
  

\begin{remark}
Recall in the proof of \Cref{RTS},  we have
\begin{align*}
    (\overline{\divg}_f\gamma)_l=\Big((\divg_f h)_l+(d_f^*K)_l-\frac{1}{2}H_{mkl}K_{mk},(\divg_f h)_l-(d_f^*K)_l-\frac{1}{2}H_{mkl}K_{mk}\Big).
\end{align*}
Thus,
\begin{align*}
    \overline{\divg}_f\overline{\divg}_f\gamma=\divg_f \divg_fh-\frac{1}{6}\langle dK,H \rangle. 
\end{align*}
Then, the variation $\phi$ can be viewed as a function of $\gamma$ which satisfies
\begin{align}
    \triangle_f(\tr_g \gamma-2\phi)=\overline{\divg}_f\overline{\divg}_f\gamma\quad \text{and} \quad \int_M (\tr_g \gamma-2\phi)e^{-f}dV_g=0. \label{Df}
\end{align}
In particular, if $\gamma \in  T_{\mathcal{G}}S^f_\mathcal{G}$, by (\ref{TS}) we have
\begin{align*}
    \tr_g\gamma=2\phi.
\end{align*}
\end{remark}

\subsection{Variation of Generalized Metrics}

In this subsection, we compute some variation formulas. 

\begin{lemma}\label{L3}
Given a compact steady gradient generalized Ricci soliton $\mathcal{G}(g,b)$. Suppose $\mathcal{G}(g_t,b_t)$ is a one-parameter family of generalized metrics and $f_t$ is the minimizer of $\lambda(g_t,b_t)$ such that 
\begin{align*}
    &\frac{\partial}{\partial t}\Big|_{t=0}(g_t-b_t) =\gamma=h-K, \quad  (g_0,b_0)=(g,b),
    \\& \frac{\partial}{\partial t}\Big|_{t=0}f_t =\phi, \quad  f_0=f.
\end{align*}
We have
\begin{align*}
    \nonumber&\kern-1em \frac{\partial}{\partial t}\Big|_{t=0} \Big(R_{ij}-\frac{1}{4} H^2_{ij}+\nabla_i\nabla_j f\Big)
    \\&= -\frac{1}{2}\triangle_f h_{ij}-(\mathring{R}^+ h)_{ij}-(\divg_f^* \divg_fh)_{ij}-\nabla_i\nabla_j(\frac{\tr_g h}{2}-\phi)+\frac{1}{8}(h_{jk}H^2_{ik}+h_{ik}H^2_{jk})+\frac{1}{4}h_{ac}H_{iab}H_{jcb}
    \\& \kern2em +\frac{1}{2}\nabla_lH_{ijk}h_{lk}-\frac{1}{4}\Big((dK)_{iab}H_{jab}+H_{iab}(dK)_{jab}\Big).
\end{align*}
In particular, if $\gamma=h-K \in  T_{\mathcal{G}}S^f_\mathcal{G}$, we have
\begin{align*}
    \nonumber&\kern-1em \frac{\partial}{\partial t}\Big|_{t=0} \Big(R_{ij}-\frac{1}{4} H^2_{ij}+\nabla_i\nabla_j f\Big)
    \\&= -\frac{1}{2}\triangle_f h_{ij}-(\mathring{R}^+ h)_{ij}+\frac{1}{8}(h_{jk}H^2_{ik}+h_{ik}H^2_{jk})+\frac{1}{4}h_{ac}H_{iab}H_{jcb}+\frac{1}{2}\nabla_l H_{ijk}h_{lk}
    \\& \kern2em-\frac{1}{2}\Big(\nabla_aK_{bi}H_{jab}+\nabla_aK_{bj}H_{iab}\Big)+\frac{1}{4}K_{ab}(\nabla_i H_{jab}+\nabla_j H_{iab}).
\end{align*}
\end{lemma}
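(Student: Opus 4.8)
\textbf{Proof proposal for Lemma \ref{L3}.}

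The plan is to compute the linearization of the tensor $R_{ij}-\tfrac14 H^2_{ij}+\nabla_i\nabla_j f$ term by term, evaluating at the soliton so that $\Rc^{H,f}=0$ and $\nabla H=0$-type identities can be exploited to collapse lower-order terms, and then to rewrite the Levi-Civita curvature contribution in terms of the Bismut curvature $\mathring R^+$ using Proposition \ref{P3}. First I would recall the classical first-variation formulas: if $\tfrac{\partial}{\partial t}g_t = h$ at $t=0$, then $\tfrac{\partial}{\partial t}R_{ij} = -\tfrac12\triangle_L h_{ij} - (\divg^*\divg h)_{ij} - \tfrac12\nabla_i\nabla_j\tr_g h + (\text{terms with }\Rc)$, where $\triangle_L$ is the Lichnerowicz Laplacian; and $\tfrac{\partial}{\partial t}(\nabla_i\nabla_j f) = \nabla_i\nabla_j\phi - (\delta\Gamma)^k_{ij}\nabla_k f$, with $\delta\Gamma$ the linearized Christoffel symbols. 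Assembling these, the $\nabla f$-contraction from $\delta\Gamma$ combines with the weights to turn $\triangle_L$ and $\divg^*\divg$ into their $f$-twisted analogues $\triangle_f$ and $\divg_f^*\divg_f$, and to produce the term $-\nabla_i\nabla_j(\tfrac12\tr_g h-\phi)$; this is the standard Bakry–Émery bookkeeping already present in the Ricci soliton literature, so I would cite it rather than redo it.

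Next I would handle the genuinely new piece, the variation of $-\tfrac14 H^2_{ij}$, where $H=H_0+db_t$ so that $\tfrac{\partial}{\partial t}H = d\dot b = dK$ at $t=0$ (sign conventions from $\gamma = h-K$). Since $H^2_{ij} = g^{ab}g^{cd}H_{iac}H_{jbd}$, its variation has two sources: the variation of the inverse metrics, which is $-h^{ab}$ in each slot and yields the algebraic terms $\tfrac18(h_{jk}H^2_{ik}+h_{ik}H^2_{jk})$ together with $\tfrac14 h_{ac}H_{iab}H_{jcb}$ after relabeling; and the variation of $H$ itself, which gives $-\tfrac14\big((dK)_{iab}H_{jab}+H_{iab}(dK)_{jab}\big)$. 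Then I would isolate the curvature term: Proposition \ref{P3} gives $Rm^+ = Rm + \tfrac12\nabla_X H - \tfrac12\nabla_Y H - \tfrac14\langle H,H\rangle + \cdots$, so $\mathring R^+(h)_{ij} = \mathring R(h)_{ij} + (\text{a }\nabla H\text{ term}) + (\text{quadratic-in-}H\text{ terms})$; the $\nabla H$ piece contributes $\tfrac12\nabla_l H_{ijk}h_{lk}$ and the $H$-quadratic pieces must be checked to cancel against, or combine with, the leftover $H^2$-type terms from the Lichnerowicz curvature operator $\mathring R(h)$ and from the variation of $H^2_{ij}$. This matching — verifying that exactly the stated residual terms survive when $\mathring R$ is traded for $\mathring R^+$ — is where I expect the real bookkeeping effort to lie.

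Finally, for the second (specialized) formula I would simply substitute the slice constraints from Proposition \ref{RTS}: $\gamma\in T_{\mathcal G}S^f_{\mathcal G}$ means $\overline{\divg}_f\gamma = 0$, which by the remark preceding this lemma forces $\tr_g h = 2\phi$, killing the $\nabla_i\nabla_j(\tfrac12\tr_g h-\phi)$ term, and the $\divg_f$-type constraints $(\divg_f h)_l = \tfrac12 K_{ab}H_{lab}$, $d_f^* K = 0$ allow one to rewrite $\divg_f^*\divg_f h$ purely in terms of $K$ and $H$. Expanding $\divg_f^*\divg_f h = -\tfrac12(\nabla_i(\divg_f h)_j + \nabla_j(\divg_f h)_i) = -\tfrac14(\nabla_i(K_{ab}H_{jab}) + \nabla_j(K_{ab}H_{iab}))$ and distributing the derivative across the product yields $-\tfrac12(\nabla_a K_{bi}H_{jab} + \nabla_a K_{bj}H_{iab}) + \tfrac14 K_{ab}(\nabla_i H_{jab} + \nabla_j H_{iab})$ after using the first Bianchi-type identity $\nabla_i K_{ab} + \nabla_a K_{bi} + \nabla_b K_{ia} = (dK)_{iab}$ to reorganize terms and combining with the $(dK)$ terms already present; the $(dK)$ contributions then recombine with the $-\tfrac14((dK)_{iab}H_{jab}+\cdots)$ from the $H^2$ variation. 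The main obstacle throughout is sign- and index-discipline in these contractions — in particular getting the $\overline{\nabla}$/Bismut conversions consistent — rather than any conceptual difficulty; I would organize the computation in normal coordinates at a point with $\nabla f$ handled via the weighted measure $e^{-f}dV_g$ to minimize error.
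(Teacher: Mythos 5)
Your plan is essentially the paper's proof: the paper simply cites \cite{K} Lemma 3.5 for the general first-variation formula and then performs exactly the two steps you describe, namely trading $\mathring{R}$ for $\mathring{R}^+$ via \Cref{P3} (which produces the $\tfrac{1}{2}\nabla_lH_{ijk}h_{lk}$ and quadratic-in-$H$ corrections) and, for the second formula, substituting $\tr_gh=2\phi$ and $(\divg_fh)_l=\tfrac{1}{2}K_{ab}H_{lab}$ into $\divg_f^*\divg_fh$ and recombining with the $dK$ terms. The only minor imprecision is your attribution of the $\tfrac18(h_{jk}H^2_{ik}+h_{ik}H^2_{jk})$ terms solely to varying the inverse metrics in $H^2_{ij}$ — they in fact arise from the $\Rc\circ h+h\circ\Rc$ part of the Lichnerowicz Laplacian (and the variation of $\nabla^2f$) after inserting the soliton equation $\Rc=\tfrac14H^2-\nabla^2f$ — but you correctly identify this matching as the locus of the bookkeeping.
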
  
\begin{proof}
Recall that in \cite{K} Lemma 3.5 we computed the derivative in the general case. Here, we use \Cref{P3} to derive that
\begin{align*}
    \mathring{R}^+(h)_{ij}=\mathring{R}(h)_{ij}-\frac{1}{2}\nabla_kH_{ilj}h_{kl}-\frac{1}{4}H_{ilm}H_{jkm}h_{lk}.
\end{align*}
Replace $\mathring{R}$ by $\mathring{R}^+$, we get our first result. Now, we consider $\gamma=h-K \in T_{\mathcal{G}}S^f_\mathcal{G}$. Using (\ref{TS}) , we have
\begin{align*}
   (\divg_f^* \divg_fh)_{ij}&=-\frac{1}{2}\Big(\nabla_i( \divg_fh)_j+\nabla_j( \divg_fh)_i \Big)
   \\&=-\frac{1}{4}\Big(\nabla_iK_{ab}H_{jab}+\nabla_jK_{pq}H_{ipq}+K_{ab}(\nabla_iH_{jab}+\nabla_jH_{iab}) \Big).
\end{align*}
Therefore, we see that
\begin{align*}
    \nonumber&\kern-1em \frac{\partial}{\partial t}\Big|_{t=0} \Big(R_{ij}-\frac{1}{4} H^2_{ij}+\nabla_i\nabla_j f\Big)
    \\&= -\frac{1}{2}\triangle_f h_{ij}-(\mathring{R} h)_{ij}-(\divg_f^* \divg_fh)_{ij}-\nabla_i\nabla_j(\frac{\tr_g h}{2}-\phi)+\frac{1}{8}(h_{jk}H^2_{ik}+h_{ik}H^2_{jk})+\frac{1}{2}h_{ac}H_{iab}H_{jcb}
    \\& \kern2em-\frac{1}{4}\Big((dK)_{iab}H_{jab}+H_{iab}(dK)_{jab}\Big)
    \\&=-\frac{1}{2}\triangle_f h_{ij}-(\mathring{R}^+ h)_{ij}+\frac{1}{8}(h_{jk}H^2_{ik}+h_{ik}H^2_{jk})+\frac{1}{4}h_{ac}H_{iab}H_{jcb}+\frac{1}{2}\nabla_l H_{ijk}h_{lk}
    \\& \kern2em-\frac{1}{2}\Big(\nabla_aK_{bi}H_{jab}+\nabla_aK_{bj}H_{iab}\Big)+\frac{1}{4}K_{ab}(\nabla_i H_{jab}+\nabla_j H_{iab}).
\end{align*}
Here, we note that $\tr_gh=2\phi$.
\end{proof}

\begin{lemma}\label{L4}
Given a compact steady gradient generalized Ricci soliton $\mathcal{G}(g,b)$. Suppose $\mathcal{G}(g_t,b_t)$ is a one-parameter family of generalized metrics and $f_t$ is the minimizer of $\lambda(g_t,b_t)$ such that 
\begin{align*}
    &\frac{\partial}{\partial t}\Big|_{t=0}(g_t-b_t) =\gamma=h-K, \quad  (g_0,b_0)=(g,b),
    \\& \frac{\partial}{\partial t}\Big|_{t=0}f_t =\phi, \quad  f_0=f.
\end{align*}
We have
\begin{align*}
    \nonumber\kern-1em \frac{\partial}{\partial t}\Big|_{t=0} \Big(d^*H+i_{\nabla f}H\Big)_{ij}&= -\triangle_f K_{ij}-2\mathring{R}^+(K)_{ij}-\nabla_i(d_f^*K)_j+ \nabla_j(d_f^*K)_i+ h_{ml}\nabla_l H_{mij}
    \\&\kern2em+\frac{1}{4}H_{il}^2K_{lj}-\frac{1}{4}H_{jl}^2K_{li}+\frac{1}{2}(\nabla_i H_{klj}+\nabla_j H_{ikl})K_{kl}-\frac{1}{2}H_{ijm}H_{klm}K_{kl}+\frac{1}{2}H_{ilm}H_{jkm}K_{lk}
    \\&\kern2em+\Big((\divg_f h)_p-\frac{1}{2}\nabla_p (\tr_gh-2\phi)\Big)H_{pij}+\nabla_l h_{ip}H_{lpj}+\nabla_l h_{jp}H_{lip}.
\end{align*}
In particular, if $\gamma=h-K\in  T_{\mathcal{G}}S^f_\mathcal{G}$, we have
\begin{align*}
    \nonumber\kern-1em \frac{\partial}{\partial t}\Big|_{t=0} \Big(d^*H+i_{\nabla f}H\Big)_{ij}&= -\triangle_f K_{ij}-2\mathring{R}^+(K)_{ij}+\frac{1}{2}(\nabla_iH_{klj}+\nabla_jH_{kli})K_{kl}+h_{ml}\nabla_l H_{mij}
    \\&\kern2em+\frac{1}{2}K_{kl}H_{ikm}H_{jlm}+\frac{1}{4}H_{il}^2K_{lj}-\frac{1}{4}H_{jl}^2K_{li}+\nabla_l h_{ip}H_{lpj}+\nabla_l h_{jp}H_{lip}.
\end{align*}
\end{lemma}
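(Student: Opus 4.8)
The plan is to differentiate the second soliton quantity $d^*_gH+i_{\nabla f}H=d_f^*H$ along the family and then rewrite the resulting Levi--Civita expression in terms of the Bismut curvature. Three things vary in $d_f^*H$: the three-form, since $H_t=H_0+db_t$ forces $\partial_tH=d\dot b$, and because $\gamma=\dot g-\dot b=h-K$ one has $\dot b=K$ and hence $\partial_tH=dK$; the metric $g_t$, which enters $d^*$ through the Christoffel symbols and through the raising of indices, as well as entering $\nabla f$; and the function $f_t$. Splitting accordingly,
\[
\tfrac{\partial}{\partial t}\Big|_{t=0}(d_f^*H)_{ij}=d_f^*(dK)_{ij}+\big((\partial_td^*)H\big)_{ij}+\big(i_{\partial_t\nabla f}H\big)_{ij},
\]
and I would treat the three contributions in turn, using the standard first variations $\dot\Gamma^m_{ai}=\tfrac12 g^{mp}(\nabla_ah_{ip}+\nabla_ih_{ap}-\nabla_ph_{ai})$, $\partial_tg^{ij}=-h^{ij}$ and $\partial_t(\nabla f)^i=-h^{ij}\nabla_jf+\nabla^i\phi$. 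This is essentially the Levi--Civita formula recorded in \cite{K}, Lemma 3.5, which could also simply be quoted; the new content is the reorganization that follows.

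The main structural step is the first contribution $d_f^*(dK)$, which is second order in $K$. Expanding $(dK)_{lij}=\nabla_lK_{ij}+\nabla_iK_{jl}+\nabla_jK_{li}$ and contracting with the codifferential (together with the $i_{\nabla f}$ term) produces $-\triangle K_{ij}$, the $f$-twisting term $\nabla^lf\,\nabla_lK_{ij}$ so that $-\triangle K+\nabla f\cdot\nabla K=-\triangle_fK$, two exact terms combining into $-\nabla_i(d_f^*K)_j+\nabla_j(d_f^*K)_i$, and two commutator terms $[\nabla^l,\nabla_i]K_{jl}$ and $[\nabla^l,\nabla_j]K_{li}$. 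Each commutator yields one Ricci contraction and one $\mathring R(K)$-type contraction, so together they contribute $-2\mathring R(K)_{ij}$ and Ricci terms; the Ricci terms combine with the Hessian terms left over from the exact-term manipulation into $\Rc+\nabla^2f$, which the soliton equation (\ref{GRS}) identifies with $\tfrac14H^2$, producing the $\tfrac14(H^2_{il}K_{lj}-H^2_{jl}K_{li})$ part. Then \Cref{P3} replaces $-2\mathring R(K)$ by $-2\mathring R^+(K)$ at the cost of explicit $\nabla H$ and $H\otimes H$ terms, which merge with the torsion terms arising from $i_{\nabla f}(dK)$ and from $\partial_t(\nabla f)$; the $\dot\Gamma$- and $\partial_tg^{ij}$-contributions to $(\partial_td^*)H$ produce, respectively, $\nabla_lh_{ip}H_{lpj}+\nabla_lh_{jp}H_{lip}$, the term $\big((\divg_fh)_p-\tfrac12\nabla_p(\tr_gh-2\phi)\big)H_{pij}$ after collecting the $\nabla f$ and $\nabla\phi$ pieces, and $h_{ml}\nabla_lH_{mij}$. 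Rearranging the remaining $\nabla H$-contractions with $dH=0$, and, where needed, the second Bianchi identity for the Levi--Civita curvature, yields the stated general formula.

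For the specialization to $\gamma=h-K\in T_{\mathcal G}S^f_{\mathcal G}$, I would invoke \Cref{RTS} / (\ref{TS}): $\overline{\divg}_f\gamma=0$ unpacks, via \Cref{T5}, to $d_f^*K=0$ and $(\divg_fh)_l=\tfrac12K_{ab}H_{lab}$, while (\ref{Df}) forces $\tr_gh=2\phi$. Substituting these annihilates the $dd_f^*K$ terms and the $\nabla_p(\tr_gh-2\phi)$ term and turns $(\divg_fh)_pH_{pij}$ into $\tfrac12K_{ab}H_{pab}H_{pij}$, which merges with the cubic-in-$H$ terms already present; collecting everything gives the simplified formula. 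The main obstacle is bookkeeping rather than any single idea: the raw variation contains many $\nabla H$- and $H\otimes H\otimes K$-contractions with delicate index placements and signs, and the nontrivial assertion is precisely that they collapse into $-\triangle_fK-2\mathring R^+(K)$ plus the short residual list, so the work lies in applying $dH=0$, the second Bianchi identity and the soliton equations in exactly the right combinations. A secondary subtlety is making sure the \emph{twisted} operators $\triangle_f$ and $d_f^*$ (not their untwisted counterparts) emerge, which is where the $\nabla f$ pieces of $d_f^*$ and the Hessian of $f$ get absorbed, and matching sign conventions for $d^*$, the curvature operator and $\mathring R^\pm$ so that the output is consistent with \Cref{P3}.
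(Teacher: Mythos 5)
Your proposal is correct and follows essentially the same route as the paper: quote the Levi--Civita first-variation formulas for $d^*H$ and $i_{\nabla f}H$, expand $-\nabla_l(dK)_{lij}$ by commuting derivatives to produce $-\triangle_fK-2\mathring R(K)+\nabla_i(d_f^*K)_j$-type terms plus $\Rc\circ K+K\circ\Rc$, absorb the latter via the soliton equation into $\tfrac14 H^2$-terms, convert $\mathring R(K)$ to $\mathring R^+(K)$ with \Cref{P3}, simplify with $dH=0$, and specialize using \Cref{RTS}. The only cosmetic difference is your grouping of the variation into three contributions of $d_f^*H$ rather than the paper's split into $\partial_t(d^*H)$ and $\partial_t(i_{\nabla f}H)$, which changes nothing substantive.
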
  
\begin{proof}
First, we note that the variation of $d^*H$ and $i_{\nabla f}H$ are given by
\begin{align*}
    \frac{\partial}{\partial t}\Big|_{t=0} (d^* H)_{ij}&=h_{ml}\nabla_l H_{mij}-\nabla_l(dK_{lij})+\Big((\divg h)_p-\frac{1}{2}\nabla_p \tr_gh\Big)H_{pij}+\nabla_l h_{ip}H_{lpj}+\nabla_l h_{jp}H_{lip}
\end{align*}
and
\begin{align*}
   \frac{\partial}{\partial t}\Big|_{t=0} (i_{\nabla f} H)_{ij}=\nabla_l\phi H_{lij}+\nabla_lf (dK)_{lij}-h_{lk}H_{kij}\nabla_lf.
\end{align*}
Therefore, 
\begin{align*}
    \nonumber&\kern-1em \frac{\partial}{\partial t}\Big|_{t=0} \Big(d^*H+i_{\nabla f}H\Big)_{ij}
    \\&= -\nabla_l(dK)_{lij}+ h_{ml}\nabla_l H_{mij}+\Big((\divg_f h)_p-\frac{1}{2}\nabla_p (\tr_gh-2\phi)\Big)H_{pij}+\nabla_l h_{ip}H_{lpj}+\nabla_l h_{jp}H_{lip}+\nabla_l f (dK)_{lij}.
\end{align*}
Then,
\begin{align*}
    -\nabla_l(dK)_{lij}&=-\nabla_l(\nabla_lK_{ij}+\nabla_iK_{jl}+\nabla_j K_{li})
    \\&=-\triangle K_{ij}-\nabla_l\nabla_iK_{jl}-\nabla_l\nabla_j K_{li}
    \\&=-\triangle K_{ij}-\nabla_i(d^*K)_j+ \nabla_j(d^*K)_i+R_{lijm}K_{ml}+R_{ljim}K_{lm}+(\Rc\circ K+K\circ \Rc)_{ij}
    \\&=-\triangle K_{ij}-\nabla_i(d^*K)_j+ \nabla_j(d^*K)_i-2\mathring{R}(K)_{ij}+(\Rc\circ K+K\circ \Rc)_{ij}
    \\&=-\triangle K_{ij}-\nabla_i(d_f^*K)_j+ \nabla_j(d_f^*K)_i-2\mathring{R}(K)_{ij}+\frac{1}{4}H_{il}^2K_{lj}-\frac{1}{4}H_{jl}^2K_{li}+\nabla_lf(\nabla_i K_{lj}-\nabla_j K_{li}),
\end{align*}
where we use the fact that $d_f^*K=d^*K+i_{\nabla f}K$. Similarly, we can replace the Riemann curvature with the Bismut Riemann curvature and deduce that
\begin{align*}
     \mathring{R}^+(K)_{ij}=\mathring{R}(K)_{ij}+\Big(\frac{1}{2}\nabla_i H_{klj}-\frac{1}{2}\nabla_k H_{ilj}-\frac{1}{4}H_{ijm}H_{klm}+\frac{1}{4}H_{ilm}H_{kjm}\Big)K_{kl}.
\end{align*}
Using the fact that $dH=0$, we have
\begin{align*}
    \nabla_kH_{ilj}K_{kl}=\frac{1}{2}(\nabla_iH_{klj}+\nabla_jH_{ilk})K_{kl}.
\end{align*}
Thus, we get our first result. If we consider $\gamma=h-K\in  T_{\mathcal{G}}S^f_\mathcal{G}$, our result follows by (\ref{TS}).

\end{proof}

\begin{proposition}\label{P5}
Given a compact steady gradient generalized Ricci soliton $\mathcal{G}(g,b)$. Suppose $\mathcal{G}(g_t,b_t)$ is a one-parameter family of generalized metrics and $f_t$ is the minimizer of $\lambda(g_t,b_t)$ such that 
\begin{align*}
    &\frac{\partial}{\partial t}\Big|_{t=0}(g_t-b_t) =\gamma=h-K, \quad  (g_0,b_0)=(g,b),
    \\& \frac{\partial}{\partial t}\Big|_{t=0}f_t =\phi, \quad  f_0=f.
\end{align*}
We have
\begin{align*}
     \frac{\partial }{\partial t}\Big|_{t=0}\Rc^{H,f}=-\frac{1}{2}\overline{\triangle}_f\gamma-\mathring{R}^+(\gamma)-\frac{1}{2}\overline{\divg}_f^*\overline{\divg}_f\gamma-\frac{1}{2}(\nabla^+)^2u.
\end{align*}
where $u=\tr_gh-2\phi$.
In particular, if $\gamma\in  T_{\mathcal{G}}S^f_\mathcal{G}$, we have
\begin{align*}
    \frac{\partial }{\partial t}\Big|_{t=0}\Rc^{H,f}=-\frac{1}{2}\overline{\triangle}_f\gamma-\mathring{R}^+(\gamma).
\end{align*}

\end{proposition}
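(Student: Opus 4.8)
\emph{Plan.} The strategy is to split the twisted Bakry--Emery tensor into its symmetric and skew parts and differentiate each using the variation formulas already in hand. Write
\[
    \Rc^{H,f} = \Big(\Rc - \tfrac14 H^2 + \nabla^2 f\Big) - \tfrac12\Big(d^*H + i_{\nabla f}H\Big),
\]
so that along the family $\mathcal{G}(g_t,b_t)$,
\[
    \frac{\partial}{\partial t}\Big|_{t=0}\Rc^{H,f} = \frac{\partial}{\partial t}\Big|_{t=0}\Big(\Rc - \tfrac14 H^2 + \nabla^2 f\Big) - \tfrac12\frac{\partial}{\partial t}\Big|_{t=0}\Big(d^*H + i_{\nabla f}H\Big).
\]
For the first summand I would substitute the general formula of \Cref{L3}, and for the second, $-\tfrac12$ times the general formula of \Cref{L4}. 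Everything then reduces to collecting terms on the right-hand side by schematic type.

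The second-order pieces combine immediately: $-\tfrac12\triangle_f h$ from \Cref{L3} together with $+\tfrac12\triangle_f K$ coming out of \Cref{L4} give $-\tfrac12\triangle_f\gamma$, while $-\mathring{R}^+(h)$ and $+\mathring{R}^+(K)$ give $-\mathring{R}^+(\gamma)$ by linearity of $\mathring{R}^+$ in $\gamma = h - K$. For the remaining torsion terms I would compare against $-\tfrac12\big(\overline{\triangle}_f\gamma - \triangle_f\gamma\big)$, expanding $\overline{\triangle}_f$ by \Cref{L1} and using $\overline{\nabla}_m\gamma_{ij} = \nabla_m\gamma_{ij} - \tfrac12 H_{mjk}\gamma_{ik} + \tfrac12 H_{mik}\gamma_{kj}$ from (\ref{MC}); the terms linear in $\nabla h$, $\nabla K$, in $H^2\gamma$, in $H\!\cdot\!H\,\gamma$, and in $\nabla H\cdot\gamma$ should then match up exactly after applying the total antisymmetry of $H$ and the Bianchi identity $dH=0$ (needed, in particular, to rewrite the $\nabla_k H_{ilj}$-type terms produced by the Bismut curvature of \Cref{P3}). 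Finally, the genuinely first-order terms — those built from $\divg_f h$, $d_f^*K$, and $u = \tr_g h - 2\phi$ — should reorganize, via \Cref{div*} and the identity $(\overline{\divg}_f\gamma)_l = \big((\nabla^+)^m\gamma_{ml} - \nabla_m f\,\gamma_{ml},\,(\nabla^-)^m\gamma_{lm} - \nabla_m f\,\gamma_{lm}\big)$ (remark following (\ref{div})), into precisely $-\tfrac12\overline{\divg}_f^*\overline{\divg}_f\gamma - \tfrac12(\nabla^+)^2 u$; note the symmetric Hessian $-\tfrac12\nabla^2 u$ appears from \Cref{L3} and the skew correction $\tfrac14 i_{\nabla u}H$ from the $H_{pij}$-term in \Cref{L4}, which is exactly how $(\nabla^+)^2 u$ assembles from the two sides. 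The last assertion is then immediate: if $\gamma\in T_{\mathcal{G}}S^f_{\mathcal{G}}$ then $\overline{\divg}_f\gamma = 0$ by (\ref{TS}), so $\overline{\divg}_f^*\overline{\divg}_f\gamma = 0$, and (\ref{Df}) forces $\tr_g h = 2\phi$, hence $u = 0$ and $(\nabla^+)^2 u = 0$.

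The main obstacle is the middle step's bookkeeping: there are several torsion terms of the same schematic shape ($H\nabla\gamma$, $H^2\gamma$, $H\!\cdot\!H\,\gamma$, $\nabla H\cdot\gamma$) split between \Cref{L3} and \Cref{L4} with superficially different index contractions, and one must repeatedly invoke the total antisymmetry of $H$ and $dH=0$ to see they assemble into exactly $-\tfrac12\overline{\triangle}_f\gamma$ with no remainder. A secondary subtlety is keeping the $\nabla^+$ versus $\nabla^-$ placement straight in the first-order terms, since $\overline{\divg}_f$ uses $\nabla^+$ on the first slot and $\nabla^-$ on the second, and $(\nabla^+)^2 u$ is not symmetric — its skew part must be produced on the nose by the $K$-side of the computation.
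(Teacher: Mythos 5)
Your proposal is correct and follows essentially the same route as the paper: decompose $\Rc^{H,f}$ into the symmetric piece and $-\tfrac12(d^*H+i_{\nabla f}H)$, differentiate via \Cref{L3} and \Cref{L4}, assemble the torsion terms into $-\tfrac12\overline{\triangle}_f\gamma$ using \Cref{L1}, and repackage the first-order terms through \Cref{div*} and the identity $(\overline{\divg}_f\gamma)_l=(\alpha_l+\zeta_l,\alpha_l-\zeta_l)$ into $-\tfrac12\overline{\divg}_f^*\overline{\divg}_f\gamma-\tfrac12(\nabla^+)^2u$. The paper's proof is exactly this computation, including your observation that the skew part of $(\nabla^+)^2u$ is supplied by the $\tfrac14\nabla_lu\,H_{lij}$ term.
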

\begin{proof}
Recall that $\Rc^{H,f}=\Rc-\frac{1}{4}H^2+\nabla^2f-\frac{1}{2}(d^*H+i_{\nabla f}H)$. \Cref{L3} and \Cref{L4} imply that   
\begin{align*}
    \frac{\partial }{\partial t}\Big|_{t=0}\Rc^{H,f}_{ij}&=-\frac{1}{2}\triangle_f \gamma_{ij}-\mathring{R}^+(\gamma)_{ij}+\frac{1}{4}\gamma_{ac}H_{iab}H_{jcb}+\frac{1}{8}(H_{il}^2\gamma_{lj}+\gamma_{il}H_{lj}^2)-\frac{1}{2}\nabla_a\gamma_{ib}H_{jab}+\frac{1}{2}\nabla_a\gamma_{bj}H_{iab}
    \\&\kern2em-(\divg_f^*\divg_f h)_{ij}+\frac{1}{2}d(d^*_f K)_{ij}-\frac{1}{4}\Big(\nabla_i(K_{ab}H_{jab})+\nabla_j(K_{ab}H_{iab})\Big)
    \\&\kern2em -\frac{1}{2}\Big((\divg_f h)_l-\frac{1}{2}K_{ab}H_{lab}\Big)H_{lij} -\nabla_i\nabla_j(\frac{\tr_gh}{2}-\phi)+\frac{1}{2}\nabla_l(\frac{\tr_gh}{2}-\phi)H_{lij}
    \\&=-\frac{1}{2}\overline{\triangle}_f\gamma_{ij}-\mathring{R}^+(\gamma)_{ij}-(\divg_f^*\alpha)_{ij}+\frac{1}{2}(d\zeta)_{ij}-\frac{1}{2}\alpha_l H_{lij}-\frac{1}{2}\nabla_i\nabla_ju+\frac{1}{4}\nabla_lu H_{lij}
    \\&=-\frac{1}{2}\overline{\triangle}_f\gamma_{ij}-\mathring{R}^+(\gamma)_{ij}+(\frac{\nabla_i^+\alpha_j+\nabla_j^-\alpha_i}{2})+(\frac{\nabla_i^+\zeta_j-\nabla_j^-\zeta_i}{2})-\frac{1}{2}\nabla^+_i\nabla_j^+u.
\end{align*}
where 
\begin{align*}
    \alpha_p\coloneqq (\divg_f h)_p-\frac{1}{2}K_{kl}H_{klp}, \quad \zeta\coloneqq d^*_fK \quad \text{and}\quad u\coloneqq\tr_gh-2\phi.
\end{align*}
Recall that in \Cref{RTS}, we deduced that
\begin{align*}
   (\divg_f\gamma)_l=(\alpha_l+\zeta_l,\alpha_l-\zeta_l).
\end{align*}
By \Cref{div*}, we conclude that 
\begin{align*}
     \frac{\partial }{\partial t}\Big|_{t=0}\Rc^{H,f}_{ij}=-\frac{1}{2}\overline{\triangle}_f\gamma_{ij}-\mathring{R}^+(\gamma)_{ij}-\frac{1}{2}\overline{\divg}_f^*\overline{\divg}_f\gamma_{ij}-\frac{1}{2}\nabla^+_i\nabla_j^+u.
\end{align*}
If $\gamma\in  T_{\mathcal{G}}S^f_\mathcal{G}$, then $\overline{\divg}_f\gamma$ vanish, giving the final claim.

\end{proof}

\subsection{Second variation formula}

Next, we analyze the second variation formula of the generalized Einstein--Hilbert functional.
\begin{theorem}\label{T6}
Given a compact steady gradient generalized Ricci soliton $\mathcal{G}(g,b)$ on a smooth manifold $M$. Suppose $\mathcal{G}(g_t,b_t)$ is a one-parameter family of generalized metrics such that 
\begin{align*}
    &\frac{\partial}{\partial t}\Big|_{t=0}(g_t-b_t) =\gamma=h-K, \quad  (g_0,b_0)=(g,b).
\end{align*}
The second variation of $\lambda$ on  $\mathcal{G}(g,b)$ is given by 
\begin{align*}
     \nonumber\frac{d^2}{dt^2}\Big|_{t=0}\lambda=\int_M \Big\langle \gamma, \frac{1}{2}\overline{\triangle}_f\gamma+\mathring{R}^+(\gamma)+\frac{1}{2}\overline{\divg}_f^*\overline{\divg}_f\gamma+\frac{1}{2}(\nabla^+)^2u \Big\rangle e^{-f}dV_g,
\end{align*} 
where $u$ is the unique solution of 
\begin{align*}
    \triangle_f u=\overline{\divg}_f\overline{\divg}_f\gamma,\quad \int_M ue^{-f}dV_g=0.
\end{align*}

\end{theorem}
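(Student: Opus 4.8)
The plan is to reduce the second variation of $\lambda$ to the first variation of $\Rc^{H,f}$, which has already been computed in \Cref{P5}. The starting point is the first variation formula (\ref{FV}), which reads $\frac{d}{dt}\lambda(g_t,b_t)=-\int_M \langle \gamma_t, \Rc^{H_t,f_t}\rangle e^{-f_t}dV_{g_t}$, where $\gamma_t=\frac{\partial}{\partial t}(g_t-b_t)$. Differentiating this identity once more in $t$ and evaluating at $t=0$, we obtain three types of terms: one in which the extra derivative hits $\Rc^{H,f}$, one in which it hits $\gamma_t$, and one in which it hits the measure $e^{-f_t}dV_{g_t}$. Because $\mathcal{G}(g,b)$ is a steady gradient generalized Ricci soliton, \Cref{C2} gives $\Rc^{H,f}=0$ at $t=0$, so every term in which the derivative does \emph{not} land on $\Rc^{H,f}$ vanishes. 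Hence
\begin{align*}
    \frac{d^2}{dt^2}\Big|_{t=0}\lambda = -\int_M \Big\langle \gamma, \frac{\partial}{\partial t}\Big|_{t=0}\Rc^{H_t,f_t}\Big\rangle e^{-f}dV_g.
\end{align*}
This is the key simplification that makes the computation tractable; it is entirely analogous to the situation for Perelman's $\lambda$-functional at a steady soliton.

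Next I would substitute the formula from \Cref{P5}, namely $\frac{\partial}{\partial t}|_{t=0}\Rc^{H,f}=-\frac{1}{2}\overline{\triangle}_f\gamma-\mathring{R}^+(\gamma)-\frac{1}{2}\overline{\divg}_f^*\overline{\divg}_f\gamma-\frac{1}{2}(\nabla^+)^2u$, where $u=\tr_g h-2\phi$ and $\phi=\frac{\partial}{\partial t}|_{t=0}f_t$. Substituting and using bilinearity of the $f$-twisted inner product immediately yields
\begin{align*}
    \frac{d^2}{dt^2}\Big|_{t=0}\lambda=\int_M \Big\langle \gamma, \tfrac{1}{2}\overline{\triangle}_f\gamma+\mathring{R}^+(\gamma)+\tfrac{1}{2}\overline{\divg}_f^*\overline{\divg}_f\gamma+\tfrac{1}{2}(\nabla^+)^2u \Big\rangle e^{-f}dV_g,
\end{align*}
which is exactly the claimed expression. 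It remains only to identify $u$ with the announced PDE solution. For this I invoke the identity (\ref{Df}) from the remark following \Cref{L2}, which states $\triangle_f(\tr_g\gamma-2\phi)=\overline{\divg}_f\overline{\divg}_f\gamma$ together with the normalization $\int_M(\tr_g\gamma-2\phi)e^{-f}dV_g=0$. Since $u=\tr_g h-2\phi=\tr_g\gamma-2\phi$ (as $\tr_g K=0$ for the antisymmetric part $K$), $u$ is precisely the unique solution of $\triangle_f u=\overline{\divg}_f\overline{\divg}_f\gamma$ with zero $f$-weighted average; uniqueness follows because $\triangle_f$ is self-adjoint with respect to (\ref{6}) and its kernel on a compact manifold consists of constants, so the normalization pins down $u$.

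One subtlety to be careful about: the family $\mathcal{G}(g_t,b_t)$ in the hypothesis is arbitrary, with no gauge-slice condition imposed, and $f_t$ must be taken to be the minimizer of $\lambda(g_t,b_t)$ along the way (using \Cref{P4} to guarantee $f_t$ depends smoothly on $t$, so that $\phi$ is well-defined and the differentiations above are legitimate). I would also note that the second-order terms coming from reparametrizing $t$ or from the difference between $\frac{\partial}{\partial t}(g_t-b_t)$ at general $t$ versus at $t=0$ are harmless precisely because they are multiplied by $\Rc^{H,f}=0$; this is the same mechanism that makes the second variation of $\lambda$ well-defined on the naive tangent space without needing a connection on the space of metrics. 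The main obstacle in the argument is not in this theorem itself — whose proof is a short assembly — but is entirely front-loaded into \Cref{P5} and \Cref{L3}, \Cref{L4}, where the torsion terms from the Bismut connections must be organized into the mixed connection $\overline{\nabla}$ and its Laplacian $\overline{\triangle}_f$; granting those, Theorem \ref{T6} follows in a few lines.
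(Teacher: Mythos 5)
Your proposal is correct and follows essentially the same route as the paper: differentiate the first variation formula (\ref{FV}), use that $\Rc^{H,f}=0$ at the soliton to kill all terms where the derivative misses $\Rc^{H,f}$, substitute \Cref{P5}, and identify $u$ via \Cref{L2} and (\ref{Df}). Your added remarks on the smooth dependence of $f_t$ (via \Cref{P4}) and the uniqueness of $u$ are sensible elaborations of what the paper leaves implicit.
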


\begin{proof}
The first variation formula (\ref{FV}) suggests that
\begin{align*}
    \frac{d}{dt}\lambda(g_t,b_t)=\int_M -\langle \gamma, \Rc^{H,f} \rangle e^{-f}dV_g.
\end{align*}
so
\begin{align*}
    \frac{d^2}{dt^2}\Big|_{t=0}\lambda(g_t,b_t)=\int_M -\langle \gamma, \frac{\partial }{\partial t}\Big|_{t=0}\Rc^{H,f} \rangle e^{-f}dV_g.
\end{align*}
By \Cref{P5} and \Cref{L2}, our result follows.

\end{proof}

\begin{corollary}\label{C3}
Every Bismut-flat, compact manifolds are linearly stable. The kernel of the second variation on a  Bismut-flat, compact manifold is non-trivial 2-tensor $\gamma$ which are parallel with respect to the connection $\overline{\nabla}$.
\end{corollary}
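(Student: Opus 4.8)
The plan is to read off both claims directly from the second variation formula in Theorem~\ref{T6} together with Proposition~\ref{P3}. When $(M,\mathcal{G})$ is Bismut-flat and compact, the Bismut curvature $R^+$ vanishes identically, so $\mathring{R}^+(\gamma)=0$ for every $\gamma\in\otimes^2T^*M$. Moreover, since $(M,g,H)$ is Bismut-flat, Proposition~\ref{P3} gives $\nabla H=0$; combined with the fact that on a compact Bismut-flat manifold the soliton potential $f$ can be taken constant (the generalized Einstein equation $\Rc^{H,f}=0$ forces $\nabla^2 f$ to balance $\Rc-\tfrac14 H^2=0$, hence $f$ is constant and $\triangle_f=\triangle$, $\overline{\divg}_f=\overline{\divg}$), the auxiliary function $u$ solves $\triangle u=\overline{\divg}\,\overline{\divg}\,\gamma$. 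For $\gamma$ in the slice $T_{\mathcal{G}}S^f_{\mathcal{G}}$, i.e.\ $\overline{\divg}_f\gamma=0$ by Proposition~\ref{RTS}, the right-hand side vanishes, so $u$ is constant, whence $(\nabla^+)^2 u=0$ and $\overline{\divg}_f^*\overline{\divg}_f\gamma=0$. Therefore on the slice the second variation reduces to
\begin{align*}
    \frac{d^2}{dt^2}\Big|_{t=0}\lambda=\frac{1}{2}\int_M \langle\gamma,\overline{\triangle}_f\gamma\rangle\, e^{-f}dV_g=-\frac{1}{2}\int_M |\overline{\nabla}\gamma|^2\, e^{-f}dV_g\le 0,
\end{align*}
using $\overline{\triangle}_f=-\overline{\nabla}^{*_f}\overline{\nabla}$ from the definition~(\ref{triangle}). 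Since $\lambda$ is invariant under the generalized gauge group and the slice theorem (Theorem~\ref{T5}) identifies the normal directions to the orbit with $T_{\mathcal{G}}S^f_{\mathcal{G}}$, this proves linear stability: the second variation is nonpositive on all directions transverse to the gauge orbit.

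For the kernel statement: from the displayed identity, $\frac{d^2}{dt^2}|_{t=0}\lambda=0$ for $\gamma\in T_{\mathcal{G}}S^f_{\mathcal{G}}$ if and only if $\int_M|\overline{\nabla}\gamma|^2 e^{-f}dV_g=0$, i.e.\ $\overline{\nabla}\gamma=0$. Conversely, any $\overline{\nabla}$-parallel $2$-tensor $\gamma$ automatically satisfies $\overline{\divg}_f\gamma=0$ (contract $\overline{\nabla}\gamma=0$ and use that $f$ is constant), hence lies in the slice and in the kernel. So the kernel is exactly the space of nontrivial (in the sense of Proposition~\ref{RTS}) $2$-tensors parallel for $\overline{\nabla}$; that this space is nonzero is what makes the statement interesting, and it is supplied by Example~\ref{E3} / Theorem~\ref{T2}, where Lie-theoretic parallel tensors are produced on simply connected Bismut-flat manifolds.

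The only real subtlety, and the step I would be most careful about, is the reduction $f=\mathrm{const}$ and the resulting simplification $\triangle_f=\triangle$, $\overline{\divg}_f=\overline{\divg}$ in the formula for $u$ — one must check that constancy of $f$ indeed follows from compactness plus $\Rc-\tfrac14 H^2=0$ (it does, since $\nabla^2 f=0$ on a compact manifold forces $\nabla f=0$), so that the $(\nabla^+)^2u$ and $\overline{\divg}_f^*\overline{\divg}_f\gamma$ terms genuinely drop out on the slice. Everything else is a direct substitution into Theorem~\ref{T6} and an integration by parts via~(\ref{triangle}); no further estimates are needed.
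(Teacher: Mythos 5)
Your argument is correct and is essentially the paper's proof of Corollary~\ref{C3}: reduce to the slice $T_{\mathcal{G}}S^f_{\mathcal{G}}$ by gauge invariance, observe that $\overline{\divg}_f\gamma=0$ kills the $u$ and $\overline{\divg}_f^*\overline{\divg}_f$ terms in Theorem~\ref{T6}, that Bismut-flatness kills $\mathring{R}^+(\gamma)$, and integrate by parts to get $-\tfrac12\int_M|\overline{\nabla}\gamma|^2e^{-f}dV_g\le 0$. The digression through the constancy of $f$ is correct but not actually needed (the two extra terms vanish on the slice for any $f$ once $\overline{\divg}_f\gamma=0$), so it costs nothing.
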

\begin{proof}
Due to the fact that $\lambda$ is diffeomorphism invariant and the generalized slice theorem, it suffices to show that 
\begin{align*}
     \frac{d^2}{dt^2}&\Big|_{t=0}\lambda(g_t,b_t)\leq 0 \quad \text{for all $\gamma=h-K\in T_{\mathcal{G}}S^f_{\mathcal{G}}$.}
\end{align*}
Therefore, the result follows by \Cref{T6}.
\end{proof}

\begin{remark}
Let $(M^n,\mathcal{G})$ be a compact generalized Einstein manifold. By \cite{K} Lemma 4.9, for any conformal variation $\gamma$, $\frac{\partial^2 }{\partial t^2}\lambda(\gamma)\leq 0$. Thus, it suffices to consider variation $\Tilde{\gamma}$ with 
\begin{align*}
    \overline{\divg}\Tilde{\gamma}=0,\quad \tr_g\Tilde{\gamma}=0.
\end{align*}
Recall that in Einstein manifold case, it suffices to consider variations which lie in $ TT_g=\{h\in\Gamma(S^2M), \divg h=0,\kern0.2em \tr_gh=0\}$ to discuss the linear stability. Therefore, our result matches our expectation.

\end{remark}

\subsection{The kernel variation of the second variation}

In this subsection, we assume that $(M,g,H)$ is a compact, Bismut-flat manifold. \Cref{C3} implies that $\gamma$ is a kernel of the second variation if $\gamma$ is non-trivial and parallel to the connection $\overline{\nabla}$. The lemma below suggests an idea to construct parallel variations.
\begin{lemma}\label{L5}
Suppose we have a $\nabla^-$-parallel 1-form $\alpha$ and a $\nabla^+$-parallel 1-form $\beta$. The 2-tensor $\gamma=\alpha\otimes\beta$ is non-trivial and parallel with respect to the connection $\overline{\nabla}$.
\end{lemma}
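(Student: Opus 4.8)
The plan is to verify the two defining conditions of "non-trivial" and "parallel" directly from the definitions, using the hypotheses that $\alpha$ is $\nabla^-$-parallel and $\beta$ is $\nabla^+$-parallel together with the Bismut-flat structure. Recall from \Cref{RTS} that $\gamma$ is non-trivial precisely when $\overline{\divg}_f\gamma=0$, and from the Remark after the divergence definition that $(\overline{\divg}_f\gamma)_l=\big((\nabla^+)^m\gamma_{ml}-\nabla_mf\,\gamma_{ml},\,(\nabla^-)^m\gamma_{lm}-\nabla_mf\,\gamma_{lm}\big)$. So my first step is to compute $(\nabla^+)^m\gamma_{ml}$ and $(\nabla^-)^m\gamma_{lm}$ for $\gamma=\alpha\otimes\beta$: since $(\nabla^+_X\gamma)(Y,Z)=(\nabla^+_X\alpha)(Y)\,\beta(Z)+\alpha(Y)\,(\nabla^+_X\beta)(Z)$ does not quite hold (the connection $\overline{\nabla}$ mixes $\nabla^-$ on the first slot and $\nabla^+$ on the second), I should instead observe that $\overline{\nabla}(\alpha\otimes\beta)=(\nabla^-\alpha)\otimes\beta+\alpha\otimes(\nabla^+\beta)$, which is immediate from \eqref{MC}. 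Hence $\overline{\nabla}\gamma=0$ as soon as $\nabla^-\alpha=0$ and $\nabla^+\beta=0$.

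Once $\overline{\nabla}\gamma=0$ is established, parallelism is done. For non-triviality, I would compute the divergence. Using $\overline{\nabla}\gamma=0$ we get $(\nabla^+)^m\gamma_{ml}$ in terms of the difference between the $\nabla^+$ connection on the first index and the $\overline{\nabla}$ connection; more directly, $(\nabla^+)^m\gamma_{ml}=(\nabla^+)^m\alpha_m\,\beta_l+\alpha_m\,(\nabla^+)^m\beta_l$. The second term vanishes since $\nabla^+\beta=0$. For the first term I need $(\nabla^+)^m\alpha_m=\divg^+\alpha$; since $\alpha$ is $\nabla^-$-parallel, $\nabla\alpha=-\tfrac12 g^{-1}H(\cdot,\alpha^\sharp,\cdot)$ type expression, and the trace $\divg^+\alpha=\divg\alpha+\tfrac12 H_{mmk}\alpha_k=\divg\alpha$ by antisymmetry of $H$; and $\divg\alpha=-\tfrac12 H_{mmk}\alpha_k$ again by the parallel condition, which is zero. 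So $(\nabla^+)^m\gamma_{ml}=0$, and one must still handle the $-\nabla_mf\,\gamma_{ml}$ term. Here I invoke that on a compact Bismut-flat manifold the soliton potential $f$ is constant: indeed $\Rc^{H,f}=0$ with $Rm^+=0$ forces $\nabla^2 f$ to be determined, and a standard argument (e.g. integrating, or the fact that Bismut-flat compact manifolds are generalized Einstein with $f$ constant, cf.\ the examples) gives $\nabla f=0$, killing that term. The symmetric computation with $\nabla^-$ on the second slot handles the other component of $\overline{\divg}_f\gamma$.

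The last thing to check is that $\gamma$ is genuinely nontrivial in the sense of being a nonzero element — but here "non-trivial" in the paper's terminology just means "lies in $T_{\mathcal{G}}S^f_{\mathcal{G}}$", i.e.\ satisfies the slice condition, which is exactly what the divergence computation gives; so nothing further is needed beyond noting $\gamma\neq 0$ when $\alpha,\beta\neq 0$.

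The main obstacle I anticipate is bookkeeping the traces of $H$ against the parallel one-forms and confirming $\nabla f=0$ on compact Bismut-flat manifolds cleanly; all of this is elementary but requires care with the sign conventions in \eqref{13} and \eqref{MC}. The conceptual content — that $\overline{\nabla}$ was \emph{designed} so that $\overline{\nabla}(\alpha\otimes\beta)=(\nabla^-\alpha)\otimes\beta+\alpha\otimes(\nabla^+\beta)$ — is what makes the statement essentially immediate, and the divergence vanishing is then a formal consequence of $\overline{\nabla}\gamma=0$ together with $\nabla f=0$.
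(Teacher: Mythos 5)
Your proof is correct and follows essentially the same route as the paper's: both establish $\overline{\nabla}\gamma=0$ from the product rule $\overline{\nabla}(\alpha\otimes\beta)=(\nabla^-\alpha)\otimes\beta+\alpha\otimes(\nabla^+\beta)$, and then verify the slice condition by tracing the parallelism identities against the antisymmetric $H$. The only cosmetic difference is that the paper splits $\gamma$ into its symmetric part $h$ and skew part $-K$ and checks the two conditions of \Cref{T5} separately (silently dropping the $f$-twisting), whereas you compute the two components of $\overline{\divg}_f\gamma$ directly via the $\nabla^{\pm}$ product rule and explicitly justify that $\nabla f=0$ on a compact Bismut-flat manifold --- a point the paper leaves implicit but which is needed in either version.
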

\begin{proof}
By assumption, we see that
\begin{align*}
    &\nabla_m\alpha_i=-\frac{1}{2}H_{mik}\alpha_k, \quad\nabla_m\beta_j=\frac{1}{2}H_{mjl}\beta_l.
\end{align*}
Then,
\begin{align*}
\nabla_m\gamma_{ij}=\nabla_m\alpha_i\otimes \beta_j+\alpha_i\otimes\nabla_m\beta_j=-\frac{1}{2}H_{mik}\gamma_{kj}+\frac{1}{2}H_{mjl}\gamma_{il}.
\end{align*}
which shows that $\overline{\nabla}\gamma=0$. Also, we have
\begin{align*}
    h_{ij}=\frac{1}{2}(\alpha_i\otimes\beta_j+\alpha_j\otimes\beta_i), \quad K_{ij}=\frac{1}{2}(-\alpha_i\otimes\beta_j+\alpha_j\otimes\beta_i).
\end{align*}
Therefore,
\begin{align*}
    (\divg h)_l&=\frac{1}{2}\nabla^k(\alpha_k\otimes\beta_l+\alpha_l\otimes\beta_k)=\frac{1}{4}H_{klm}(\alpha_k\otimes\beta_m- \alpha_m\otimes \beta_k )=\frac{1}{2}K_{mk}H_{mkl},
    \\(d^*K)_l&=\frac{1}{2}\nabla^m(\alpha_m\otimes\beta_l-\alpha_l\otimes\beta_m)=\frac{1}{4}H_{mlk}(\alpha_m\otimes \beta_k+\alpha_k\otimes \beta_l)=\frac{1}{2}H_{mlk}h_{mk}=0,
\end{align*}
i.e., $\gamma$ is non-trivial.
\end{proof}

\begin{corollary}\label{C4}
    Any compact Lie group is linearly stable and admits a non-trivial variation $\gamma$ such that its second variation vanishes. Moreover, the dimension of the kernel of the second variation is no less than $n^2$.
\end{corollary}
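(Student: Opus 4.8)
The plan is to specialize \Cref{C3} to the Bismut-flat structure of \Cref{E3} and then feed \Cref{L5} with left- and right-invariant $1$-forms. Let $G$ be a compact Lie group of dimension $n$, let $g$ be a bi-invariant metric, and let $H$ be the closed three-form determined by $g^{-1}H(X,Y)=[X,Y]$ on left-invariant vector fields, as in \Cref{E3}; then $(G,g,H)$ is compact and Bismut-flat. Since $\Rc^+=\Rc-\tfrac14H^2-\tfrac12 d^*H$ vanishes, its symmetric part $\Rc-\tfrac14H^2$ and skew part $d^*H$ vanish separately, so $(G,g,H)$ is generalized Einstein and hence a compact steady gradient generalized Ricci soliton with constant $f$. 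Thus \Cref{C3} applies: $G$ is linearly stable and the kernel of the second variation is precisely the space of non-trivial $\overline{\nabla}$-parallel $2$-tensors.

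Next I would identify the Bismut connections on invariant vector fields. From \Cref{D5} together with $\nabla_XY=\tfrac12[X,Y]$ and $H(X,Y,Z)=\langle[X,Y],Z\rangle$ one reads off, for left-invariant $X,Y$, that $\nabla^+_XY=[X,Y]$ and $\nabla^-_XY=0$; hence every left-invariant vector field is $\nabla^-$-parallel as a global section, and by metric compatibility of $\nabla^-$ so is every left-invariant $1$-form. The inversion $\iota(a)=a^{-1}$ is an isometry of $g$ with $\iota^*H=-H$, so it conjugates $\nabla^+$ into $\nabla^-$; since $\iota$ carries left-invariant fields to right-invariant ones, every right-invariant $1$-form is $\nabla^+$-parallel (the Cartan--Schouten picture referred to in \Cref{E3}).

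With these facts, I would fix a basis $\alpha^1,\dots,\alpha^n$ of left-invariant $1$-forms and a basis $\beta^1,\dots,\beta^n$ of right-invariant $1$-forms and set $\gamma^{ij}=\alpha^i\otimes\beta^j$. By \Cref{L5} each $\gamma^{ij}$ is non-trivial, i.e.\ lies in $T_{\mathcal{G}}S^f_{\mathcal{G}}$, and is $\overline{\nabla}$-parallel, so by \Cref{C3} it lies in the kernel of the second variation. These $n^2$ tensors are linearly independent: restricting to the identity $e$, where both families of $1$-forms reduce to a single basis $\{\alpha^i_e\}$ of $T_e^*G$, a relation $\sum_{i,j}c_{ij}\gamma^{ij}_e=\sum_{i,j}c_{ij}\,\alpha^i_e\otimes\alpha^j_e=0$ forces all $c_{ij}=0$. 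Hence the kernel contains an $n^2$-dimensional subspace; in particular it is non-trivial and has dimension at least $n^2$.

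The analytic content is entirely carried by \Cref{C3} and \Cref{L5}, with \Cref{E3} supplying the geometry, so the only step demanding real care is the bookkeeping in the second paragraph: keeping the signs in \Cref{D5} straight and justifying the exchange of $\nabla^+$ and $\nabla^-$ under inversion. Everything else is a direct application of the cited results.
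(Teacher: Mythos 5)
Your proposal is correct and follows essentially the same route as the paper: realize the compact Lie group as the Bismut-flat structure of \Cref{E3}, observe that left-invariant $1$-forms are $\nabla^-$-parallel and right-invariant $1$-forms are $\nabla^+$-parallel, and apply \Cref{L5} together with \Cref{C3} to the $n^2$ tensors $\alpha^i\otimes\beta^j$. The only additions are welcome bookkeeping the paper leaves implicit, namely the sign check $\nabla^-_XY=0$ on left-invariant fields and the linear-independence argument at the identity giving the $n^2$ lower bound.
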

\begin{proof}
In \Cref{E3}, we know that any compact Lie group $G$ with a bi-invariant metric $g$ admits a Bismut-flat structure when we define a 3-form $H$ by $g^{-1}H(X,Y)=[X,Y]$ where $X,Y$ are left-invariant vector fields. Then, we consider the left-invariant coframe $\{\omega_1^L,\omega_2^L,...,\omega_n^L\}$ and the right-invariant coframe $\{\omega_1^R,\omega_2^R,...,\omega_n^R\}$. By definition, we see that each left-invariant one form is $\nabla^-$-parallel and right-invariant one form is $\nabla^+$-parallel. Therefore, corollary follows by \Cref{L5}. 
\end{proof}

\section{Infinitesimal Deformation}

In this section, we aim to discuss the infinitesimal deformations of gradient generalized Ricci solitons. Before we get started, let us review the results of Einstein manifolds. Most materials can be found in \cite{B} and a series of papers from Koiso \cite{Ko4,Ko1,Ko2, Ko5}. One can also consult with the work in the Ricci soliton case done by Kr{\"o}ncke Podest\`a, and Spiro \cite{K3,K4,K2,FA}.

\subsection{Infinitesimal Einstein Deformation}

Fix an Einstein metric $g$ on a manifold $M$. Let $\mathcal{M}_1$ denote the space of smooth metrics with unit volume. The moduli space of Einstein structures is the coset space $\mathcal{M}_1$ under the action of $\rho_{\mathcal{M}}$ endowed with the quotient topology. Naturally, we have a decomposition
\begin{align*}
    T_g\mathcal{M}_1=\im\divg^*\oplus (T_g\mathcal{M}_1\cap \ker\divg).
\end{align*}
Ebin's slice theorem suggests that there exists an analytic submanifold $\mathcal{S}_g\subset \mathcal{M}_1$ with $T_g \mathcal{S}_g=\ker\divg$. Then, we call the subset of Einstein metrics in $\mathcal{S}_g$ to be the premoduli space of Einstein structure around $g$.

Define the \emph{Einstein operator} $E$ by 
\begin{align*}
        E: \quad &\mathcal{M}_1\longrightarrow \Gamma(S^2M),
        \\& g\longmapsto \Rc_g-\frac{S(g)}{n}g,
\end{align*}
where $S(g)$ is the total scalar curvature functional which is defined as $S(g)=\int_M R_g dV_g$. In other words, Einstein metrics are the set $E^{-1}(0)$. Then, we say  $h\in \Gamma(S^2M)$ is an \emph{essential infinitesimal Einstein deformation} of an Einstein metric $g$ if 
\begin{align*}
    E'_g(h)=0 \quad \text{and} \quad h\in \ker \divg\cap\kern0.2em T_g\mathcal{M}_1.
\end{align*}
In the following, we denote the space of all infinitesimal Einstein deformation by $\epsilon(g)$ and the direct computation shows that
\begin{align*}
    \epsilon(g)=\{h: h\in\ker\divg, \kern0.5em \tr_gh=0, \kern0.5em \triangle h+2\mathring{R}(h)=0\}.
\end{align*}
An infinitesimal Einstein deformation $h$ is said integrable if there exists a $C^1$ curve of Einstein metrics $g(t)$ through $g=g_0$ such that $\frac{d}{dt}|_{t=0}g(t)=h$.

Given $h\in \epsilon(g)$, one of our questions is whether $h$ is integrable. To answer this question, \cite{B} Corollary 12.50  found that it is equivalent to check whether $h$ is formally integrable, i.e., if there exists $h_2,h_3,...$ such that $E(g(t))\equiv 0$ where
 \begin{align*}
       g(t)=g+th+\sum_{k=2}^\infty \frac{t^k}{k!}h_k.
\end{align*}
   
 Define the Bianchi operator $\beta_g$ by $\beta_g(h)=\divg h-\frac{1}{2}d\tr_gh$ where $h\in\Gamma(S^2M)$. It is not hard to see that $\beta_g(E(g))\equiv 0$ so the formal integrability is closely related to the Bianchi operator. In fact, it depends on the obstruction space $\ker\beta_g/ \im E_g'$. In \cite{B} Theorem 12.45, we have
\begin{align*}
    \ker\beta_g=\im E_g'\oplus \epsilon(g).
\end{align*}
Therefore, one sees that some infinitesimal deformations are not formally integrable. 

\subsection{Infinitesimal Generalized Solitonic Deformation}

\begin{defn}\label{D14}
    A steady gradient generalized Ricci soliton $\mathcal{G}_0(g_0,b_0)$ is called \emph{rigid} if there exists a neighborhood $\mathcal{U}\subset\mathcal{GM}$ such that for any steady gradient generalzied Ricci soliton $\mathcal{G}(g,b)\in\mathcal{U}$, there exists $(\varphi,B)\in\GDiff_{H_0}$ such that
    \begin{align*}
        (g,b)=\rho_{\mathcal{GM}}\Big((\varphi,B),(g_0,b_0)\Big)
    \end{align*}
where $\rho_{\mathcal{GM}}$ is given in (\ref{GA}).
\end{defn}
In other words, we say $\mathcal{G}_0$ is rigid if there exists some neighborhood $\mathcal{U}$ such that the generalized slice $\mathcal{S}_{\mathcal{G}_0}^{f_0}$ only contains one element $\mathcal{G}_0$. To study the local behavior, we define an operator
\begin{align*}
        \mathcal{R}: \quad &\mathcal{GM}\longrightarrow T^*M\otimes T^*M
        \\& \mathcal{G}(g,b)\longmapsto \Rc^{H,f}=\Rc-\frac{1}{4}H^2+\nabla^2f-\frac{1}{2}(d^*H+i_{\nabla}fH) 
    \end{align*}
where $f$ is the minimizer of $\lambda(g,b)$. Let's denote the space of steady gradient generalized Ricci solitons by $\mathcal{GRS}$ and we see that 
\begin{align*}
    \mathcal{GRS}=\mathcal{R}^{-1}(0).
\end{align*}

\begin{defn}\label{D15}
    Let $\mathcal{G}_0$ be a steady gradient generalized Ricci soliton. \emph{The premoduli space} of steady gradient generalized Ricci soliton at $\mathcal{G}_0$ is the set
    \begin{align*}
        \mathcal{P}_{\mathcal{G}_0}=\mathcal{GRS}\cap \mathcal{S}^{f_0}_{\mathcal{G}_0},
    \end{align*}
    where $ \mathcal{S}^{f_0}_{\mathcal{G}_0}$ is the generalized slice constructed in \Cref{T5}. 
\end{defn}

Locally, the map $\mathcal{R}$ is analytic and we are able to compute its derivative. By \Cref{P5}, we have the following results.

\begin{lemma}\label{L6}
 Let $\mathcal{A}$ denote the derivative of the operator $\mathcal{R}$. Then, $\mathcal{A}$ is given by   
\begin{align*}
 \mathcal{A}(\gamma)=-\frac{1}{2}\overline{\triangle}_f\gamma-\mathring{R}^+(\gamma)-\frac{1}{2}\overline{\divg}_f^*\overline{\divg}_f\gamma-(\nabla^+)^2u.
\end{align*}
where $u=\frac{\tr_gh}{2}-\phi$ and $\phi$ is the derivative of $f$ which is a function depending on $\gamma$. Moreover, $\mathcal{A}$ is a self-adjoint operator with respect to the inner product (\ref{6}).

\end{lemma}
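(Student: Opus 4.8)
\textbf{Proof proposal for Lemma \ref{L6}.}

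The plan is to derive the formula for $\mathcal{A}$ directly from the second variation computation already in hand, and then establish self-adjointness by inspecting each summand against the $f$-twisted inner product \eqref{6}. First I would recall that $\mathcal{R}(\mathcal{G}(g,b)) = \Rc^{H,f}$ with $f = f_{(g,b)}$ the minimizer of $\lambda$, which by \Cref{P4} depends analytically on $(g,b)$; hence $\mathcal{R}$ is differentiable and its derivative along a variation with $\frac{\partial}{\partial t}\big|_{t=0}(g_t-b_t)=\gamma$ and $\frac{\partial}{\partial t}\big|_{t=0} f_t = \phi$ is exactly $\frac{\partial}{\partial t}\big|_{t=0}\Rc^{H,f}$. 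By \Cref{P5} this equals $-\tfrac12\overline{\triangle}_f\gamma - \mathring{R}^+(\gamma) - \tfrac12\overline{\divg}_f^*\overline{\divg}_f\gamma - \tfrac12(\nabla^+)^2 u$ with $u = \tr_g h - 2\phi$. The only discrepancy with the claimed formula is the normalization of $u$: here one should set $u = \tfrac{\tr_g h}{2} - \phi$, so that $\tfrac12(\nabla^+)^2(\tr_g h - 2\phi) = (\nabla^+)^2 u$ with the new $u$; I would simply note this change of variable and record that by \eqref{Df} this $u$ solves $\triangle_f(2u) = \overline{\divg}_f\overline{\divg}_f\gamma$, i.e. $\phi$ (hence $u$) is determined by $\gamma$. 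This gives the displayed expression for $\mathcal{A}(\gamma)$.

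For self-adjointness with respect to $(\cdot,\cdot)_f$, I would treat the four terms in turn. The term $-\tfrac12\overline{\triangle}_f\gamma = \tfrac12\overline{\nabla}^{*_f}\overline{\nabla}\gamma$ is manifestly self-adjoint since it is of the form $\tfrac12 T^{*_f} T$ for $T = \overline{\nabla}$, by definition \eqref{triangle}. The term $-\tfrac12\overline{\divg}_f^*\overline{\divg}_f\gamma$ is likewise of the form $-\tfrac12 S^{*_f} S$ with $S = \overline{\divg}_f$, hence self-adjoint. For the curvature term $\mathring{R}^+(\gamma)$, I would use the symmetry $R^+_{iklj} = R^+_{ljik}$ (the pair symmetry of the Bismut curvature tensor, which holds by \Cref{P3}: $Rm^+(X,Y,Z,W)=Rm^+(Z,W,X,Y)$ since $dH=0$) to check that $\langle \mathring{R}^+(\gamma_1),\gamma_2\rangle = R^+_{iklj}(\gamma_1)_{ij}(\gamma_2)_{kl} = R^+_{ljik}(\gamma_1)_{ij}(\gamma_2)_{kl} = \langle\gamma_1,\mathring{R}^+(\gamma_2)\rangle$ pointwise, so it contributes a self-adjoint (indeed pointwise-symmetric) operator. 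The remaining term $-(\nabla^+)^2 u$ is the only nonlocal one, since $u$ depends on $\gamma$ through $\triangle_f(2u)=\overline{\divg}_f\overline{\divg}_f\gamma$.

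The main obstacle is precisely the self-adjointness of the nonlocal term $\gamma \mapsto -(\nabla^+)^2 u(\gamma)$. I would handle it by the standard ``potential'' argument: write $u(\gamma) = \tfrac12\triangle_f^{-1}\big(\overline{\divg}_f\overline{\divg}_f\gamma\big)$ on the space $\{\int_M u\, e^{-f}dV_g = 0\}$, where $\triangle_f^{-1}$ is self-adjoint with respect to \eqref{6} (as $\triangle_f$ is $f$-twisted self-adjoint). Then I need to identify the $f$-twisted adjoint of the second-order operator $\gamma \mapsto (\nabla^+)^2 u$ as a composition: one integration by parts moves $(\nabla^+)^2$ onto the test tensor producing an $\overline{\divg}_f\overline{\divg}_f$-type expression (this is where the definitions \eqref{div}, \eqref{div0} and \Cref{div*} enter, matching $(\nabla^+)_i(\nabla^+)_j$ against $\overline{\divg}_f$), and then self-adjointness of $\triangle_f^{-1}$ and of $\overline{\divg}_f\overline{\divg}_f$ (already used above) closes the loop. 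Concretely, for $\gamma_1,\gamma_2 \in \otimes^2 T^*M$ with associated potentials $u_1,u_2$, one gets $\big((\nabla^+)^2 u_1,\gamma_2\big)_f = \big(u_1, \overline{\divg}_f\overline{\divg}_f\gamma_2\big)_f = 2\big(u_1,\triangle_f u_2\big)_f = 2\big(\triangle_f u_1,u_2\big)_f = \big(\overline{\divg}_f\overline{\divg}_f\gamma_1, u_2\big)_f = \big(\gamma_1,(\nabla^+)^2 u_2\big)_f$, so this term is self-adjoint as well. Adding the four contributions yields that $\mathcal{A}$ is self-adjoint with respect to \eqref{6}, completing the proof. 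I expect the only subtlety to be bookkeeping the $f$-twisting consistently (all integrations by parts are against $e^{-f}dV_g$, which is exactly why $\divg_f$, $d_f^*$, $\triangle_f$ rather than their untwisted counterparts appear), together with confirming the pair symmetry of $R^+$ used for the curvature term.
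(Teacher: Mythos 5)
Your derivation of the formula part is exactly the paper's route: the paper proves \Cref{L6} only by citing \Cref{P5}, and your rescaling $u=\tfrac{\tr_g h}{2}-\phi$ (so that $\tfrac12(\nabla^+)^2(\tr_g h-2\phi)=(\nabla^+)^2u$ and, by (\ref{Df}), $\triangle_f(2u)=\overline{\divg}_f\overline{\divg}_f\gamma$) is the correct bookkeeping. Your treatment of the nonlocal term is also correct and goes beyond what the paper records: the identity $\big((\nabla^+)^2w,\gamma\big)_f=\big(w,\overline{\divg}_f\overline{\divg}_f\gamma\big)_f$ does follow from \Cref{div*} together with the $f$-twisted adjoint of (\ref{div0}) (one checks $\overline{\divg}_f^*\big(-\tfrac12\nabla w,-\tfrac12\nabla w\big)_{ij}=\tfrac12\big(\nabla^+_i\nabla_jw+\nabla^-_j\nabla_iw\big)=\nabla^+_i\nabla^+_jw$), and the chain through the self-adjointness of $\triangle_f$ closes the argument for that term.

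The one genuine flaw is your justification of the curvature term. The pair symmetry $Rm^+(X,Y,Z,W)=Rm^+(Z,W,X,Y)$ is \emph{false} for the Bismut connection in general: from the formula in \Cref{P3} and $dH=0$ one finds $Rm^+(X,Y,Z,W)-Rm^+(Z,W,X,Y)=\nabla_XH(Y,Z,W)-\nabla_YH(X,Z,W)$ (the correct first-Bianchi-type consequence of $dH=0$ is $Rm^+(X,Y,Z,W)=Rm^-(Z,W,X,Y)$), and this discrepancy vanishes only when $\nabla H=0$, e.g.\ in the Bismut-flat case — not for a general steady gradient generalized Ricci soliton, which is the setting of the lemma. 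Fortunately the conclusion you need survives: self-adjointness of $\gamma\mapsto\mathring{R}^+(\gamma)$ on all of $\otimes^2T^*M$ requires only $R^+_{ijkl}=R^+_{jilk}$, which follows from the two antisymmetries $R^+_{ijkl}=-R^+_{jikl}=-R^+_{ijlk}$; both hold for the curvature of any metric-compatible connection and can be read off directly from \Cref{P3}, with no pair symmetry needed. Replacing the pair-symmetry step by this observation repairs the proof. (Alternatively, self-adjointness of $\mathcal{A}$ as a whole can be deduced from its role as the Hessian of the analytic functional $\lambda$ at a critical point via \Cref{T6}.)
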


Notice that $\mathcal{A}$ is not an elliptic operator. Motivated by \Cref{P5}, we then define an elliptic operator $\mathcal{B}$ by 
\begin{align}
    \mathcal{B}(\gamma)\coloneqq-\frac{1}{2}\overline{\triangle}_f \gamma-\mathring{R}^+(\gamma).
\end{align}

\begin{defn}\label{D16}
Let $\mathcal{G}$ be a steady gradient generalized Ricci soliton. 
\begin{itemize}
    \item  A 2-tensor $\gamma\in \otimes^2 T^*M$ is called an \emph{infinitesimal generalized solitonic deformation} of $\mathcal{G}$ if $\mathcal{A}(\gamma)=0$.
    \item A 2-tensor $\gamma\in \otimes^2 T^*M$ is called an \emph{essential infinitesimal generalized solitonic deformation} of $\mathcal{G}$ if $\mathcal{B}(\gamma)=0$ and $\gamma$ is non-trivial.
\end{itemize}
In the following, we denote the set of all essential infinitesimal generalized solitonic deformations by $IGSD$, i.e.
\begin{align*}
    IGSD=\{\gamma\in\otimes^2 T^*M: \mathcal{B}(\gamma)=0 \text{ and $\gamma$ is non-trivial.}\}.
\end{align*}

\end{defn}

Next, we define the Bianchi operator $\beta_{\mathcal{G}}$ by
\begin{align*}
    \mathcal{\beta}_{\mathcal{G}}: \quad & \otimes^2 T^*M\longrightarrow T^*M\times T^*M
        \\& \gamma\longmapsto \overline{\divg}_f\gamma=\Big( (\nabla^+)^m\gamma_{ml}-\nabla_m f\gamma_{ml},  (\nabla^-)^m\gamma_{lm}-\nabla_m f\gamma_{lm} \Big).
\end{align*}
Recall the proof of \Cref{RTS}, if $\gamma=h-K$, we could also write
\begin{align*}
    \beta_{\mathcal{G}}(\gamma)=\Big((\divg_f h)_l+(d_f^*K)_l-\frac{1}{2}H_{mkl}K_{mk},(\divg_f h)_l-(d_f^*K)_l-\frac{1}{2}H_{mkl}K_{mk}\Big).
\end{align*}
\begin{lemma}\label{L7}
    For any generalized metric $\mathcal{G}$, 
    \begin{align*}
        \beta_{\mathcal{G}}\circ\mathcal{R}(\mathcal{G})\equiv 0
    \end{align*}
\end{lemma}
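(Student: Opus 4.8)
The plan is to establish a contracted second Bianchi identity for the twisted Bakry--Emery curvature $\Rc^{H,f}$ relative to the Bismut connections. Since $\beta_{\mathcal G}(\gamma)$ is the pair $\bigl((\nabla^+)^m\gamma_{ml}-\nabla_m f\,\gamma_{ml},\ (\nabla^-)^m\gamma_{lm}-\nabla_m f\,\gamma_{lm}\bigr)$, and since $\mathcal R(\mathcal G)=\Rc^{H,f}$ is an honest $2$-tensor (not symmetric, not antisymmetric in general), the claim $\beta_{\mathcal G}\circ\mathcal R\equiv 0$ amounts to the two scalar identities
\begin{align*}
(\nabla^+)^m\Rc^{H,f}_{ml}-\nabla_m f\,\Rc^{H,f}_{ml}=0,\qquad (\nabla^-)^m\Rc^{H,f}_{lm}-\nabla_m f\,\Rc^{H,f}_{lm}=0.
\end{align*}
So first I would split $\Rc^{H,f}$ into its symmetric part $S_{ij}=\Rc_{ij}-\tfrac14 H^2_{ij}+\nabla_i\nabla_j f$ and its antisymmetric part $A_{ij}=-\tfrac12(d^*H+i_{\nabla f}H)_{ij}$, and recall the well-known fact (from the generalized Ricci flow literature, e.g. \cite{GRF} Chapter 4, Proposition 4.30 or thereabouts) that $\Rc^+=\Rc-\tfrac14 H^2-\tfrac12 d^*H$ satisfies a Bismut-type contracted Bianchi identity; the $f$-twisting then contributes the $\nabla^2 f$ and $i_{\nabla f}H$ corrections, which is exactly the Bakry--Emery modification that makes the weighted divergence vanish.

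The key steps, in order: (1) Using $\mathrm{div}^{\pm}$ notation from the preliminaries, rewrite $(\nabla^+)^m\Rc^{H,f}_{ml}-\nabla_m f\,\Rc^{H,f}_{ml}$ as $\mathrm{div}^{+}_f(\Rc^{H,f})_l$; since $\Rc^{H,f}$ is $S+A$, use the identities from the proof of \Cref{RTS} relating $\mathrm{div}^{+}_f$ to $\mathrm{div}_f$, $d^*_f$, and an $H$-contraction term, to reduce the $\nabla^+$-divergence of $\Rc^{H,f}$ to ordinary Levi-Civita quantities plus torsion corrections. (2) Apply the classical contracted second Bianchi identity $\nabla^m\Rc_{ml}=\tfrac12\nabla_l R$ together with the Bochner/commutation formula $\nabla^m\nabla_m\nabla_l f-\nabla^m\nabla_l\nabla_m f=-\Rc_{lm}\nabla_m f$, i.e. $\mathrm{div}(\nabla^2 f)_l=\nabla_l\triangle f+\Rc_{lm}\nabla_m f$, and the weighted versions obtained by subtracting $\nabla_m f$ contractions. (3) Handle the $H^2$ and $d^*H$ terms: expand $\nabla^m H^2_{ml}$ and $\nabla^m(d^*H)_{ml}$ using $dH=0$ and the definition $H^2_{ij}=H_{iab}H_{jab}$; here one needs the identity $\nabla^m(d^*H)_{ml}=-\tfrac12\nabla_l|H|^2\cdot(\text{coefficient})+\dots$ type manipulations, and the fact that $d^*d^*H=0$. (4) Assemble: the Levi-Civita pieces reorganize into $\tfrac12\nabla_l(R-\tfrac1{12}|H|^2+2\triangle f-|\nabla f|^2)=\tfrac12\nabla_l R^{H,f}$ plus terms that cancel against the torsion corrections from step (1); crucially, because $R^{H,f}$ is \emph{constant} on a steady gradient generalized Ricci soliton — wait, this identity should hold for \emph{all} $\mathcal G$, so in fact the claim must be that the $\nabla_l R^{H,f}$-type term is itself cancelled by the remaining torsion/Bakry--Emery terms, as happens for the ordinary Bianchi-gauged Ricci tensor. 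Then repeat the computation with $\nabla^-$ acting on the second index, which by the symmetry $H\mapsto -H$, $\nabla^+\leftrightarrow\nabla^-$ of all the structures is formally identical.

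The main obstacle I anticipate is the bookkeeping in step (3)--(4): tracking the numerous $H\cdot\nabla H$ and $H\cdot H\cdot\nabla f$ cubic terms and confirming they cancel in pairs, since $\Rc^{H,f}$ is built from three different curvature-type objects ($\Rc$, $H^2$, $d^*H$) plus two $f$-terms, and the $\nabla^{\pm}$-divergence mixes them. A cleaner route, which I would try first, is conceptual: $\Rc^{H,f}$ is (up to sign and the antisymmetric $d^*H+i_{\nabla f}H$ piece) the $f$-weighted Bismut--Ricci tensor, and it is the gradient of $\mathcal F$ in the sense of \eqref{FV}; the generalized second Bianchi identity is then a consequence of the diffeomorphism- and $B$-field-invariance of $\mathcal F$ — differentiating $\mathcal F(\rho_{\mathcal{GM}}((\varphi_t,B_t),\mathcal G))\equiv\mathcal F(\mathcal G)$ along a one-parameter family of generalized gauge transformations and using \eqref{FV} forces the weighted divergence of $\Rc^{H,f}$ against any gauge direction to vanish, and the gauge directions are precisely $\overline{\divg}_f^*$-images, so $\overline{\divg}_f\Rc^{H,f}=0$. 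I would present the Noether-type argument as the main proof and relegate the direct tensor computation to a remark or omit it.
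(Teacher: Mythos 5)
Your preferred argument (the Noether-type one) is a genuinely different route from the paper's. The paper proves the lemma by direct computation: it splits $\Rc^{H,f}$ into the symmetric part $h=\Rc-\tfrac14H^2+\nabla^2f$ and the antisymmetric part $K=\tfrac12 d_f^*H$, computes $\divg_f h$ via the contracted second Bianchi identity, the commutation formula for $\nabla^2 f$, and $dH=0$, arriving at $\divg_f h=\tfrac12\nabla_l R^{H,f}+\tfrac14(d_f^*H)_{ab}H_{lab}$, and then kills the two pieces of $\beta_{\mathcal G}$ using (i) the constancy of $R^{H,f}=\lambda(g,b)$ and (ii) $d_f^*d_f^*H=0$. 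Your gauge-invariance argument, by contrast, derives the identity from the invariance of $\lambda$ (note: $\lambda$, not $\mathcal F$, is the gauge-invariant object) together with the first variation formula (\ref{FV}); it is cleaner and explains \emph{why} the identity holds, at the cost of having to verify that the orbit directions really exhaust $\im\overline{\divg}_f^*$ — this is the orthogonality underlying \Cref{T5} and can be checked from \Cref{div*}: $\overline{\divg}_f^*(u,u)$ produces the diffeomorphism-plus-induced-$B$-field directions and $\overline{\divg}_f^*(u,-u)$ the exact $B$-field directions. With that verification supplied, your main proof is sound.

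However, your fallback direct computation contains a genuine error at exactly the delicate point, and it is worth correcting because it reflects a misunderstanding of why the lemma is true. In step (4) you talk yourself out of using the constancy of $R^{H,f}$ on the grounds that the lemma "should hold for all $\mathcal G$," and you conclude instead that the $\tfrac12\nabla_l R^{H,f}$ term "must be cancelled by the remaining torsion/Bakry--Emery terms." It is not cancelled, and no amount of bookkeeping will make it cancel: for an \emph{arbitrary} function $f$ the weighted divergence of $\Rc^{H,f}$ equals $\tfrac12\nabla_l R^{H,f}$ plus the $d_f^*H$-contraction, and this is generically nonzero. The term vanishes because in the definition of the operator $\mathcal R$ the function $f$ is always the minimizer of $\lambda(g,b)$, and the Euler--Lagrange equation of the minimizer forces $R^{H,f}=R-\tfrac1{12}|H|^2+2\triangle f-|\nabla f|^2$ to equal the \emph{constant} $\lambda(g,b)$ for \emph{every} generalized metric, soliton or not. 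This is precisely how the paper closes the argument. (Your Noether argument secretly uses the same fact, since (\ref{FV}) is only valid with $f_t$ the minimizer along the family.) If you keep the direct computation even as a remark, you must restore this step; as written it would not close.
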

\begin{proof}
Recall that $\mathcal{R}(\mathcal{G})=\Rc-\frac{1}{4}H^2+\nabla^2f-\frac{1}{2}(d^*H+i_{\nabla f}H)$, let
\begin{align*}
    h=\Rc-\frac{1}{4}H^2+\nabla^2f, \quad K=\frac{1}{2}d^*_fH.
\end{align*}
We compute
\begin{align*}
    \divg_f(\Rc-\frac{1}{4}H^2+\nabla^2f)_l&=\nabla_i(R_{il}-\frac{1}{4}H^2_{il}+\nabla_i\nabla_lf)-\nabla_i f (R_{il}-\frac{1}{4}H^2_{il}+\nabla_i\nabla_lf)
    \\&=\frac{1}{2}\nabla_l(R-|\nabla f|^2)+\triangle\nabla_lf-\nabla_i f R_{il}-\frac{1}{4}\nabla_i(H_{iab}H_{lab})+\frac{1}{4}\nabla_i f  H_{il}^2
    \\&=\frac{1}{2}\nabla_l(R+2\triangle f-|\nabla f|^2)+\frac{1}{4}(d^*_fH)_{ab}H_{lab}-\frac{1}{4}H_{iab}\nabla_i H_{lab}.
\end{align*}
Using the fact that $dH=0$, we have
\begin{align*}
    H_{iab}\nabla_i H_{lab}&=H_{iab}(\nabla_lH_{iab}-\nabla_aH_{ilb}+\nabla_bH_{ila})
    \\&=\frac{1}{2}\nabla_l|H|^2-2H_{iab}\nabla_aH_{ilb}
    \\&=\frac{1}{2}\nabla_l|H|^2-2H_{iab}\nabla_iH_{alb}.
\end{align*}
So
\begin{align*}
     \divg_f(\Rc\frac{1}{4}H^2+\nabla^2f)_l=\frac{1}{2}\nabla_l(R-\frac{1}{12}|H|^2+2\triangle f-|\nabla f|^2)+\frac{1}{4}(d^*_fH)_{ab}H_{lab}.
\end{align*}
Recall that $\lambda=R-\frac{1}{12}|H|^2+2\triangle f-|\nabla f|^2$ is a constant, then $(\divg_fh)_l-\frac{1}{2}K_{ij}H_{ijl}=0$. Our result follows by the fact
\begin{align*}
    d^*_f(d^*_fH)=0.
\end{align*}

\end{proof}

\begin{proposition}\label{P6}
    For any steady gradient generalized Ricci soliton $\mathcal{G}$, we have a decomposition
    \begin{align*}
        \ker \beta_{\mathcal{G}}=\im\mathcal{A}\oplus IGSD.
    \end{align*}
\end{proposition}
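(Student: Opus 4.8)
The plan is to mimic the classical Berger--Ebin / Koiso decomposition argument, replacing the Levi-Civita Laplacian and divergence with their mixed-Bismut counterparts $\overline{\triangle}_f$ and $\overline{\divg}_f$, and relying on the self-adjointness of $\mathcal{A}$ established in \Cref{L6}. First I would observe that $\im \mathcal{A}$ and $IGSD$ are both contained in $\ker\beta_{\mathcal{G}}$: for $\im\mathcal{A}$ this follows by differentiating the identity $\beta_{\mathcal{G}}\circ\mathcal{R}\equiv 0$ of \Cref{L7} at the soliton $\mathcal{G}$ (since $\mathcal{R}(\mathcal{G})=0$ there, the product rule kills the term where $\beta$ is differentiated, leaving $\beta_{\mathcal{G}}\circ\mathcal{A}=\beta_{\mathcal{G}}\circ\mathcal{R}'_{\mathcal{G}}=0$); for $IGSD$ it is the definition, since a non-trivial tensor lies in $\ker\beta_{\mathcal{G}}=T_{\mathcal{G}}S^f_{\mathcal{G}}$ by \Cref{RTS}. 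So the inclusion $\im\mathcal{A}+IGSD\subseteq\ker\beta_{\mathcal{G}}$ is immediate, and it remains to prove the sum is direct and exhausts $\ker\beta_{\mathcal{G}}$.

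The main tool is elliptic theory applied to $\mathcal{A}$. Although $\mathcal{A}$ itself is not elliptic (the $\overline{\divg}_f^*\overline{\divg}_f$ and $(\nabla^+)^2u$ terms degenerate the symbol along the gauge directions), its restriction is controlled: on $\ker\beta_{\mathcal{G}}$ we have $\overline{\divg}_f\gamma=0$, and moreover $u=\tfrac12\tr_g\gamma-\phi$ satisfies $\triangle_f u = \overline{\divg}_f\overline{\divg}_f\gamma = \overline{\divg}_f(\beta_{\mathcal G}(\gamma))=0$ by \Cref{T6} and the normalization, hence $u$ is constant and $(\nabla^+)^2u=0$; thus on $\ker\beta_{\mathcal{G}}$ the operator $\mathcal{A}$ agrees with the elliptic operator $\mathcal{B}(\gamma)=-\tfrac12\overline{\triangle}_f\gamma-\mathring R^+(\gamma)$. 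Since $\mathcal B$ is elliptic, formally self-adjoint with respect to $(\cdot,\cdot)_f$, and acts on sections of a bundle over the compact $M$, standard Fredholm theory gives the $L^2_f$-orthogonal decomposition of $\Gamma(\otimes^2T^*M)$ into $\ker\mathcal B$ and $\im\mathcal B$, and, after restricting to the $\overline\nabla$-closed subspace $\ker\beta_{\mathcal{G}}$, a decomposition $\ker\beta_{\mathcal G}=\ker(\mathcal A|_{\ker\beta_{\mathcal G}})\oplus\big(\im\mathcal A\cap\ker\beta_{\mathcal G}\big)$. By definition $\ker(\mathcal A|_{\ker\beta_{\mathcal G}})=IGSD$, so the only remaining point is that $\im\mathcal A\cap\ker\beta_{\mathcal G}=\im\mathcal A$, i.e.\ that $\im\mathcal A$ already lies inside $\ker\beta_{\mathcal G}$ — which is exactly the first paragraph's inclusion.

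For directness of the sum, suppose $\mathcal{A}(\gamma_0)\in IGSD$ for some $\gamma_0$; then $\mathcal{A}(\gamma_0)$ is non-trivial, so it lies in $\ker\beta_{\mathcal G}$, where $\mathcal A=\mathcal B$ is self-adjoint, and $(\mathcal A\gamma_0,\mathcal A\gamma_0)_f=(\mathcal B(\mathcal A\gamma_0),\gamma_0')_f$... more cleanly: on $\ker\beta_{\mathcal G}$, $\im(\mathcal A|_{\ker\beta_{\mathcal G}})\perp_f\ker(\mathcal A|_{\ker\beta_{\mathcal G}})$ automatically by self-adjointness of $\mathcal B$, giving $\im\mathcal A\cap IGSD=\{0\}$. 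Putting these together yields $\ker\beta_{\mathcal G}=\im\mathcal A\oplus IGSD$. The step I expect to be the genuine obstacle is the interplay between ellipticity and the gauge constraint: one must be careful that the decomposition of $\Gamma(\otimes^2T^*M)$ coming from the elliptic operator $\mathcal B$ is compatible with the splitting into $\ker\beta_{\mathcal G}$ and its complement, i.e.\ that $\mathcal B$ (equivalently $\mathcal A$) preserves $\ker\beta_{\mathcal G}$, which in turn requires commuting $\beta_{\mathcal G}$ past $\overline{\triangle}_f$ and $\mathring R^+$ — a Bochner-type identity that should follow from $\beta_{\mathcal G}\circ\mathcal A=0$ together with the already-established analytic facts, but which is where the real bookkeeping lies.
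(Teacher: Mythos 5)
Your opening and closing paragraphs are fine, but the central step has a genuine gap — and it is precisely the one you flag at the end without resolving. You take the elliptic decomposition $\otimes^2T^*M=\ker\mathcal B\oplus\im\mathcal B$ and assert that it ``restricts'' to $\ker\beta_{\mathcal G}=\ker(\mathcal A|_{\ker\beta_{\mathcal G}})\oplus(\im\mathcal A\cap\ker\beta_{\mathcal G})$. This does not follow: if $\gamma\in\ker\beta_{\mathcal G}$ is written as $\gamma=\kappa+\mathcal B\sigma$ with $\kappa\in\ker\mathcal B$, neither summand need lie in $\ker\beta_{\mathcal G}$, because $\im\mathcal B\not\subset\ker\beta_{\mathcal G}$ (by \Cref{L8} one has $\beta_{\mathcal G}(\mathcal B\sigma)=\Phi(\overline{\divg}_f\sigma)$, which vanishes only when $\sigma$ itself is non-trivial) and $\ker\mathcal B$ need not consist of non-trivial tensors. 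Even granting that $\mathcal B$ preserves $\ker\beta_{\mathcal G}$, your argument only yields the image of the \emph{restricted} operator $\mathcal A|_{\ker\beta_{\mathcal G}}$, and identifying this with the full image $\im\mathcal A$ appearing in the statement requires an additional input (for instance, that $\mathcal A$ annihilates the orbit directions complementary to the slice), which you do not supply.

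The paper's proof sidesteps all of this by decomposing the \emph{full} space with respect to $\mathcal A$ rather than $\mathcal B$. Self-adjointness of $\mathcal A$ (\Cref{L6}) gives $\otimes^2T^*M=\ker\mathcal A\oplus\im\mathcal A$; writing $\gamma=\mathcal A(\gamma_1)+\gamma_2$ with $\gamma_2\in\ker\mathcal A$ and using $\im\mathcal A\subset\ker\beta_{\mathcal G}$ (\Cref{L7}, which you do establish correctly) forces $\beta_{\mathcal G}(\gamma_2)=\beta_{\mathcal G}(\gamma)=0$, so $\gamma_2$ is non-trivial; since $\mathcal A=\mathcal B$ on non-trivial tensors (your own correct observation), $\gamma_2\in IGSD$. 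Directness follows from $\im\mathcal A\cap IGSD\subset\im\mathcal A\cap\ker\mathcal A=\{0\}$. The moral is that $\mathcal A$, not $\mathcal B$, is the operator whose image sits inside $\ker\beta_{\mathcal G}$, so the global splitting must be taken with respect to $\mathcal A$; the cost of its non-ellipticity is confined to justifying $\ker\mathcal A\oplus\im\mathcal A$, not to any compatibility with the slice. To salvage your route you would need to prove the identity of \Cref{L8} and the vanishing of $\mathcal A$ on orbit directions; as written, the proposal is incomplete at its main step.
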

\begin{proof}
Since $\gamma$ is non-trivial if $\gamma\in IGSD$, it is obvious that $IGSD\subset \ker\beta_{\mathcal{G}}$. Also, \Cref{L7} implies that $\im\mathcal{A}\subset \ker\beta_{\mathcal{G}}$ since $\mathcal{R}(\mathcal{G})\equiv 0$. On the other hand, because $\mathcal{A}$ is self adjoint, we have
\begin{align*}
    \otimes^2T^*M=\ker\mathcal{A}\oplus \im\mathcal{A}.
\end{align*}
For any $\gamma\in\ker\beta_{\mathcal{G}}$, we write
\begin{align*}
    \gamma=\mathcal{A}(\gamma_1)+\gamma_2 \quad \text{where $\gamma_2\in\ker\mathcal{A}$.}
\end{align*}
Then,
\begin{align*}
    0=\beta_\mathcal{G}(\gamma)=\beta_\mathcal{G}(\gamma_2)
\end{align*}
which implies that $\gamma_2$ is non-trivial. Therefore, we finish the proof.
\end{proof}

\begin{lemma}\label{L8}
Define an elliptic operator
\begin{align*}
        \Phi: \quad &T^*M\oplus T^*M\longrightarrow T^*M\oplus T^*M
        \\& (u,v)\longmapsto (-\frac{1}{2}\triangle^+_f u,-\frac{1}{2}\triangle_f^-v).
\end{align*}
For any steady gradient generalized Ricci soliton $\mathcal{G}$ and $\gamma\in\otimes^2 T^*M$, we have
\begin{align*}
    \beta_\mathcal{G}(\mathcal{B}(\gamma))=\Phi(\overline{\divg}_f\gamma).
\end{align*}

\end{lemma}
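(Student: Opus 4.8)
\textbf{Proof proposal for \Cref{L8}.}

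The plan is to carry out a direct computation, tracking how the Bianchi operator $\beta_{\mathcal{G}}$ interacts with the elliptic operator $\mathcal{B}(\gamma)=-\tfrac12\overline{\triangle}_f\gamma-\mathring{R}^+(\gamma)$. Write $\delta\coloneqq\overline{\divg}_f$ for brevity. The key structural fact I would exploit is that $\mathcal{B}$ is built from the mixed Bismut connection $\overline{\nabla}$, and the divergence operators $(\nabla^+)^m\gamma_{ml}-\nabla_mf\gamma_{ml}$ and $(\nabla^-)^m\gamma_{lm}-\nabla_mf\gamma_{lm}$ appearing in $\beta_{\mathcal{G}}$ are exactly the two components of a $\overline{\nabla}$-type divergence (see the Remark after \eqref{div0}). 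So the computation should reduce to commuting the mixed Bismut connection past a $\overline{\nabla}$-Laplacian, which produces curvature terms; the claim is that those curvature terms are precisely cancelled by $\beta_{\mathcal{G}}(\mathring{R}^+(\gamma))$, leaving only the Bochner-type terms $-\tfrac12\triangle^{\pm}_f$ acting on each component of $\delta\gamma$.

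Concretely, I would proceed as follows. First, I would establish a commutation identity of the schematic form $(\nabla^+)^m\overline{\triangle}_f\gamma_{ml}=\overline{\triangle}'_f\big((\nabla^+)^m\gamma_{ml}\big)+(\text{curvature}\ast\gamma)_l$, where $\overline{\triangle}'_f$ is the appropriate rough Laplacian on $1$-forms (here $\triangle^+_f$), and similarly for the $(\nabla^-)^m\gamma_{lm}$ component with $\triangle^-_f$; this is just the second Bianchi identity / Ricci commutation formula adapted to the Bismut connections $\nabla^{\pm}$, using $\nabla H=0$ is \emph{not} assumed here so one must carry the $\nabla H$ and $H\ast H$ torsion terms. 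Second, I would compute $\beta_{\mathcal{G}}(\mathring{R}^+(\gamma))$ directly: using $(\mathring{R}^+\gamma)_{jk}=R^+_{ijkl}\gamma_{il}$ and taking the $\nabla^+$-divergence in the first slot (resp. $\nabla^-$ in the second), the contracted second Bianchi identity for the Bismut curvature $R^+$ converts $(\nabla^+)^j R^+_{ijkl}$ into Ricci-type terms, and on a steady gradient generalized Ricci soliton $\Rc^+=\Rc^{H,f}+\tfrac12\nabla^2 f+\cdots$ is controlled by \eqref{GRS}, so these terms organize themselves. Third, I would add the two contributions and check that the curvature residue from step one exactly cancels, and that the $f$-twist terms ($\nabla f\cdot\nabla$ pieces) assemble into the twisted Laplacians $\triangle^{\pm}_f$ acting on the components of $\delta\gamma$; the $\tfrac12$ factors are inherited directly from the definition of $\mathcal{B}$.

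The main obstacle I anticipate is the bookkeeping in step two: the contracted second Bianchi identity for a connection with nonzero torsion $H$ has extra terms involving $\nabla H$ and $H\ast R$, and one must verify that these, together with the soliton equation \eqref{GRS} and the closedness $dH=0$, conspire to produce no leftover zeroth-order terms beyond what appears in $\Phi(\delta\gamma)$ — in particular the potentially dangerous terms are those not proportional to $\delta\gamma$ but to $\gamma$ itself, and showing all such terms cancel is the crux. A useful sanity check along the way is the Bismut-flat case, where $R^+\equiv 0$ and $\nabla H=0$: then $\mathring{R}^+(\gamma)=0$, $\overline{\triangle}_f$ simplifies via \Cref{L1}, and the identity should collapse to the statement that $\delta$ intertwines $-\tfrac12\overline{\triangle}_f$ with $\Phi$, which one can verify quickly and use to pin down the constants. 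Once the flat case is confirmed, the general case is the same computation with the curvature and $\nabla H$ terms retained and tracked to cancellation.
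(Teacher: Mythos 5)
Your strategy---commuting $\overline{\divg}_f$ past $\overline{\triangle}_f$ and cancelling the resulting curvature and $\nabla H$ terms against $\beta_{\mathcal{G}}(\mathring{R}^+(\gamma))$ via a contracted second Bianchi identity for the torsionful connections---is not the route the paper takes, and as written it leaves the decisive step undone. The entire content of the lemma is the assertion that no zeroth-order terms in $\gamma$ itself (as opposed to terms in $\overline{\divg}_f\gamma$) survive the cancellation, and you explicitly defer this: ``showing all such terms cancel is the crux.'' On a general steady soliton one has $\nabla H\neq 0$ and $R^+\neq 0$, the contracted second Bianchi identity for $\nabla^{\pm}$ carries torsion corrections of the form $H\ast Rm^+$ and $\nabla H\ast\gamma$, and organizing all of these into exactly $\Phi(\overline{\divg}_f\gamma)$ is a substantial computation that the proposal plans but does not perform. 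Your Bismut-flat sanity check only pins down constants; it cannot certify the cancellation of terms that vanish identically in that case. So as a proof this is incomplete.

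The paper's actual argument bypasses all of this with a structural observation you did not use. By \Cref{L7}, $\beta_{\mathcal{G}}\circ\mathcal{R}\equiv 0$ for \emph{every} generalized metric; differentiating this identity along a curve through a soliton, where $\mathcal{R}(\mathcal{G})=0$ kills the term involving the variation of $\beta_{\mathcal{G}}$, gives $\beta_{\mathcal{G}}\circ\mathcal{A}=0$. Hence
\begin{align*}
\beta_{\mathcal{G}}(\mathcal{B}(\gamma))=\beta_{\mathcal{G}}\bigl(\mathcal{B}(\gamma)-\mathcal{A}(\gamma)\bigr),
\end{align*}
and by \Cref{P5} and \Cref{L6} the difference $\mathcal{B}-\mathcal{A}$ is the explicit low-order expression $\tfrac12\overline{\divg}_f^*\overline{\divg}_f\gamma+(\nabla^+)^2u$, built only from $\alpha$, $\zeta$ (the components of $\overline{\divg}_f\gamma$) and $u$, with $\divg_f\alpha=2\triangle_f u$ supplied by \Cref{L2}. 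Computing $\beta_{\mathcal{G}}$ of that expression needs only standard commutations on $1$-forms and $d_f^*\zeta=0$, and directly produces $-\tfrac12\triangle_f^{\pm}(\alpha\pm\zeta)=\Phi(\overline{\divg}_f\gamma)$; neither $\mathring{R}^+$ nor any Bianchi identity for $R^+$ ever enters. If you want to salvage your approach, the cleanest fix is to adopt this reduction: it converts the ``dangerous'' terms proportional to $\gamma$ that you worry about into terms that never appear.
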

\begin{proof}
By \Cref{L7}, we see that if $\mathcal{G}\in\mathcal{GRS}$, 
\begin{align*}
    \beta_{\mathcal{G}}\circ\mathcal{R}'_{\mathcal{G}}(\gamma)=0
\end{align*}
for any $\gamma\in\otimes^2T^*M$. From the definition, we see that
\begin{align*}
    (\mathcal{B}(\gamma)-\mathcal{A}(\gamma))_{ij}&=\frac{1}{2}(\overline{\divg}_f^*\overline{\divg}_f\gamma)_{ij}+\nabla^+_i\nabla^+_ju
    \\&=(\divg_f^*\alpha)_{ij}-\frac{1}{2}(d\zeta)_{ij}+\frac{1}{2}\alpha_l H_{lij}+\nabla_i\nabla_ju-\frac{1}{2}\nabla_lu H_{lij}
\end{align*}
where 
\begin{align*}
     \alpha_p= (\divg_f h)_p-\frac{1}{2}K_{kl}H_{klp}, \quad \zeta= d^*_fK, \quad \text{and} \quad u=\frac{\tr_gh}{2}-\phi.
\end{align*}
For convenience, let
\begin{align*}
    \Tilde{h}_{ij}=(\divg_f^*\alpha)_{ij}+\nabla_i\nabla_ju \quad \text{and}\quad \Tilde{K}_{ij}=\frac{1}{2}(d\zeta)_{ij}-\frac{1}{2}\alpha_lH_{lij}-\frac{1}{2}\nabla_lu H_{lij}.
\end{align*}
Therefore, 
\begin{align*}
     \beta_\mathcal{G}(\mathcal{B}(\gamma))=\beta_\mathcal{G}(\mathcal{B}(\gamma)-\mathcal{A}(\gamma))=\Big( (\divg_f \Tilde{h})_l+(d_f^*\Tilde{K})_l-\frac{1}{2}\Tilde{K}_{ab}H_{lab},\kern0.5em (\divg_f \Tilde{h})_l-(d_f^*\Tilde{K})_l-\frac{1}{2}\Tilde{K}_{ab}H_{lab}\Big).
\end{align*}
We compute
\begin{align*}
    \divg_f(\nabla^2u)_l&=\nabla^k(\nabla_k\nabla_l u)-\nabla_kf \nabla_k\nabla_lu
    \\&=\nabla_l\triangle u+R_{kl}\nabla_k u-\nabla_l(\nabla_k u\nabla_k f)+\nabla_ku \nabla_l\nabla_k f
    \\&=\nabla_l(\triangle_f u)+\frac{1}{4}H_{kl}^2\nabla_k u
    \\&=\frac{1}{2}\nabla_l(\divg_f \alpha)+\frac{1}{4}H_{kl}^2\nabla_k u.
\end{align*}
Also,
\begin{align*}
    (\divg_f\Tilde{h})_l&=\nabla^k(\divg^*_f\alpha)_{kl}-\nabla_k f(\divg^*_f\alpha)_{kl}+\frac{1}{2}\nabla_l(\divg_f \alpha)+\frac{1}{4}H_{kl}^2\nabla_k u
    \\&=-\frac{1}{2}\triangle \alpha_l-\frac{1}{8}H_{lm}^2\alpha_m-\nabla_k f (\divg^*_f\alpha)_{kl}-\frac{1}{2}\nabla_kf \nabla_l\alpha_k+\frac{1}{4}H_{lk}^2\nabla_ku
    \\&=-\frac{1}{2}\triangle_f \alpha_l-\frac{1}{8}H_{lm}^2\alpha_m+\frac{1}{4}H_{lk}^2\nabla_ku.
\end{align*}
Furthermore,
\begin{align*}
    \frac{1}{2}\Tilde{K}_{ij}H_{ijl}&=\frac{1}{4}(d\zeta)_{ij}H_{ijl}-\frac{1}{4}\alpha_p H^2_{pl}+\frac{1}{4}H_{pl}^2\nabla_pu,
    \\(d^*_f\Tilde{K})_l&=\frac{1}{2}(d^*_fd\zeta)_l+\frac{1}{2}\nabla_m\alpha_p H_{pml}.
\end{align*}
Here, we note that by \Cref{L2} 
\begin{align*}
    \divg_f\alpha=2\triangle_f u.
\end{align*}
In addition,
\begin{align*}
    (d_f^*d\zeta)_l&=-\nabla^m(d\zeta)_{ml}+\nabla_mf(d\zeta)_{ml}
    \\&=-\nabla^m(\nabla_m\zeta_l-\nabla_l\zeta_m)+\nabla^m f(\nabla_m\zeta_l-\nabla_l\zeta_m)
    \\&=-\triangle_f\zeta_l+\nabla_m\nabla_l\zeta_m-\nabla_mf\nabla_l\zeta_m
    \\&=-\triangle_f\zeta_l+\nabla_l\nabla_m\zeta_m+R_{lm}\zeta_m-\nabla_mf\nabla_l\zeta_m
    \\&=-\triangle_f\zeta_l+\frac{1}{4}H_{lm}^2\zeta_m,
\end{align*}
where we use the fact that $d_f^*\zeta=0$. Therefore, we compute that 
\begin{align*}
    (\divg_f\Tilde{h})_l-\frac{1}{2}\Tilde{K}_{ij}H_{ijl}\pm (d^*_f\Tilde{K})_l&=-\frac{1}{2}\triangle_f \alpha_l+\frac{1}{8}H_{lm}^2\alpha_m-\frac{1}{4}(d\zeta)_{ij}H_{ijl}\pm (\frac{1}{2}(d^*_fd\zeta)_l+\frac{1}{2}\nabla_m\alpha_p H_{pml})
    \\&=-\frac{1}{2}\triangle_f \alpha_l+\frac{1}{8}H_{lm}^2\alpha_m-\frac{1}{2}\nabla_i\zeta_j H_{ijl}\pm (-\frac{1}{2}\triangle_f\zeta_l+\frac{1}{8}H_{lm}^2\zeta_m+\frac{1}{2}\nabla_m\alpha_p H_{pml})
    \\&=-\frac{1}{2}\triangle_f(\alpha_l\pm\zeta_l)+\frac{1}{8}H_{lm}^2(\alpha_m\pm\zeta_m)-\frac{1}{2}\nabla_i(\zeta_j\pm \alpha_j)H_{ijl}.
\end{align*}
By definition, $u_l=\alpha_l+\zeta_l$ and $v_l=\alpha_l-\zeta_l$. We note that 
\begin{align*}
    \triangle_f^+ u_l=\triangle_fu_l+H_{mkl}\nabla_mu_k-\frac{1}{4}H_{jl}^2 u_j, \quad  \triangle_f^- v_l=\triangle_fv_l-H_{mkl}\nabla_mv_k+\frac{1}{4}H_{jl}^2 v_j
\end{align*}
so the lemma follows.
\end{proof}

\subsection{Integrability}

\textbf{Notations:} In the following, we write the formal power series expansion of generalized metrics $\mathcal{G}(t)=\mathcal{G}(g(t),b(t))$ starting at $\mathcal{G}(g,b)$ by
\begin{align*}
    \mathcal{G}(t)=\mathcal{G}+\sum_{l=1}^\infty \frac{\gamma_l}{l!}t^l.
\end{align*}
It means that $(g(t),b(t))$ is expanded by
\begin{align*}
    g(t)=g+\sum_{l=1}^\infty \frac{h_l}{l!}t^l \quad \text{and}\quad b(t)=b+\sum_{l=1}^\infty \frac{K_l}{l!}t^l
\end{align*}
where $h_l\in \Gamma(S^2M)$, $K_l\in \Omega^2$ and $\gamma_l=h_l-K_l$ for $l=1,2,3,...$. In other words, this notation means that if 
\begin{align*}
    h_l=\frac{d^l}{dt^l}\Big|_{t=0} g(t)  \quad \text{and}\quad  K_l=\frac{d^l}{dt^l}\Big|_{t=0} b(t), 
\end{align*}
we denote
\begin{align*}
    \gamma_l=\frac{d^l}{dt^l}\Big|_{t=0} \mathcal{G}(t). 
\end{align*}

\begin{defn}\label{D17}
Let $\mathcal{G}$ be a steady gradient generalized Ricci soliton. 
\begin{itemize}
    \item An essential infinitesimal generalized solitonic deformation $\gamma_1$ is \emph{formally integrable} if there exists 2-tensors $\gamma_2,\gamma_3,....$ such that the formal power series $\mathcal{G}(t)=\mathcal{G}+\sum_{k=1}^\infty \frac{\gamma_k}{k!}t^k$ satisfies 
    \begin{align*}
        \mathcal{R}(\mathcal{G}(t))\equiv0
    \end{align*}
    If we denote the formal power series of $\mathcal{R}(\mathcal{G}(t))$ by $\mathcal{R}(\mathcal{G}(t))=\mathcal{R}(\mathcal{G})+\sum_{k=1}^\infty \frac{\mathcal{R}_\mathcal{G}^{(k)}(\gamma_1,\gamma_2,...,\gamma_k)}{k!}t^k$. It is equivalent to say that $\mathcal{R}_\mathcal{G}^{(k)}(\gamma_1,\gamma_2,...,\gamma_k)=0$ for all $k=1,2,....$ In particular, if there exists $\gamma_2,\gamma_3,...,\gamma_l$ for some finite integer $l$ such that $\mathcal{R}_\mathcal{G}^{(m)}(\gamma_1,\gamma_2,...,\gamma_m)=0$ for $m=1,2,...,l$, then we say that $\gamma_1$ is \emph{formally integrable} up to order $l$.
\item An essential infinitesimal generalized solitonic deformation $\gamma$ is \emph{integrable} if there exists a curve of steady gradient generalized Ricci solitons $\mathcal{G}(t)$ with $\mathcal{G}(0)=\mathcal{G}$ and $\frac{d}{dt}|_{t=0}\mathcal{G}(t)=\gamma$.

\end{itemize}
\end{defn}

\begin{theorem}\label{T7}
    Let $\mathcal{G}(g,b)$ be a steady gradient generalized Ricci soliton. There exists a neighborhood $\mathcal{U}$ of $\mathcal{G}(g,b)$ in the slice $\mathcal{S}^f_{\mathcal{G}}$ and a 
    finite-dimensional real analytic submanifold $\mathcal{Z}\subset \mathcal{U}$ such that 
    \begin{itemize}
        \item $T_{\mathcal{G}}(\mathcal{Z})=IGSD$
        \item $\mathcal{Z}$ contains the premoduli space $\mathcal{P}_{\mathcal{G}}$ as a real analytic subset.
    \end{itemize}
\end{theorem}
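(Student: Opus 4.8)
The plan is to realize $\mathcal{Z}$ as the Kuranishi set of a Lyapunov--Schmidt reduction performed on the generalized slice. All of the geometric input has already been assembled in \Cref{L6}, \Cref{L7}, \Cref{L8} and \Cref{P6}; what remains is the standard real-analytic deformation argument of \cite{B} (cf.\ also \cite{K3,K4}). Fix a large H\"older exponent and pass to $C^{k,\alpha}$-completions. By \Cref{P4} the minimizer $f=f_{(g,b)}$ depends real-analytically on $(g,b)$, so $\mathcal{R}$ is a real-analytic map on a $C^{k,\alpha}$-neighborhood of $\mathcal{G}$ in $\mathcal{GM}$, and by \Cref{T5} (taken with a real-analytic slice, as in the Einstein case) $\mathcal{S}^f_{\mathcal{G}}$ is a real-analytic submanifold with $T_{\mathcal{G}}\mathcal{S}^f_{\mathcal{G}}=\ker\beta_{\mathcal{G}}=\ker\overline{\divg}_f$. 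Write $\mathcal{R}_0:=\mathcal{R}|_{\mathcal{S}^f_{\mathcal{G}}}$.

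The key step is to analyze the linearization of $\mathcal{R}_0$. For $\gamma\in\ker\beta_{\mathcal{G}}$ one has $\overline{\divg}_f\gamma=0$, hence $\overline{\divg}_f^*\overline{\divg}_f\gamma=0$ and, by the remark following \Cref{L2}, $u=\tfrac12\tr_g\gamma-\phi=0$; therefore \Cref{L6} shows that $\mathcal{A}$ and $\mathcal{B}$ agree on $\ker\beta_{\mathcal{G}}$, so $(\mathcal{R}_0)'_{\mathcal{G}}=\mathcal{A}|_{\ker\beta_{\mathcal{G}}}=\mathcal{B}|_{\ker\beta_{\mathcal{G}}}$. Differentiating the identity $\beta_{\mathcal{G}(t)}\circ\mathcal{R}(\mathcal{G}(t))\equiv0$ of \Cref{L7} at the soliton $\mathcal{G}$, where $\mathcal{R}(\mathcal{G})=0$, yields $\beta_{\mathcal{G}}\circ\mathcal{A}=0$, so $\mathcal{B}|_{\ker\beta_{\mathcal{G}}}$ in fact maps $\ker\beta_{\mathcal{G}}$ into itself; denote this restriction by $\mathcal{B}_0\colon\ker\beta_{\mathcal{G}}\to\ker\beta_{\mathcal{G}}$. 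By \Cref{L6} the operator $\mathcal{B}_0$ is formally self-adjoint for $(\cdot,\cdot)_f$; its kernel is by definition $IGSD$, which is finite-dimensional since $\mathcal{B}$ is elliptic on the compact manifold $M$; and by \Cref{P6} one has the $(\cdot,\cdot)_f$-orthogonal splitting $\ker\beta_{\mathcal{G}}=\im\mathcal{A}\oplus IGSD$, on which $\mathcal{B}_0$ kills the $IGSD$-summand and restricts to an isomorphism of $\im\mathcal{A}=\im\mathcal{B}_0$. Hence $\mathcal{B}_0$ is a self-adjoint Fredholm operator of index $0$ with kernel exactly $IGSD$. (It is \Cref{L8} that makes precise the principle that the failure of $\mathcal{A}$ to be elliptic is ``purely Bianchi'': the discrepancy $\mathcal{B}-\mathcal{A}$ is supported on $\im\overline{\divg}_f^*$, which is transverse to the slice.)

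Now perform the reduction. Choose a real-analytic parametrization $(x,y)\mapsto\mathcal{G}(x,y)$ of a neighborhood of $\mathcal{G}$ in $\mathcal{S}^f_{\mathcal{G}}$ by $IGSD\times\im\mathcal{B}_0$, with $\mathcal{G}(0,0)=\mathcal{G}$ and differential the identity at the origin. Let $\pi$ denote the $(\cdot,\cdot)_f$-orthogonal projection of $\otimes^2T^*M$ onto the closed subspace $\im\mathcal{B}_0$, and consider the real-analytic map $F(x,y):=\pi\big(\mathcal{R}_0(\mathcal{G}(x,y))\big)$ with values in $\im\mathcal{B}_0$. Since $\partial_yF(0,0)=\pi\circ\mathcal{B}_0|_{\im\mathcal{B}_0}=\mathcal{B}_0|_{\im\mathcal{B}_0}$ is an isomorphism, the real-analytic implicit function theorem (in the relevant H\"older spaces, using the elliptic estimates for $\mathcal{B}_0$) produces a real-analytic map $x\mapsto y(x)$ with $y(0)=0$ and $F(x,y(x))\equiv0$; since $(\mathcal{R}_0)'_{\mathcal{G}}$ vanishes on the $IGSD$-factor we get $y'(0)=0$. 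Put $\mathcal{Z}:=\{\mathcal{G}(x,y(x)):|x|<\varepsilon\}$, a finite-dimensional real-analytic submanifold of a neighborhood $\mathcal{U}$ of $\mathcal{G}$ in $\mathcal{S}^f_{\mathcal{G}}$ with $T_{\mathcal{G}}\mathcal{Z}=IGSD$ (and, by elliptic regularity for $\mathcal{B}$ together with the analyticity of $\mathcal{R}$, consisting of smooth generalized metrics, independently of $k$). This is the first bullet. For the second, if $\mathcal{G}'\in\mathcal{P}_{\mathcal{G}}=\mathcal{GRS}\cap\mathcal{S}^f_{\mathcal{G}}$ lies in $\mathcal{U}$, then $\mathcal{R}(\mathcal{G}')=0$ forces $F=0$ at the corresponding parameter, so by uniqueness in the implicit function theorem $y=y(x)$ and $\mathcal{G}'\in\mathcal{Z}$; hence $\mathcal{P}_{\mathcal{G}}\subset\mathcal{Z}$ near $\mathcal{G}$, and $\mathcal{P}_{\mathcal{G}}\cap\mathcal{Z}=(\mathcal{R}|_{\mathcal{Z}})^{-1}(0)$ is the zero locus of a real-analytic map on the finite-dimensional real-analytic manifold $\mathcal{Z}$, hence a real-analytic subset of $\mathcal{Z}$ (a zero locus of a real-analytic map being locally cut out by finitely many equations, by Noetherianity of germs of real-analytic functions).

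I expect the crux to be the second paragraph: proving that the slice-restricted linearization $\mathcal{B}_0$ is self-adjoint, Fredholm, and has kernel \emph{exactly} $IGSD$ rather than the (generally larger) space $\ker\mathcal{B}$. This is where \Cref{L6}, \Cref{L7}, \Cref{L8} and \Cref{P6} enter essentially; once it is established, the Kuranishi construction is routine. A secondary technical point, as in Koiso's treatment of Einstein metrics, is the need for a \emph{real-analytic} version of the slice in \Cref{T5}, without which $\mathcal{R}_0$ would only be smooth and the analytic implicit function theorem would be unavailable.
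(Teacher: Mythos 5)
Your proposal is correct in substance and arrives at the theorem by unpacking the very black box that the paper keeps closed. The paper's proof is short: it invokes Koiso's Lemma 13.6 (via \cite{Ko4}), which hands over the finite-dimensional real analytic submanifold $\mathcal{Z}$ once one verifies that $\{\mathcal{R}'_{\mathcal{G}}(\gamma):\gamma\in T_{\mathcal{G}}\mathcal{S}^f_{\mathcal{G}}\}=\{\mathcal{B}(\gamma):\gamma\in T_{\mathcal{G}}\mathcal{S}^f_{\mathcal{G}}\}$ is closed; the entire effort then goes into that closedness statement, using the identity $\beta_{\mathcal{G}}(\mathcal{B}(\gamma))=\Phi(\overline{\divg}_f\gamma)$ of \Cref{L8} to trap the image inside $\ker\beta_{\mathcal{G}}\cap\im\mathcal{B}$ and a finite-codimension argument (following Besse, p.~119) to conclude. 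You instead carry out the Lyapunov--Schmidt/Kuranishi reduction explicitly, deriving the needed Fredholm structure of $\mathcal{B}_0=\mathcal{B}|_{\ker\beta_{\mathcal{G}}}$ from the splitting $\ker\beta_{\mathcal{G}}=\im\mathcal{A}\oplus IGSD$ of \Cref{P6}, and relegating \Cref{L8} to a parenthetical. Both are faces of the same argument; yours is self-contained where the paper delegates, and the paper is more careful exactly where you are quickest. The one caveat: your claim that $\mathcal{B}_0$ is Fredholm with closed range leans entirely on \Cref{P6}, but the proof of \Cref{P6} itself uses the decomposition $\otimes^2T^*M=\ker\mathcal{A}\oplus\im\mathcal{A}$ for the \emph{non-elliptic} operator $\mathcal{A}$, whose closed-range property is precisely what the paper's \Cref{L8}-based argument in this proof is designed to supply. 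So if you want your version to stand on its own rather than on the paper's ordering of results, you should fold in the paper's closedness argument (or an equivalent one) rather than citing \Cref{P6} for it; as written you are formally entitled to \Cref{P6}, but the analytic weight of the theorem is hiding there. Your closing remarks about needing a real-analytic version of the slice and about $\mathcal{A}$ vanishing on orbit directions (so that $\im\mathcal{A}=\im\mathcal{B}_0$) are apt; neither is addressed explicitly in the paper either.
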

\begin{proof}
Due to \cite{Ko4} Lemma 13.6, it suffices to  show that $\{\mathcal{R}'_{\mathcal{G}}(\gamma): \gamma\in T_{\mathcal{G}}\mathcal{S}_{\mathcal{G}}^f\}$ is closed. Note that 
\begin{align*}
    \{\mathcal{R}'_{\mathcal{G}}(\gamma): \gamma\in T_{\mathcal{G}}\mathcal{S}_{\mathcal{G}}^f\}= \{\mathcal{B}(\gamma): \gamma\in T_{\mathcal{G}}\mathcal{S}_{\mathcal{G}}^f\}.
\end{align*}
We aim to check that the set $\{\mathcal{B}(\gamma): \gamma\in T_{\mathcal{G}}\mathcal{S}_{\mathcal{G}}^f\}$ is closed. By \Cref{L8}, we see that
\begin{align*}
    \beta_\mathcal{G}(\mathcal{B}(\gamma))=\Phi(\overline{\divg}_f\gamma).
\end{align*}
If $\gamma\in T_{\mathcal{G}}\mathcal{S}_{\mathcal{G}}^f$, $\overline{\divg}_f\gamma=0$ so 
\begin{align*}
    \{\mathcal{B}(\gamma): \gamma\in T_{\mathcal{G}}\mathcal{S}_{\mathcal{G}}^f\}\subseteq \ker\beta\cap \im\mathcal{B}.
\end{align*}
Note that $\mathcal{B}$ is an elliptic operator then $\ker\beta\cap \im\mathcal{B}$ is a closed subset in $\ker\beta$. On the other hand, for any $\mathcal{B}(\gamma)\in \ker\beta$, we can decompose $\gamma=\gamma_1+\gamma_2$ where $\gamma_1\in T_{\mathcal{G}}\mathcal{S}_{\mathcal{G}}^f$ and $\gamma_2\in T_{\mathcal{G}}\mathcal{O}_{\mathcal{G}}$ by the generalized slice theorem. Then,
\begin{align*}
    0=\beta(\mathcal{B}(\gamma))=\Phi(\overline{\divg}_f\gamma_2).
\end{align*}
Since $\Phi$
is elliptic, it implies that $\{\mathcal{B}(\gamma): \gamma\in T_{\mathcal{G}}\mathcal{S}_{\mathcal{G}}^f\}$ has finite codimension in the closed subset $\ker\beta\cap \im\mathcal{B}$. Follow the argument in \cite{10.2307/j.ctt1bd6k0x} page 119, one can prove that $\{\mathcal{B}(\gamma): \gamma\in T_{\mathcal{G}}\mathcal{S}_{\mathcal{G}}^f\}$ is closed.

\end{proof}

\begin{corollary}\label{C5}
Let $\mathcal{G}$ be a steady gradient generalized Ricci soliton. An essential infinitesimal generalized solitonic deformation is integrable if and only if it is formally integrable.    
\end{corollary}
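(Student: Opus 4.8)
The plan is to obtain \Cref{C5} as a consequence of the finite-dimensional reduction in \Cref{T7}, combined with M.~Artin's approximation theorem for real analytic equations; this is the exact analogue of Koiso's treatment of the Einstein case (see \cite{B} and \cite{Ko4}). The implication integrable $\Rightarrow$ formally integrable is immediate: if $\mathcal{G}(t)$ is a curve of steady gradient generalized Ricci solitons with $\mathcal{G}(0)=\mathcal{G}$ and $\frac{d}{dt}\big|_{0}\mathcal{G}(t)=\gamma$, then $\mathcal{R}(\mathcal{G}(t))\equiv0$, and differentiating this identity $k$ times at $t=0$ with $\gamma_k=\frac{d^k}{dt^k}\big|_{0}\mathcal{G}(t)$ gives $\mathcal{R}_{\mathcal{G}}^{(k)}(\gamma_1,\dots,\gamma_k)=0$ for all $k$ (with $\gamma_1=\gamma$), i.e. formal integrability in the sense of \Cref{D17}. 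There is no regularity issue: steady gradient generalized Ricci solitons are smooth by elliptic regularity, and by the analyticity statement \Cref{P4} the curve may be taken real analytic.

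For the converse, suppose $\gamma$ is formally integrable, witnessed by a formal power series $\mathcal{G}(t)=\mathcal{G}+\sum_{k\geq1}\frac{\gamma_k}{k!}t^k$ with $\gamma_1=\gamma$ and $\mathcal{R}(\mathcal{G}(t))\equiv0$. First I would gauge-fix: using the Bianchi identity $\beta_{\mathcal{G}'}\circ\mathcal{R}\equiv0$ (\Cref{L7}) and the generalized slice theorem, apply a formal family of generalized gauge transformations, normalized to the identity at first order --- which is possible since $\gamma\in IGSD\subset T_{\mathcal{G}}\mathcal{S}^f_{\mathcal{G}}$ by \Cref{D16} and \Cref{RTS} --- to move the formal curve into the slice $\mathcal{S}^f_{\mathcal{G}}$; since $\mathcal{R}$ is $\GDiff_{H_0}$-invariant, the transformed formal curve still solves $\mathcal{R}\equiv0$ and has the same first-order term. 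Next I invoke the Kuranishi-type picture underlying \Cref{T7}: by the self-adjoint splitting $\otimes^2T^*M=\ker\mathcal{A}\oplus\im\mathcal{A}$ of \Cref{P6}, the submanifold $\mathcal{Z}$ is the graph, inside the slice, on which the $\im\mathcal{A}$-component of $\mathcal{R}$ vanishes, and on $\mathcal{Z}$ the operator $\mathcal{R}$ reduces to a real analytic map $\Psi\colon\mathcal{Z}\to IGSD$ (namely $\mathcal{R}|_{\mathcal{Z}}$ followed by the $f$-twisted $L^2$-projection onto the finite-dimensional space $IGSD$) with $\mathcal{P}_{\mathcal{G}}=\Psi^{-1}(0)$ and $T_{\mathcal{G}}\mathcal{Z}=IGSD$. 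Every solution of $\mathcal{R}=0$ has vanishing $\im\mathcal{A}$-component, so the gauge-fixed formal curve lies in $\mathcal{Z}$ and there solves the finite-dimensional real analytic system $\Psi(\mathcal{G}(t))\equiv0$ formally, with $\frac{d}{dt}\big|_{0}\mathcal{G}(t)=\gamma$. Applying Artin's approximation theorem to $\Psi$ on the finite-dimensional real analytic manifold $\mathcal{Z}$ yields a convergent real analytic curve $\widetilde{\mathcal{G}}(t)$ with $\Psi(\widetilde{\mathcal{G}}(t))=0$ and $\widetilde{\mathcal{G}}(t)\equiv\mathcal{G}(t)\ (\mathrm{mod}\ t^2)$, hence $\widetilde{\mathcal{G}}'(0)=\gamma$; then $\widetilde{\mathcal{G}}(t)\in\Psi^{-1}(0)=\mathcal{P}_{\mathcal{G}}\subset\mathcal{GRS}$ is an honest curve of steady gradient generalized Ricci solitons tangent to $\gamma$, so $\gamma$ is integrable.

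The main obstacle is the middle step --- the formal Kuranishi reduction --- where one must check that the formal gauge-fixing and the formal implicit function theorem defining $\mathcal{Z}$ are mutually compatible at the level of power series, and that the reduced obstruction $\Psi$ genuinely lands in the finite-dimensional space $IGSD$; the latter rests on the ellipticity of $\mathcal{B}$ together with the decomposition $\ker\beta_{\mathcal{G}}=\im\mathcal{A}\oplus IGSD$ of \Cref{P6}. Once this reduction is in place, the invocation of Artin approximation --- equivalently, the standard fact that a formal curve in a real analytic set germ is a limit of convergent curves with a prescribed $1$-jet --- is routine, and the argument concludes exactly as in \cite{Ko4} and \cite{B}.
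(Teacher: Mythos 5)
Your proposal is correct and follows exactly the route the paper intends: the paper states \Cref{C5} without proof as an immediate consequence of the finite-dimensional real analytic reduction in \Cref{T7}, deferring to the classical Koiso/Besse argument (\cite{B}, Corollary 12.50) via Artin approximation, which is precisely what you spell out. Your write-up in fact supplies more detail than the paper does (the gauge-fixing of the formal series into the slice and the identification of the reduced obstruction map $\Psi$), and I see no gap in it.
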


\begin{corollary}\label{C6}
Let $\mathcal{G}$ be a steady gradient generalized Ricci soliton. $\mathcal{G}$ is rigid if every essential infinitesimal solitonic deformation at $\mathcal{G}$ is integrable up to finite order.  
\end{corollary}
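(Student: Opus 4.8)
The plan is to run the standard Koiso-type rigidity argument, now in the generalized setting and based on \Cref{T7} and \Cref{C5}. First note that by \Cref{D14} and \Cref{D15}, $\mathcal{G}$ fails to be rigid exactly when it is not an isolated point of the premoduli space $\mathcal{P}_{\mathcal{G}}=\mathcal{GRS}\cap\mathcal{S}^{f}_{\mathcal{G}}$. Assume, for contradiction, that $\mathcal{G}$ is not rigid. By \Cref{T7}, $\mathcal{P}_{\mathcal{G}}$ is a real-analytic subset of the finite-dimensional real-analytic submanifold $\mathcal{Z}\subset\mathcal{S}^{f}_{\mathcal{G}}$ with $T_{\mathcal{G}}\mathcal{Z}=IGSD$. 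Since $\mathcal{G}$ is a non-isolated point of the finite-dimensional real-analytic set $\mathcal{P}_{\mathcal{G}}$, the curve selection lemma for real-analytic sets produces a nonconstant real-analytic curve $t\mapsto\mathcal{G}(t)=\mathcal{G}(g_{t},b_{t})$, $t\in[0,\varepsilon)$, with $\mathcal{G}(0)=\mathcal{G}$ and $\mathcal{G}(t)\in\mathcal{GRS}\cap\mathcal{S}^{f}_{\mathcal{G}}$ for every $t$.

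Next I would extract the leading-order term of this curve as a nonzero infinitesimal generalized solitonic deformation. Write the convergent expansion $\mathcal{G}(t)=\mathcal{G}+\sum_{l\ge1}\frac{t^{l}}{l!}\gamma_{l}$ and let $m\ge1$ be minimal with $\gamma_{m}\ne0$. Realizing $\mathcal{Z}$ near $\mathcal{G}$ as a graph over its tangent space $T_{\mathcal{G}}\mathcal{Z}=IGSD$ and using that $\mathcal{G}(t)$ stays in $\mathcal{Z}$ (as $\mathcal{P}_{\mathcal{G}}\subset\mathcal{Z}$), the lowest-order coefficient $\gamma_{m}$ of $\mathcal{G}(t)-\mathcal{G}$ lies in $IGSD$; in particular $\gamma_{m}$ is non-trivial (cf. \Cref{RTS}) and $\mathcal{B}(\gamma_{m})=0$. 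One can also see this directly: comparing the coefficients of $t^{m}$ in $0=\mathcal{R}(\mathcal{G}(t))=\mathcal{R}(\mathcal{G})+\frac{t^{m}}{m!}\mathcal{R}'_{\mathcal{G}}(\gamma_{m})+O(t^{m+1})$ gives $\mathcal{R}'_{\mathcal{G}}(\gamma_{m})=\mathcal{A}(\gamma_{m})=0$, and $\mathcal{A}$ coincides with $\mathcal{B}$ on non-trivial tensors by \Cref{P5}.

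Finally I would turn this into a contradiction with the hypothesis. The curve $\mathcal{G}(t)$ is a genuine one-parameter family of steady gradient generalized Ricci solitons whose first non-vanishing jet is $\gamma_{m}$. Unwinding the construction behind \Cref{T7}, $\mathcal{P}_{\mathcal{G}}$ appears inside $\mathcal{Z}$ as the zero set within $IGSD$ of a real-analytic map with vanishing differential at the origin, so the leading-order term of any arc in $\mathcal{P}_{\mathcal{G}}$ through $\mathcal{G}$ is realized as the linear term of a formal deformation by solitons; applied to $\mathcal{G}(t)$, this makes $\gamma_{m}$ formally integrable, hence integrable by \Cref{C5}, hence integrable to all orders. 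This contradicts the assumption that every element of $IGSD$ is integrable only to some finite order, so $\mathcal{G}$ is rigid.

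I expect the last step to be the real obstacle. Passing from ``there is a nonconstant real-analytic family of solitons through $\mathcal{G}$ with first non-vanishing jet $\gamma_{m}$'' to ``$\gamma_{m}$ is integrable as an honest first-order deformation'' hides a reparametrization when $m\ge2$: the curve $\mathcal{G}(t)$ does not literally have $\gamma_{m}$ as its derivative at $t=0$. Turning it into one has to be read off from the explicit Koiso-type description of $\mathcal{Z}$ underlying \Cref{T7} together with \Cref{C5}, and it is the vanishing of the linear part of the obstruction map on $IGSD$ that makes this possible.
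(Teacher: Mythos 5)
The paper states \Cref{C6} without proof, so the comparison here is really about whether your argument is sound; your route --- curve selection in the real-analytic set $\mathcal{P}_{\mathcal{G}}\subset\mathcal{Z}$ furnished by \Cref{T7}, extraction of the leading Taylor coefficient of the resulting arc, and an appeal to \Cref{C5} --- is certainly the intended one. Your first three steps are correct: if $\mathcal{G}$ is not rigid, the curve selection lemma gives a nonconstant analytic arc $\mathcal{G}(t)=\mathcal{G}+\sum_{l\ge m}\frac{t^l}{l!}\gamma_l$ of solitons in the slice with $\gamma_m\neq 0$, and comparing coefficients of $t^l$ for $m\le l<2m$ in $\mathcal{R}(\mathcal{G}(t))=0$ shows each such $\gamma_l$ lies in $\ker\mathcal{A}\cap T_{\mathcal{G}}\mathcal{S}^f_{\mathcal{G}}$, so $\gamma_m$ is a nonzero essential infinitesimal generalized solitonic deformation.

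The step you flag as ``the real obstacle'' is, however, a genuine gap and not a reparametrization that can be read off from the construction of $\mathcal{Z}$: when $m\ge 2$ the leading coefficient of an arc in an analytic set need not be formally integrable, even though the linearization of the obstruction map vanishes on $IGSD$. The model counterexample is the cusp: for $\Phi(x,y)=y^2-x^3$ one has $\Phi(0)=0$ and $\Phi'(0)=0$, the origin is not isolated in $\Phi^{-1}(0)$ (the arc $(t^2,t^3)$ lies in it), yet every nonzero direction $(a,b)$ is obstructed at order $2$ (if $b\neq 0$ the $t^2$-coefficient of $\Phi$ along any formal curve with that tangent is $b^2$) or at order $3$ (if $b=0$ the $t^3$-coefficient is $-a^3$, which no choice of higher corrections can cancel). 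So ``every essential deformation is integrable only up to finite order'' does not by itself force isolation, and the contradiction you aim for does not follow. What the arc does yield is the following: the $t^{2m}$-coefficient of $\mathcal{R}(\mathcal{G}(t))=0$ reads $\frac{1}{2(m!)^2}\mathcal{R}''(\gamma_m,\gamma_m)+\frac{1}{(2m)!}\mathcal{A}(\gamma_{2m})=0$, so $\mathcal{R}''(\gamma_m,\gamma_m)\in\im\mathcal{A}$ and, by \Cref{P6} and \Cref{L9}, $\gamma_m$ is integrable up to second order. Hence your argument does prove the corollary with ``integrable up to finite order'' read as ``integrable up to second order,'' which is the version actually exploited in Section 5; to rule out obstructions first appearing at order $k\ge 3$ one needs the lower-order obstructions to vanish identically on $IGSD$ (as in Koiso's order-two-or-three rigidity criterion), not merely direction by direction, and that hypothesis is not available here.
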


In general, it is hard to check whether $\gamma\in IGSD$ is integrable or not. The following result provides us a condition to check whether $\gamma$ is integrable up to the second order.
\begin{lemma}\label{L9}
    Suppose $\gamma$ is an infinitesimal generalized Ricci solitonic deformation on a steady gradient generalized Ricci soliton $\mathcal{G}$. Then, $\gamma$ is integrable up to the second order if and only if $\mathcal{R}''(\gamma,\gamma)$ is orthogonal to IGSD.
\end{lemma}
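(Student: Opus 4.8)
The plan is to use the Bianchi operator $\beta_{\mathcal{G}}$ together with the decomposition $\ker\beta_{\mathcal{G}}=\im\mathcal{A}\oplus IGSD$ from \Cref{P6} as the central bookkeeping device. The claim links the vanishing of the second-order obstruction to a first-order correction $\gamma_2$ against an orthogonality condition on $\mathcal{R}''(\gamma,\gamma)$.

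\begin{proof}
Write the formal power series $\mathcal{G}(t)=\mathcal{G}+t\gamma+\tfrac{t^2}{2}\gamma_2+\cdots$ and expand $\mathcal{R}(\mathcal{G}(t))=\mathcal{R}(\mathcal{G})+t\,\mathcal{R}'_{\mathcal{G}}(\gamma)+\tfrac{t^2}{2}\big(\mathcal{R}'_{\mathcal{G}}(\gamma_2)+\mathcal{R}''_{\mathcal{G}}(\gamma,\gamma)\big)+O(t^3)$. Since $\mathcal{G}$ is a steady gradient generalized Ricci soliton, $\mathcal{R}(\mathcal{G})=0$; since $\gamma\in IGSD$ we have $\mathcal{R}'_{\mathcal{G}}(\gamma)=\mathcal{B}(\gamma)=0$ (using that $\gamma$ is non-trivial, so $\mathcal{A}(\gamma)=\mathcal{B}(\gamma)$ by \Cref{P5}). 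Hence $\gamma$ is formally integrable up to order two if and only if there exists a $2$-tensor $\gamma_2$ with $\mathcal{A}(\gamma_2)=\mathcal{R}'_{\mathcal{G}}(\gamma_2)=-\tfrac12\mathcal{R}''_{\mathcal{G}}(\gamma,\gamma)$; that is, if and only if $\mathcal{R}''_{\mathcal{G}}(\gamma,\gamma)\in\im\mathcal{A}$. (One may also prescribe $\gamma_2$ inside the slice $\mathcal{S}^f_{\mathcal{G}}$, since $\mathcal{A}$ and $\mathcal{B}$ agree on non-trivial tensors and the slice direction absorbs the image of $\mathcal{A}$; this is the standard reduction used in the proof of \Cref{T7}.)

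Next I would show $\mathcal{R}''_{\mathcal{G}}(\gamma,\gamma)\in\ker\beta_{\mathcal{G}}$. Indeed \Cref{L7} gives $\beta_{\mathcal{G}(t)}\big(\mathcal{R}(\mathcal{G}(t))\big)\equiv 0$ as a formal identity in $t$. Differentiating twice at $t=0$ and using $\mathcal{R}(\mathcal{G})=0$, $\mathcal{R}'_{\mathcal{G}}(\gamma)=0$ kills the terms where derivatives fall on $\beta$, leaving $\beta_{\mathcal{G}}\big(\mathcal{R}'_{\mathcal{G}}(\gamma_2)+\mathcal{R}''_{\mathcal{G}}(\gamma,\gamma)\big)=0$; but $\beta_{\mathcal{G}}\circ\mathcal{R}'_{\mathcal{G}}=0$ on all of $\otimes^2T^*M$ by (the proof of) \Cref{L8} with $\mathcal{G}\in\mathcal{GRS}$, whence $\beta_{\mathcal{G}}\big(\mathcal{R}''_{\mathcal{G}}(\gamma,\gamma)\big)=0$. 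By \Cref{P6} we then have the orthogonal decomposition $\mathcal{R}''_{\mathcal{G}}(\gamma,\gamma)=\mathcal{A}(\gamma_1')+w$ with $w\in IGSD$, the two summands being $(\,\cdot\,,\cdot\,)_f$-orthogonal because $\mathcal{A}$ is self-adjoint and $IGSD=\ker\mathcal{A}\cap\{\text{non-trivial}\}$. Therefore $\mathcal{R}''_{\mathcal{G}}(\gamma,\gamma)\in\im\mathcal{A}$ if and only if its $IGSD$-component $w$ vanishes, i.e.\ if and only if $\mathcal{R}''_{\mathcal{G}}(\gamma,\gamma)\perp IGSD$ with respect to $(\,\cdot\,,\cdot\,)_f$. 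Combining with the first paragraph yields the equivalence.

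The main obstacle I expect is the careful justification that formal integrability up to order two is genuinely equivalent to solvability of the single linear equation $\mathcal{A}(\gamma_2)=-\tfrac12\mathcal{R}''_{\mathcal{G}}(\gamma,\gamma)$ within the slice: one must check that the gauge (generalized-diffeomorphism) freedom does not obstruct choosing $\gamma_2$, and that the minimizer $f$ and the auxiliary function $u$ appearing in $\mathcal{A}$ vary analytically along $\mathcal{G}(t)$ so that the second-order Taylor coefficient $\mathcal{R}''_{\mathcal{G}}$ is well defined — both of which follow from \Cref{P4} and the slice theorem \Cref{T5}, but require a short argument. A secondary technical point is verifying the Leibniz expansion of $\beta_{\mathcal{G}(t)}(\mathcal{R}(\mathcal{G}(t)))$ to second order, i.e.\ that the cross term $\beta'_{\mathcal{G}}(\gamma)\big(\mathcal{R}'_{\mathcal{G}}(\gamma)\big)$ drops out; this is immediate here since $\mathcal{R}'_{\mathcal{G}}(\gamma)=0$, but it is worth stating explicitly.
\end{proof}
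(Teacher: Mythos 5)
Your proposal is correct and follows essentially the same route as the paper: both reduce second-order integrability to the solvability of $\mathcal{A}(\gamma_2)=-\tfrac12\mathcal{R}''_{\mathcal{G}}(\gamma,\gamma)$, both obtain $\beta_{\mathcal{G}}(\mathcal{R}''_{\mathcal{G}}(\gamma,\gamma))=0$ by differentiating the Bianchi identity of \Cref{L7} twice along the curve, and both conclude via the decomposition $\ker\beta_{\mathcal{G}}=\im\mathcal{A}\oplus IGSD$ of \Cref{P6}. The extra remarks you add on orthogonality of the summands and on the vanishing of the cross terms are correct elaborations of steps the paper leaves implicit.
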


\begin{proof}
  Consider a curve of generalized metrics $\mathcal{G}(t)$ with $\mathcal{G}(0)=\mathcal{G}$ and $\frac{d}{dt}|_{t=0}\mathcal{G}(t)=\gamma$. Then,
  \begin{align*}
      &0=\frac{d}{dt}|_{t=0}\mathcal{R}(\mathcal{G}(t))=\mathcal{R}'(\mathcal{G}'(0)),
      \\&0=\frac{d^2}{dt^2}|_{t=0}\mathcal{R}(\mathcal{G}(t))=\mathcal{R}''(\mathcal{G}'(0),\mathcal{G}'(0))+\mathcal{R}'(\mathcal{G}''(0)).
  \end{align*}
Therefore, $\gamma$ is integrable up to second order if and only if $\mathcal{R}''(\gamma,\gamma)\in \im \mathcal{A}$. Since $\beta_{\mathcal{G}}(\mathcal{R}(\mathcal{G}))=0$,
\begin{align*}
    0=\beta''_{\mathcal{G}}(\gamma,\gamma)(\mathcal{R}(\mathcal{G}))+2\beta'_{\mathcal{G}}(\mathcal{R}'(\mathcal{G}))+\beta_{\mathcal{G}}(\mathcal{R}''(\gamma,\gamma))=\beta_{\mathcal{G}}(\mathcal{R}''(\gamma,\gamma)).
\end{align*}
The result is followed by \Cref{P6}.

\end{proof}

\subsection{Infinitesimal generalized Einstein deformations}
Similar to the steady 
 gradient generalized Ricci soliton case, we define 
 we define an operator
\begin{align*}
        \mathcal{E}: \quad &\mathcal{GM}\longrightarrow T^*M\otimes T^*M
        \\& \mathcal{G}(g,b)\longmapsto \Rc-\frac{1}{4}H^2-\frac{1}{2}d^*H 
\end{align*}
We denote the space of generalized Einstein metrics with unit volume to be $\mathcal{GE}$  and then
\begin{align*}
\mathcal{GE}=\mathcal{E}^{-1}(0).
\end{align*}
Let $\mathcal{G}_0$ be a generalized Einstein metric. The premoduli space of generalized Einstein metrics at $\mathcal{G}_0$ is 
\begin{align*}
    \Tilde{\mathcal{P}}_{\mathcal{G}_0}=\mathcal{GE}\cap \mathcal{S}^{0}_{\mathcal{G}_0},
\end{align*}
where $\mathcal{S}^{0}_{\mathcal{G}_0}$ is the generalized slice (Note that the minimizer is 0 in this case.). Similarly, we can take the derivative of $\mathcal{E}$. By (\ref{Df}), we see that the deformation $\gamma$ satisfies
\begin{align}
    \triangle(\tr_gh)=\divg\divg h-\frac{1}{6}\langle dK,H \rangle, \quad \int_M \tr_gh dV_g=0. \label{c}
\end{align}
Therefore, we have the following definition.

\begin{defn}\label{D18}
Let $\mathcal{G}$ be a generalized Einstein metric ($f=0$). 
\begin{itemize}
    \item A 2-tensor $\gamma\in \otimes^2 T^*M$ is called an \emph{infinitesimal generalized Einstein deformation} of $\mathcal{G}$ if $\mathcal{A}(\gamma)=0$ and $\gamma=h-K$ satisfies (\ref{c}).
    \item A 2-tensor $\gamma\in \otimes^2 T^*M$ is called an \emph{essential infinitesimal generalized Einstein deformation} of $\mathcal{G}$ if $\mathcal{B}(\gamma)=0$ and $\gamma=h-K\in T_{\mathcal{G}}\mathcal{S}_{\mathcal{G}}^0$ with $\tr_gh=0$.
\end{itemize}
In the following, we denote the set of all essential infinitesimal generalized Einstein deformations by $IGED$, i.e.
\begin{align*}
    IGED=\{\gamma\in\otimes^2 T^*M: \mathcal{B}(\gamma)=0, \gamma=h-K\in T_{\mathcal{G}}\mathcal{S}_{\mathcal{G}}^0 \text{ and $\tr_gh=0$.}\}.
\end{align*}
\end{defn}

\begin{remark}
    In short, we see that $IGED$ is a subset of $IGSD$. More precisely, 
    \begin{align*}
        IGED=IGSD\cap \{\gamma\in\otimes^2 T^*M: d\phi=0 \text{ 
 where $\phi$ is the function of $\gamma$ given by equation (\ref{c})}\}.
    \end{align*}
\end{remark}

\section{Deformations of Bismut-flat structure }

In this section, we aim to discuss essential infinitesimal generalized solitonic deformations on a Bismut-flat manifold.

\subsection{Equivalent conditions}

\begin{proposition}\label{P7}
    Let $\mathcal{G}$ be a Bismut-flat metric. The following statements are equivalent.
    \begin{enumerate}[label=(\alph*)]
        \item\label{a} $\gamma$ is an essential infinitesimal generalized solitonic deformation of $\mathcal{G}$.
        \item $\overline{\nabla}\gamma=0$ where $\overline{\nabla}$ is given in (\ref{MC}).
        \item\label{b} $\gamma=h-K$ satisfies $ \nabla_mh_{ij}=-\frac{1}{2}(H_{mik}K_{jk}+H_{mjk}K_{ik}),\quad\nabla_mK_{ij}=-\frac{1}{2}(H_{mjk}h_{ik}-H_{mik}h_{jk})$.
        \item $\frac{\partial^2}{\partial t^2}\lambda(\gamma)=0.$
     \end{enumerate}

\end{proposition}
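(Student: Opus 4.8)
The plan is to exploit how drastically the picture simplifies in the Bismut-flat case (with $M$ compact, the standing assumption). Since $Rm^+\equiv 0$, \Cref{P3} gives $\mathring{R}^+\equiv 0$ and $\nabla H=0$; tracing $Rm^+=0$ and splitting into symmetric and antisymmetric parts shows $\mathcal{G}$ is generalized Einstein, so by \Cref{C2} the minimizer $f$ of $\lambda$ is constant, whence $\nabla f=0$ and every $f$-twisted operator collapses to its untwisted counterpart. The first thing I would record is the resulting identity $\mathcal{B}(\gamma)=-\tfrac12\overline{\triangle}_f\gamma=\tfrac12\,\overline{\nabla}^{*_f}\overline{\nabla}\gamma$.

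For \ref{a} $\Leftrightarrow$ (b) I would pair $\mathcal{B}(\gamma)$ with $\gamma$ in the $f$-twisted $L^2$ product and integrate by parts on the compact $M$ to obtain $(\mathcal{B}(\gamma),\gamma)_f=\tfrac12(\overline{\nabla}\gamma,\overline{\nabla}\gamma)_f$, so $\mathcal{B}(\gamma)=0$ is equivalent to $\overline{\nabla}\gamma=0$; moreover $\overline{\nabla}\gamma=0$ forces $\overline{\divg}_f\gamma=0$ (a metric trace of $\overline{\nabla}\gamma$ plus $\nabla f$-terms, all zero), i.e. $\gamma$ is non-trivial, so both requirements in \Cref{D16} are captured by (b). For (b) $\Leftrightarrow$ \ref{b} the route is a direct computation: starting from $\overline{\nabla}_m\gamma_{ij}=\nabla_m\gamma_{ij}-\tfrac12H_{mjk}\gamma_{ik}+\tfrac12H_{mik}\gamma_{kj}$, substitute $\gamma=h-K$ with $h\in\Gamma(S^2M)$, $K\in\Omega^2$, and split $\overline{\nabla}_m\gamma_{ij}=0$ into its symmetric and antisymmetric parts in $(i,j)$. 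The point of the bookkeeping is that the $H\cdot h$ contributions on the right are antisymmetric in $(i,j)$ while the $H\cdot K$ contributions are symmetric, so the symmetric part isolates $\nabla_mh_{ij}=-\tfrac12(H_{mik}K_{jk}+H_{mjk}K_{ik})$ and the antisymmetric part gives $\nabla_mK_{ij}=-\tfrac12(H_{mjk}h_{ik}-H_{mik}h_{jk})$, which is precisely \ref{b}; reversing the splitting yields the converse.

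For (b) $\Leftrightarrow$ (d) I would use \Cref{T6}: if $\overline{\nabla}\gamma=0$ then $\overline{\divg}_f\gamma=0$, so the auxiliary function $u$ solves $\triangle_fu=0$ with $\int_M u\,e^{-f}dV_g=0$ and hence vanishes; combined with $\mathring{R}^+(\gamma)=0$ and $\overline{\divg}_f^*\overline{\divg}_f\gamma=0$, the formula of \Cref{T6} collapses to $\tfrac{\partial^2}{\partial t^2}\lambda(\gamma)=\tfrac12(\gamma,\overline{\triangle}_f\gamma)_f=-\tfrac12(\overline{\nabla}\gamma,\overline{\nabla}\gamma)_f=0$. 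Conversely, since \ref{a}, (b), \ref{b} all force $\gamma$ to be non-trivial, it suffices to argue for non-trivial $\gamma$, for which the same collapse gives $\tfrac{\partial^2}{\partial t^2}\lambda(\gamma)=-\tfrac12(\overline{\nabla}\gamma,\overline{\nabla}\gamma)_f$, so its vanishing forces $\overline{\nabla}\gamma=0$; this direction is also exactly \Cref{C3} together with \ref{a} $\Leftrightarrow$ (b).

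The main obstacle here is not computational but interpretational: the equivalence with (d) must be read with $\tfrac{\partial^2}{\partial t^2}\lambda(\gamma)=0$ understood as $\gamma$ lying in the kernel of the Hessian of $\lambda$ along the generalized slice (equivalently, restricting to non-trivial $\gamma$), since pure generalized-gauge directions always have vanishing second variation without being $\overline{\nabla}$-parallel; once that is pinned down, everything follows from \Cref{T6} and \Cref{C3}. The only computation-heavy step is the symmetry bookkeeping in (b) $\Leftrightarrow$ \ref{b}, and it is entirely elementary.
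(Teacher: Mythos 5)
Your proposal is correct and follows essentially the same route as the paper: integration by parts identifying $\mathcal{B}(\gamma)=0$ with $\overline{\nabla}\gamma=0$, the symmetric/antisymmetric splitting of $\overline{\nabla}\gamma=0$ for the equivalence with (c), and the collapse of the second variation formula of \Cref{T6} to $-\tfrac12\|\overline{\nabla}\gamma\|^2$ for (d). Your added remarks -- that $f$ is constant in the Bismut-flat case and that (d) must be read on non-trivial $\gamma$ since gauge directions always lie in the kernel of the Hessian -- are correct and make explicit what the paper's proof leaves implicit.
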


\begin{proof}
$(b)\Longleftrightarrow (d)$: We recall that the second variation of generalized Einstein--Hilbert functional is given by 
\begin{align*}
    \frac{\partial^2}{\partial t^2}\lambda(\gamma)=\frac{1}{2}\int_M \langle \gamma, \overline{\triangle}\gamma \rangle dV_g=\int_M -\frac{1}{2}|\overline{\nabla}\gamma|^2  dV_g.
\end{align*}
Thus (b) and (d) are equivalent. 
\\$(b)\Longrightarrow (c)$: From the definition,
\begin{align*}
\overline{\nabla}_m\gamma_{ij}=0\Longrightarrow \nabla_m\gamma_{ij}=-\frac{1}{2}H_{mik}\gamma_{kj}+\frac{1}{2}H_{mjl}\gamma_{il}.
\end{align*}
So 
\begin{align*}
    \nabla_m h_{ij}&=\frac{1}{2}(\nabla_m\gamma_{ij}+\nabla_m\gamma_{ji})
    \\&=-\frac{1}{4}(H_{mik}\gamma_{kj}-H_{mjl}\gamma_{il}+H_{mjk}\gamma_{ki}-H_{mil}\gamma_{jl})
    \\&=-\frac{1}{2}(H_{mik}K_{jk}+H_{mjl}K_{il}),
\end{align*}
    \begin{align*}
    \nabla_m K_{ij}&=\frac{1}{2}(\nabla_m\gamma_{ji}-\nabla_m\gamma_{ij})
    \\&=-\frac{1}{4}(-H_{mik}\gamma_{kj}+H_{mjl}\gamma_{il}+H_{mjk}\gamma_{ki}-H_{mil}\gamma_{jl})
    \\&=-\frac{1}{2}(-H_{mik}h_{jk}+H_{mjl}h_{il}).
\end{align*}
\\$(c)\Longrightarrow (a)$: By direct computation, it is clear to see that $\gamma$ is non-trivial and $\mathcal{B}(\gamma)=-\frac{1}{2}\overline{\triangle}\gamma=0$.
\\$(a)\Longrightarrow (b)$: By using integration by parts, 
\begin{align*}
    \overline{\triangle}\gamma=0\Longrightarrow  \overline{\nabla}\gamma=0.
\end{align*}
\end{proof}

\begin{remark}\label{R6}
Naturally, \Cref{P7} part (c) implies that
\begin{align}
     \nonumber&\triangle h_{ij}+2\mathring{R}(h)_{ij}+(\Rc\circ h+h\circ \Rc)_{ij}=0,
     \\& \triangle K_{ij}+2\mathring{R}(K)_{ij}+(\Rc\circ K+K\circ \Rc)_{ij}=0. \label{BG2}
\end{align}
However,
the second order equations (\ref{BG2}) are not sufficient to say that $\gamma=h-K$ is an essential infinitesimal generalized solitonic deformation. We will provide some reasons in \Cref{RR}.
\end{remark}

\begin{remark}
 Recall in \Cref{C4}, we showed that every compact, complete, simply-connected Bismut flat manifold $(M,\mathcal{G})$ exists a non-trivial variation in the kernel of the second variation of $\lambda$. By \Cref{P7}, it is equivalent to say that there exists essential infinitesimal generalized solitonic deformations on $\mathcal{G}$ and its dimension is no less than $n^2$.
\end{remark}

In the following, we focus on Bismut-flat, Einstein manifolds with Einstein constant $\mu$. \Cref{P7} suggests us the following proposition.
\begin{proposition}\label{P8}
  Let $(M,g,H)$ be a Bismut-flat, Einstein manifold with Einstein constant $\mu$. Suppose $4\mu$ is not an eigenvalue of Laplacian $\triangle$, then on $(M,g,H)$
  \begin{align*}
      IGED=IGSD,
  \end{align*}
  i.e., all essential infinitesimal generalized solitonic deformations are essential infinitesimal generalized Einstein deformations.
\end{proposition}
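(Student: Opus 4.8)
The inclusion $IGED\subseteq IGSD$ is free of charge: on a generalized Einstein manifold the minimizer of $\lambda$ is $f\equiv 0$, so any $\gamma\in IGED$ satisfies $\mathcal B(\gamma)=0$ and $\gamma\in T_{\mathcal G}S^0_{\mathcal G}$ (i.e.\ is non-trivial), hence $\gamma\in IGSD$. The whole content of the statement is therefore the reverse inclusion, and my plan is to fix $\gamma=h-K\in IGSD$ and prove $\tr_g h=0$. Granting this, $\gamma$ is non-trivial (so $\gamma\in T_{\mathcal G}S^0_{\mathcal G}$, which already follows from $\overline\nabla\gamma=0$ by \Cref{RTS}), satisfies $\mathcal B(\gamma)=0$, and has $\tr_g h=0$, i.e.\ $\gamma\in IGED$.

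First I would unpack what $\gamma\in IGSD$ means in the Bismut-flat setting. Since $R^+=0$ we have $\mathcal B(\gamma)=-\tfrac12\overline{\triangle}\gamma$, so by \Cref{P7} the condition $\mathcal B(\gamma)=0$ is equivalent to $\overline\nabla\gamma=0$, equivalently to the structure equations
\[
\nabla_m h_{ij}=-\tfrac12\big(H_{mik}K_{jk}+H_{mjk}K_{ik}\big),\qquad
\nabla_m K_{ij}=-\tfrac12\big(H_{mjk}h_{ik}-H_{mik}h_{jk}\big).
\]
Next I would record the curvature identities coming from Bismut-flatness together with the Einstein condition: by \Cref{P3}, $\nabla H=0$ (hence $d^*H=0$) and $\Rc=\tfrac14 H^2$, so $\Rc=\mu g$ forces $H^2_{ij}=4\mu\,\delta_{ij}$, i.e.\ $H_{ikl}H_{jkl}=4\mu\,\delta_{ij}$. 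These are the only inputs I expect to need.

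Then set $v:=\tr_g h$. Tracing the first structure equation over $i=j$ gives $\nabla_m v=-H_{mik}K_{ik}$. Differentiating once more, using $\nabla H=0$ to discard the $(\nabla H)\ast K$ term, and substituting the second structure equation for $\nabla_m K_{ik}$, one gets $\triangle v$ as a sum of two contractions of the schematic type $H\ast H\ast h$. Each of these collapses, via the total antisymmetry (in particular cyclic symmetry) of $H$ and the identity $H^2=4\mu g$, to a multiple of $v$, and the bookkeeping should produce
\[
\triangle v=-4\mu\,v .
\]
Thus $v$, if not identically zero, is an eigenfunction of $\triangle$ with eigenvalue $4\mu$ (in the convention $\triangle v+4\mu v=0$). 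Since by hypothesis $4\mu$ is not an eigenvalue of $\triangle$, we conclude $v\equiv 0$, i.e.\ $\tr_g h=0$, which completes the argument.

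The computation is elementary once the structure equations and $H^2=4\mu g$ are in hand; the only genuinely delicate point is the index bookkeeping in reducing the two $H\ast H\ast h$ contractions — keeping careful track of the sign produced each time a pair of $H$-indices is contracted to yield $H^2$ — together with being consistent about the sign convention for $\triangle$, so that ``$4\mu$ is not an eigenvalue of $\triangle$'' is exactly the statement that forces $v\equiv 0$.
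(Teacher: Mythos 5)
Your proposal is correct and follows essentially the same route as the paper: reduce everything to showing $\tr_g h=0$ by establishing $\triangle \tr_g h+4\mu\,\tr_g h=0$ and invoking the spectral hypothesis. The only (cosmetic) difference is that you derive this trace identity directly by tracing and differentiating the first-order structure equations of \Cref{P7}(c) using $\nabla H=0$ and $H^2=4\mu g$, whereas the paper cites the traced second-order consequence (\ref{BG2}); both computations check out and give the same equation.
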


\begin{proof}
Given $\gamma\in IGSD$, (\ref{BG2}) implies that 
\begin{align*}
    \triangle h_{ij}+2\mathring{R}(h)_{ij}+(\Rc\circ h+h\circ \Rc)_{ij}=0.
\end{align*}
Take the trace, we get
 \begin{align*}
    \triangle \tr_gh+4\mu\tr_gh=0.
\end{align*}
Thus, $\tr_gh=0$ and $\gamma\in IGED$.
\end{proof}

\subsection{Existence of infinitesimal generalized Einstein deformation}

In the following, we would like to discuss the existence of infinitesimal generalized Einstein deformations on the Bismut-flat case. Before we mention the main result, we have two general lemmas.

\begin{lemma}\label{L10}
Suppose $(M^n,g,H)$ is a compact Bismut-flat manifold. If $\gamma=h-K$ is an essential infinitesimal generalized solitonic deformation on $(M^n,g,H)$, then 
\begin{align*}
    \big(\mathring{R}(h),h\big)_{L^2}=-\frac{1}{2}\|\divg h\|_{L^2}^2=-\big(\mathring{R}(K),K\big)_{L^2}.
\end{align*}
\end{lemma}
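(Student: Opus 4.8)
The plan is to use the characterization from \Cref{P7} that an essential infinitesimal generalized solitonic deformation $\gamma = h - K$ on a compact Bismut-flat manifold satisfies $\overline{\nabla}\gamma = 0$, equivalently (part (c) of \Cref{P7})
\begin{align*}
\nabla_m h_{ij} = -\tfrac{1}{2}(H_{mik}K_{jk} + H_{mjk}K_{ik}), \qquad \nabla_m K_{ij} = -\tfrac{1}{2}(H_{mjk}h_{ik} - H_{mik}h_{jk}).
\end{align*}
Since $f$ is constant here (Bismut-flat $\Rightarrow$ generalized Einstein with $f$ constant, so the $f$-twisted objects reduce to the ordinary ones), I will work with the plain $L^2$ inner product. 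The strategy is to compute $\big(\mathring{R}(h),h\big)_{L^2}$ by replacing the curvature tensor $R$ with $H$ via the Bismut-flat identity of \Cref{P3}, namely $R_{ijkl} = \tfrac{1}{4}(H_{ila}H_{jka} - H_{jla}H_{ika})$, and then integrating by parts using the first-order equations above to trade the $H^2$-type quadratic expression for $|\divg h|^2$.

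In detail, first I would expand
\begin{align*}
\big(\mathring{R}(h),h\big)_{L^2} = \int_M R_{ijkl}h_{il}h_{jk}\,dV_g = \tfrac{1}{4}\int_M (H_{ila}H_{jka} - H_{jla}H_{ika})h_{il}h_{jk}\,dV_g.
\end{align*}
By the symmetry of $h$ the two terms combine, giving $\tfrac{1}{2}\int_M H_{ila}H_{jka}h_{il}h_{jk}\,dV_g$ up to sign bookkeeping. The key observation is that $H_{mik}K_{jk} + H_{mjk}K_{ik} = -2\nabla_m h_{ij}$ lets me recognize $\divg h$ inside this expression: contracting the first-order equation $\nabla_m h_{ij} = -\tfrac12(H_{mik}K_{jk}+H_{mjk}K_{ik})$ on $m=i$ gives $(\divg h)_j = -\tfrac12 H_{mik}K_{jk}\cdot\delta_{mi}\dots$ — more precisely $(\divg h)_j = \nabla_m h_{mj} = -\tfrac12(H_{mmk}K_{jk} + H_{mjk}K_{mk}) = -\tfrac12 H_{mjk}K_{mk}$, recovering the non-triviality relation. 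So I would instead integrate by parts in the curvature integral: write $R_{ijkl}h_{il}h_{jk}$ and use $\nabla$ on one factor of $h$, or better, directly substitute the Bismut-flat form of $R$ and then use $H_{mik}K_{jk}+H_{mjk}K_{ik} = -2\nabla_m h_{ij}$ to convert $\int H^2 hh$ into $\int (\nabla h)(\dots)$, and integrate by parts once more so that $\nabla\nabla h$ hits $h$, producing $\triangle h$ which the deformation equations control, ultimately collapsing to $-\tfrac12\|\divg h\|_{L^2}^2$. The analogous computation for $K$, using $\nabla_m K_{ij} = -\tfrac12(H_{mjk}h_{ik} - H_{mik}h_{jk})$ and the relation $d^*K = 0$ (also from non-triviality, cf.\ \Cref{L5}'s computation), yields $\big(\mathring{R}(K),K\big)_{L^2} = +\tfrac12\|\divg h\|_{L^2}^2$, noting the sign flip comes from the antisymmetry of $K$ versus the symmetry of $h$ in the contraction pattern.

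The main obstacle I anticipate is the careful index bookkeeping: matching the antisymmetrization pattern of $R_{ijkl}$ in Bismut-flat form against the symmetric ($h$) versus antisymmetric ($K$) structure of the factors, and tracking which contractions of $H$ with $K$ produce $\divg h$ versus which produce zero (the $d^*K = 0$ and $(\divg h)_l = \tfrac12 K_{mk}H_{mkl}$ relations from non-triviality are both needed, and must be invoked in the right places). I would organize the computation by first proving the identity $\int_M H_{mik}H_{mjl}h_{il}h_{jk}\,dV_g$-type expressions reduce to $\|\divg h\|^2$ as a standalone lemma-within-the-proof, then apply it twice. The equality $\big(\mathring{R}(h),h\big)_{L^2} = -\big(\mathring{R}(K),K\big)_{L^2}$ then follows immediately, and in fact this is consistent with the trace-type identity one expects since $\gamma = h-K$ and the second variation formula of \Cref{T6} forces $\overline{\triangle}\gamma = 0$, decoupling into the $h$ and $K$ parts with opposite-signed curvature contributions.
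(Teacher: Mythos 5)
Your starting point (Proposition \ref{P7}(c) plus the Bismut-flat curvature identity from \Cref{P3}) is the right one, and your treatment of the $K$-part is close to the paper's: there the identity $\big(\mathring{R}(K),K\big)=\tfrac12\|\divg h\|^2$ really is a pointwise algebraic consequence of $R_{ijkl}=\tfrac14(H_{ila}H_{jka}-H_{jla}H_{ika})$, the non-triviality relation $(\divg h)_l=\tfrac12K_{ab}H_{lab}$, and the pair symmetry $R_{iabj}K_{ij}K_{ab}=-\tfrac12 R_{ijab}K_{ij}K_{ab}$ (which you should invoke explicitly to close that computation). But the $h$-part of your argument has a genuine gap. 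First, a bookkeeping error: after substituting the Bismut-flat curvature into $R_{ijkl}h_{il}h_{jk}$ the two terms do not ``combine by symmetry of $h$'' --- the term $H_{ila}H_{jka}h_{il}h_{jk}$ vanishes identically because $H_{ila}h_{il}=0$, and only $-\tfrac14H_{jla}H_{ika}h_{il}h_{jk}$ survives. More seriously, your proposed mechanism for reducing this surviving integral to $-\tfrac12\|\divg h\|^2$ does not typecheck: the expression is quadratic in $H$ and quadratic in $h$, so the equation $\nabla_m h_{ij}=-\tfrac12(H_{mik}K_{jk}+H_{mjk}K_{ik})$ (which converts $H\!\cdot\!K$ contractions into $\nabla h$) cannot be substituted into it; the relevant first-order equation would be the one for $\nabla K$, and even that only controls the \emph{antisymmetrized} combination $H_{mjk}h_{ik}-H_{mik}h_{jk}$, not the individual contraction $H_{ika}h_{il}$ appearing in your integrand. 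The step ``ultimately collapsing to $-\tfrac12\|\divg h\|^2$'' is therefore asserted rather than derived, and there is no purely pointwise identity doing this job for the symmetric part (unlike for $K$).

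The idea your proposal is missing is the one the paper's proof turns on: from \Cref{P7}(c) and the total antisymmetry of $H$, the cyclic symmetrization $(Dh)_{ijk}=\nabla_ih_{jk}+\nabla_jh_{ki}+\nabla_kh_{ij}$ vanishes identically. Expanding $0=\|Dh\|_{L^2}^2=3\|\nabla h\|_{L^2}^2+6\int_M\nabla_ih_{jk}\nabla_jh_{ki}\,dV_g$, integrating the cross term by parts to produce $\|\divg h\|^2+(\mathring{R}h,h)-\int R_{jl}h_{jk}h_{lk}$, and then eliminating $\|\nabla h\|^2$ via the second-order equation $\triangle h+2\mathring{R}(h)+\Rc\circ h+h\circ\Rc=0$ (the symmetric part of (\ref{BG2})) yields $0=6\|\divg h\|^2+12(\mathring{R}h,h)$, which is the claimed identity. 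Without the observation $Dh=0$ (or an equivalent integral identity relating $\|\nabla h\|^2$, $\|\nabla K\|^2$, $\int R_{jl}h_{jk}h_{lk}$ and $\|\divg h\|^2$, as in \Cref{L11}), the $h$-half of the lemma is not established by what you have written.
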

\begin{proof}
Define
\begin{align*}
    (Dh)_{ijk}\coloneqq \nabla_i h_{jk}+\nabla_j h_{ki}+\nabla_k h_{ij}.
\end{align*}
By using \Cref{P7} part (c), we compute
\begin{align*}
   (Dh)_{mij}&=\nabla_m h_{ij}+\nabla_i h_{jm}+\nabla_j h_{mi}
   \\&=\frac{1}{2}[H_{mik}K_{kj}+H_{mjk}K_{ki}+H_{ijk}K_{km}+H_{imk}K_{kj}+H_{jmk}K_{ki}+H_{jik}K_{km}]
   \\&=0.
\end{align*}
On the other hand, 
\begin{align*}
    \|Dh\|_{L^2}^2&=\int_M (\nabla_ih_{jk}+\nabla_jh_{ki}+\nabla_kh_{ij})^2 dV_g=3\|\nabla h\|_{L^2}^2+6\int_M \nabla_ih_{jk}\nabla_jh_{ki} dV_g.
\end{align*}
Note that 
\begin{align*}
 \int_M \nabla_ih_{jk}\nabla_jh_{ki} dV_g&=-\int_M h_{jk}\nabla_i\nabla_jh_{ki} dV_g
 \\&=-\int_M h_{jk}(\nabla_j\nabla_ih_{ki}-\mathring{R}(h)_{jk}+R_{jl}h_{lk})dV_g
 \\&=\int_M |\text{div}h|^2+\langle \mathring{R}h,h\rangle- R_{jl}h_{jk}h_{lk}dV_g
\end{align*}
so we conclude that 
\begin{align*}
    \|Dh\|_{L^2}^2&=3\|\nabla h\|_{L^2}^2+6\|\text{div} h\|_{L^2}^2+6(\mathring{R}h,h)_{L^2}-6\int_M  R_{jl}h_{jk}h_{lk}dV_g
    \\&=6\|\text{div} h\|_{L^2}^2+12(\mathring{R}h,h)_{L^2},
\end{align*}
where we use the fact that $\triangle h+2\mathring{R}(h)+(\Rc\circ h+h\circ\Rc)=0$. Next, we compute
\begin{align*}
    \langle\mathring{R}(K),K \rangle=R_{iabj}K_{ij}K_{ab}=\frac{1}{4}K_{ij}K_{ab}H_{ijl}H_{abl}-\frac{1}{4}K_{ij}K_{ab}H_{ibl}H_{ajl}=|\divg h|^2-\frac{1}{4}K_{ij}K_{ab}H_{ial}H_{jbl}.
\end{align*}
Since
\begin{align*}
   R_{iabj}K_{ij}K_{ab}=-\frac{1}{2}R_{ijab}K_{ij}K_{ab}=\frac{1}{4}K_{ij}K_{ab}H_{ial}H_{jbl},
\end{align*}
we have
\begin{align*}
    |\divg h|^2=\frac{1}{2}K_{ij}K_{ab}H_{ial}H_{jbl}=2\langle\mathring{R}(K),K \rangle.
\end{align*}

\end{proof}

\begin{lemma}\label{L11}
Suppose $(M^n,g,H)$ is a compact Bismut-flat manifold. If $\gamma=h-K$ is an essential infinitesimal generalized solitonic deformation on $(M^n,g,H)$, then 
\begin{align*}
     \|\nabla h\|_{L^2}= \|\nabla K\|_{L^2}, \quad \int_M R_{ij}h_{jk}h_{ik} dV_g-\|\divg h\|^2_{L^2}= \int_M R_{ij}K_{jk}K_{ik} dV_g.
\end{align*}

\end{lemma}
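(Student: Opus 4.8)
The plan is to establish the two identities in sequence, deducing the second from the first together with \Cref{L10}. The only structural inputs needed are: the pointwise characterization of essential infinitesimal generalized solitonic deformations from \Cref{P7}, namely that $\gamma=h-K$ satisfies
\begin{align*}
\nabla_m h_{ij}=-\tfrac12\big(H_{mik}K_{jk}+H_{mjk}K_{ik}\big),\qquad \nabla_m K_{ij}=-\tfrac12\big(H_{mjk}h_{ik}-H_{mik}h_{jk}\big);
\end{align*}
the Bismut-flat identity $\nabla H=0$ from \Cref{P3}; and the second-order equations recorded in \Cref{R6}.

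For the first identity I would expand $\|\nabla h\|_{L^2}^2=\int_M \nabla_m h_{ij}\,\nabla_m h_{ij}\,dV_g$ by substituting the displayed formula into one factor and collapsing the two resulting terms using the symmetry of $h$, obtaining $\|\nabla h\|_{L^2}^2=-\int_M (\nabla_m h_{ij})\,H_{mik}K_{jk}\,dV_g$. Integrating by parts and discarding the $\nabla_m H_{mik}$ term, which vanishes by Bismut-flatness, gives $\|\nabla h\|_{L^2}^2=\int_M h_{ij}\,H_{mik}\,\nabla_m K_{jk}\,dV_g$. Performing the analogous computation for $K$, this time collapsing the two terms using the antisymmetry of $K$, gives $\|\nabla K\|_{L^2}^2=-\int_M (\nabla_m K_{ij})\,H_{mjk}h_{ik}\,dV_g$; relabelling the dummy indices and using the total antisymmetry of $H$ (and the symmetry of $h$) identifies the right-hand side with $\int_M h_{ij}\,H_{mik}\,\nabla_m K_{jk}\,dV_g$, so that $\|\nabla h\|_{L^2}^2=\|\nabla K\|_{L^2}^2$.

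For the second identity I would pair the equation $\triangle h+2\mathring{R}(h)+(\Rc\circ h+h\circ\Rc)=0$ with $h$, integrate over $M$, and integrate the Laplacian term by parts to get
\begin{align*}
-\|\nabla h\|_{L^2}^2+2\big(\mathring{R}(h),h\big)_{L^2}+2\int_M R_{ij}h_{jk}h_{ik}\,dV_g=0,
\end{align*}
and similarly $-\|\nabla K\|_{L^2}^2+2(\mathring{R}(K),K)_{L^2}+2\int_M R_{ij}K_{jk}K_{ik}\,dV_g=0$. Subtracting these, the gradient terms cancel by the first identity, while \Cref{L10} gives $(\mathring{R}(h),h)_{L^2}-(\mathring{R}(K),K)_{L^2}=-\|\divg h\|_{L^2}^2$; rearranging then yields $\int_M R_{ij}h_{jk}h_{ik}\,dV_g-\|\divg h\|_{L^2}^2=\int_M R_{ij}K_{jk}K_{ik}\,dV_g$, which is the claim.

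The argument is essentially bookkeeping, so I expect no genuine conceptual obstacle; the points that require care are the collapsing of cross terms, which uses the symmetry of $h$ against the antisymmetry of $K$, the single integration by parts in which Bismut-flatness ($\nabla H=0$) is exactly what makes the derivative-of-$H$ term disappear, and the sign accounting in the final index relabelling coming from the total antisymmetry of $H$. One should also keep in mind that on a compact Bismut-flat manifold the soliton potential $f$ is constant, so $\triangle_f=\triangle$ and the formulas of \Cref{R6} apply verbatim.
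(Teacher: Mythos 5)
Your proof is correct, and its overall architecture (first establish $\|\nabla h\|_{L^2}=\|\nabla K\|_{L^2}$, then deduce the second identity by pairing the second-order equations of \Cref{R6} with $h$ and $K$ and invoking \Cref{L10}) matches the paper's. The one place you genuinely diverge is the first identity. The paper substitutes the first-order relation of \Cref{P7}(c) into \emph{both} factors of $|\nabla K|^2$, obtaining a pointwise expression quadratic in $H$ and $h$, and then uses the Bismut-flat curvature identity of \Cref{P3} to rewrite it as $2R_{kl}h_{kj}h_{lj}+2\langle\mathring{R}(h),h\rangle$, which is identified with $\|\nabla h\|_{L^2}^2$ only after integrating the second-order equation for $h$. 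You instead substitute into a single factor of each of $\|\nabla h\|_{L^2}^2$ and $\|\nabla K\|_{L^2}^2$, integrate by parts once (with $\nabla H=0$ killing the derivative-of-$H$ term), and match the two resulting integrals by index relabelling using the symmetries of $h$, $K$, and $H$. Your route is more elementary for this step---it needs neither the curvature identity nor the second-order equation---whereas the paper's pointwise computation has the side benefit of producing the intermediate identity $|\nabla K|^2=2R_{kl}h_{kj}h_{lj}+2\langle\mathring{R}(h),h\rangle$, which it then reuses directly in the derivation of the second identity. Both are valid; your index bookkeeping checks out (in particular the sign from the total antisymmetry of $H$ in the final relabelling), and the remark that $f$ is constant on a compact Bismut-flat manifold, while harmless, is not actually needed since the equations (\ref{BG2}) are already stated without $f$.
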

\begin{proof}
By definition,
\begin{align*}
    |\nabla K|^2&=\nabla_m K_{ij}\cdot  \nabla_m K_{ij}
    \\&=\frac{1}{4}(H_{mik}h_{kj}-H_{m jk}h_{ki})(H_{mil}h_{lj}-H_{mjl}h_{li})
    \\&=\frac{1}{2}H_{k l}^2 h_{kj}h_{lj}-\frac{1}{2}H_{mik}H_{mjl}h_{kj}h_{li}
    \\&=2R_{kl}h_{kj}h_{lj}+2\langle\mathring{R}(h),h \rangle.
\end{align*}
Therefore,
\begin{align*}
   \|\nabla h\|_{L^2}^2= \|\nabla K\|_{L^2}^2.
\end{align*}
It also implies
\begin{align*}
    \int_M R_{ij}h_{jk}h_{ik}dV_g&=\int_M \frac{1}{2}|\nabla h|^2-\langle\mathring{R}(h),h \rangle dV_g
    \\&=\int_M \frac{1}{2}|\nabla K|^2+\langle\mathring{R}(K),K \rangle dV_g
    \\&=\int_M |\divg h|^2+R_{ij}K_{jk}K_{ik} dV_g.
\end{align*}

\end{proof}

\begin{theorem}\label{T8}
Suppose $(M^n,g,H)$ is a compact Bismut-flat manifold with positive sectional curvature. Then, there does not exist any essential infinitesimal generalized Einstein deformation.
\end{theorem}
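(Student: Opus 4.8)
\emph{Plan.} The goal is to show that on such an $M$ every essential infinitesimal generalized Einstein deformation $\gamma=h-K$ is identically zero.

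\emph{Step 1: algebraic consequences.} Since $\gamma\in IGED\subset IGSD$, \Cref{P7} gives $\overline\nabla\gamma=0$, i.e.
\[
\nabla_m h_{ij}=-\tfrac12\big(H_{mik}K_{jk}+H_{mjk}K_{ik}\big),\qquad
\nabla_m K_{ij}=-\tfrac12\big(H_{mjk}h_{ik}-H_{mik}h_{jk}\big),
\]
together with $\tr_g h=0$ and $\gamma$ non-trivial. Tracing the first identity in $i=j$ yields $\nabla_m\tr_g h=-H_{mik}K_{ik}$, hence $H_{mik}K_{ik}=0$; contracting the same identity in $i=m$ gives $(\divg h)_l=-\tfrac12 H_{mlk}K_{mk}$, which vanishes by total antisymmetry of $H$ and the previous relation. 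Thus $\divg h=0$. Now the proof of \Cref{L10} contains the pointwise identity $\langle\mathring R(K),K\rangle=\tfrac12|\divg h|^2$, so $\langle\mathring R(K),K\rangle\equiv 0$ on $M$, and \Cref{L10} itself gives $(\mathring R(h),h)_{L^2}=0$.

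\emph{Step 2: reduction to the round sphere and $K\equiv 0$.} Pulling $(g,H)$ back to the universal cover $\widetilde M$ and applying the Cartan--Schouten theorem (see \Cref{E3}), $\widetilde M$ is isometric to a product of compact simple Lie groups with bi-invariant metrics. A $2$-plane spanned by vectors tangent to two distinct factors is flat, as is a $2$-plane inside a maximal torus of a factor of rank $\ge 2$; positive sectional curvature therefore forces a single rank-one factor, so $\widetilde M=SU(2)=S^3$ with a round metric. Hence $M$ is a spherical space form of some constant curvature $\kappa>0$; in particular $\mathring R(K)=\pm\kappa K$ for every $2$-form $K$, so Step 1 gives $\pm\kappa|K|^2\equiv 0$ and therefore $K\equiv 0$.

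\emph{Step 3: $h\equiv 0$, hence $\gamma=0$.} With $K\equiv 0$ the first-order system collapses to $\nabla h=0$, so $h$ is a parallel, trace-free symmetric $2$-tensor. Since $M$ is covered by $S^3$, its restricted holonomy is $SO(3)$, which acts irreducibly on $\mathbb R^3$, and the only holonomy-invariant symmetric bilinear forms are multiples of $g$; the trace-free condition then forces $h=0$. (Equivalently: $h$ parallel gives $\triangle h=0$ and $\divg h=0$, so pairing \eqref{BG2} with $h$, integrating, and using $(\mathring R(h),h)_{L^2}=0$ yields $\int_M R_{ij}h_{ik}h_{jk}\,dV_g=0$, which forces $h=0$ because positive sectional curvature makes $\Rc$ positive definite.) Hence $\gamma=h-K=0$, so no essential infinitesimal generalized Einstein deformation exists.

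\emph{Main difficulty.} The only nontrivial input is Step 2: positive sectional curvature does not by itself bound $\langle\mathring R(K),K\rangle$ from below in dimensions $\ge 4$ (it is strictly weaker than positivity of the curvature operator), so one genuinely has to exploit the rigidity of Bismut-flat geometry via Cartan--Schouten to land on the round $S^3$, where sectional positivity is equivalent to positivity of the curvature operator and $K\equiv 0$ is immediate. Everything else is a short manipulation of the first-order system of \Cref{P7} and the $L^2$ identities of \Cref{L10,L11}.
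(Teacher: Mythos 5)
Your proof is correct, and it reaches the conclusion along the same skeleton as the paper's argument (trace the first-order system of \Cref{P7}(c) to get $\operatorname{div}h=0$, feed this into the pointwise identity from \Cref{L10} to get $\langle\mathring R(K),K\rangle\equiv 0$, kill $K$ using positive curvature, then kill $h$), but it diverges at the one step that actually carries the curvature hypothesis. The paper disposes of $K$ by asserting the pointwise identity $\langle\mathring R(K),K\rangle=\sum_{i,j}\operatorname{sec}(e_i,e_j)(K_{ij})^2$, which for an \emph{antisymmetric} $K$ is really a statement about the curvature operator on $2$-forms rather than about sectional curvatures; as you note, positive sectional curvature does not control $\langle\mathring R(K),K\rangle$ in dimension $\ge 4$, so that line needs justification. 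Your replacement — Cartan--Schouten on the universal cover, ruling out multiple factors and higher-rank factors by exhibiting flat $2$-planes, hence landing on round $S^3$ where $\mathring R(K)=-\kappa K$ — closes that gap and also makes the content of the theorem transparent: the hypotheses force $n=3$ and $M$ to be a spherical space form, so the statement is vacuous in higher dimensions. Your endgame for $h$ (parallel, trace-free, irreducible holonomy) is also a clean alternative to the paper's integral argument $\int_M R_{ij}h_{ik}h_{jk}\,dV_g=\int_M R_{ij}K_{ik}K_{jk}\,dV_g=0$ from \Cref{L11}, and you correctly record the latter as an equivalent finish. In short: same architecture, but a more robust and more illuminating treatment of the key curvature step.
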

\begin{proof}
Suppose $\gamma=h-K$ is an essential infinitesimal general Einstein deformation. Taking the trace of \Cref{P7} part (c), we get
\begin{align*}
    \nabla_m(\tr_gh)=-2(\divg h)_m.
\end{align*}
From the definition, we have $\divg h=0$. Then, \Cref{L10} and \Cref{L11} imply   
\begin{align*}
    \big(\mathring{R}(h),h\big)_{L^2}=\big(\mathring{R}(K),K\big)_{L^2}=0,
\end{align*}
and
\begin{align*}
     \int_M R_{ij}h_{jk}h_{ik} dV_g= \int_M R_{ij}K_{jk}K_{ik} dV_g=\frac{1}{2}\|\nabla h\|_{L^2}^2=\frac{1}{2}\|\nabla K\|_{L^2}^2.
\end{align*}
Note that 
\begin{align*}
    \langle \mathring{R}(K),K \rangle=\sum_{i,j}sec(e_i,e_j)(K_{ij})^2=0.
\end{align*}
We can conclude that $K=h=0$ since Ricci curvatures are positive.
\end{proof}

\begin{remark}
Suppose $(M^n,g,H)$ is a compact Bismut--flat, Ricci flat manifold. If $\gamma=h-K$ is an essential infinitesimal generalized Einstein deformation, then the proof in \Cref{T8} reduces to say that
\begin{align*}
    \nabla h=0 \quad \text{and} \quad \tr_gh=0.
\end{align*}
In fact, $(M^n,g,H)$ is a flat manifold ($H=0$) and we observe that $h$ is also an essential infinitesimal Einstein deformation that matches our expectation. 

\end{remark}

\begin{example}
 Suppose $(M,g,H)$ is a compact semisimple Lie group with bi-invariant metric $g$ and 3-form $H$ is constructed by $g^{-1}H(X,Y)=[X,Y]$ where $X,Y$ are left-invariant vector fields. In fact, the Killing form is negative definite thus $(M,g,H)$ is a compact, Bismut--flat, Einstein manifold with Einstein constant $\mu$. Recall that in the proof of \Cref{C4} we constructed essential infinitesimal generalized solitonic deformations which are defined by
 \begin{align*}
     \gamma_{ij}=\alpha_i\otimes \beta_j
 \end{align*}
where $\alpha$ is a left-invariant 1-form and $\beta$ is a right-invariant 1-form. Since $\gamma$ is $\overline{\nabla}$-parallel, we compute that
\begin{align*}
    \triangle \gamma_{ij}&=\nabla_m(\frac{1}{2}H_{mjk}\gamma_{ik}-\frac{1}{2}H_{mik}\gamma_{kj})
    \\&=\frac{1}{2}H_{mjk}\nabla_m\gamma_{ik}-\frac{1}{2}H_{mik}\nabla_m\gamma_{kj}
    \\&=-2\mu\gamma_{ij}-\frac{1}{2}H_{mjk}H_{mil}\gamma_{lk}.
\end{align*}
In other words, $\triangle\gamma+2\mathring{R}(\gamma)+2\mu\gamma=0$. By taking the trace, we see that 
\begin{align*}
    \triangle(\tr_g\gamma)+4\mu(\tr_g\gamma)=0.
\end{align*}
In this example, it is clear to see that $\tr_g\gamma$ can not be 0 i.e., $\gamma$ must be an essential infinitesimal generalized solitonic deformation and $\tr_g\gamma$ is an eigenfunction of $\triangle$ with eigenvalue $4\mu$. In $S^3$ case, we will see that all infinitesimal solitonic deformations are constructed by using left and right invariant one forms. The arguments above shows that $IGED=\emptyset$ which matches the result of  \Cref{T8}.

\end{example}

\subsection{Dimension 3 Case.}

In this section, we focus on the 3-dimensional case. Recall that in \cite{K} Corollary 2.22, we show that any compact, 3-dimensional generalized Einstein manifold has constant nonnegative sectional curvatures. In this section, we assume  $(M,g,H)$ is a 3-dimensional,  compact, Bismut-flat manifold with positive Einstein constant $\mu$ and reader should note that it must be a quotient of $S^3$.

\begin{proposition}\label{P9}
Suppose $(M,g,H)$ is a 3-dimensional, compact, Bismut-flat manifold with positive Einstein constant $\mu$. Then, any essential infinitesimal generalized solitonic deformation is of the form
\begin{align*}
    \gamma=2\mu ug+\nabla^2 u-\frac{1}{2}d^*(uH)
\end{align*}
where $u$ is an eigenfunction with eigenvalue $4\mu$.
\end{proposition}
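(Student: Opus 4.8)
The plan is to classify essential infinitesimal generalized solitonic deformations on a $3$-dimensional Bismut-flat manifold $(M,g,H)$ with positive Einstein constant $\mu$ by exploiting the very special structure of dimension $3$. Since $M$ is a quotient of $S^3$, the $3$-form $H$ is (up to scaling) the volume form, so $g^{-1}H$ identifies $\Lambda^2 T^*M \cong T^*M$ via the Hodge star; concretely $H_{ijk} = c\,\varepsilon_{ijk}$ for a constant $c$ with $c^2$ proportional to $\mu$. The first step is to use \Cref{P7} part \ref{b}, which gives the coupled parallel system
\begin{align*}
\nabla_m h_{ij} = -\tfrac12\big(H_{mik}K_{jk}+H_{mjk}K_{ik}\big),\qquad
\nabla_m K_{ij} = -\tfrac12\big(H_{mjk}h_{ik}-H_{mik}h_{jk}\big).
\end{align*}
Because $K$ is a $2$-form in dimension $3$, I would write $K = \ast(uH)$-type expression, i.e. identify $K$ with a $1$-form or equivalently (using that $H$ is parallel and a multiple of the volume form) with a function or vector field; more precisely set $K_{ij} = \tfrac12 u_k H_{kij}$ for some $1$-form $u_k$, since every $2$-form on a $3$-manifold is $i_{(\cdot)}H$ up to constants. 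Substituting this ansatz into the second equation and contracting with $H$ should convert the system into equations purely for the $1$-form $u_k$ and the symmetric tensor $h$.

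The key computation is then to show that $h$ is forced to be of the stated form $2\mu u g + \nabla^2 u - \tfrac12 d^*(uH)$ and that $u_k = \nabla_k u$ is a gradient. I expect the second parallel equation, after the substitution $K_{ij}=\tfrac12 u_k H_{kij}$ and using $\nabla H = 0$, to read $\nabla_m u_k$ (contracted into $H$) $= $ something symmetric in the $h$-indices, which by the nondegeneracy of $H$ in dimension $3$ forces $\nabla_{[m} u_{k]} = 0$, hence $u = \nabla(\text{function})$ locally and globally by simple-connectedness of $S^3$ (descending to $M$ requires a short argument, or one works on the universal cover and checks invariance). Then the first parallel equation $\nabla_m h_{ij} = -\tfrac12(H_{mik}K_{jk}+H_{mjk}K_{ik})$ becomes, after plugging in $K$ in terms of $\nabla u$ and using the dimension-$3$ identity $H_{mik}H_{jlk} = c^2(\delta_{mj}\delta_{il}-\delta_{ml}\delta_{ij})$, an explicit first-order ODE for $h$ whose solution is pinned down by prescribing the value of $h$ at one point together with the compatibility condition coming from $R^+ = 0$. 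Matching constants with the Einstein relation $\Rc = \tfrac14 H^2 = \mu g$ (so $c^2 = 4\mu$ after normalization) should produce exactly $h = 2\mu u g + \nabla^2 u - \tfrac12 d^*(uH)$ — note $d^*(uH)$ is symmetric here because $H$ is closed and coclosed.

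Finally, I would verify that $u$ must be an eigenfunction with eigenvalue $4\mu$. This follows by taking the trace of the Bochner-type identity \Cref{R6} equation (\ref{BG2}): $\triangle h + 2\mathring{R}(h) + (\Rc\circ h + h\circ\Rc) = 0$ gives $\triangle(\tr_g h) + 4\mu\,\tr_g h = 0$, and computing $\tr_g h$ from the proposed form of $h$ — using $\tr_g(\nabla^2 u) = \triangle u$, $\tr_g(d^*(uH)) = 0$ (since $d^*(uH)$ is trace-free, being built from the $2$-form $uH$), and $\tr_g(2\mu u g) = 6\mu u = 2n\mu u$ with $n=3$ — yields $\tr_g h = 6\mu u + \triangle u$, so the trace equation becomes $\triangle(\triangle u + 6\mu u) + 4\mu(\triangle u + 6\mu u) = 0$; combined with the fact (from $\gamma \in IGSD$ being essential, \Cref{P7}) that $\tr_g h = 2\phi$ where $\phi$ solves the eigenvalue-type equation forced by the soliton structure, one isolates $\triangle u = -4\mu u$. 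The main obstacle I anticipate is the bookkeeping in the middle step: correctly tracking all the constant factors ($c$ vs. $\mu$, the $\tfrac12$'s from the Bismut torsion, the $\tfrac14$ in $H^2$) and confirming that the two first-order parallel equations are genuinely \emph{consistent} (integrable) with the claimed closed form for $h$ — this consistency is where Bismut-flatness $R^+=0$ must be used essentially, since $\overline\nabla\gamma = 0$ can only have solutions when the curvature of $\overline\nabla$ annihilates them, and in dimension $3$ the Bismut-flat condition makes the curvature obstruction vanish identically.
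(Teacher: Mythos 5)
Your proposal is essentially correct but takes a genuinely different route from the paper. The paper's proof is much shorter and softer: given $\gamma=h-K\in IGSD$, it uses the trace argument of \Cref{P8} to see that $\operatorname{tr}_gh$ is a $4\mu$-eigenfunction, sets $u=\tfrac{1}{2\mu}\operatorname{tr}_gh$, subtracts the explicit kernel element $2\mu ug+\nabla^2u-\tfrac12 d^*(uH)$ (whose $\overline{\triangle}$-harmonicity is quoted from \cite{K} Remark 4.18), observes that the difference lies in $IGED$, and then invokes the vanishing theorem \Cref{T8} (no essential infinitesimal generalized Einstein deformations under positive sectional curvature, proved via the integral identities of \Cref{L10} and \Cref{L11}). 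You instead attack the first-order parallel system of \Cref{P7}(c) directly using the dimension-three identification of $2$-forms with $1$-forms via $H=c\,dV$. This does work, and arguably more cleanly than you describe: writing $K_{ij}=\tfrac12 u_lH_{lij}$ and contracting the $\nabla K$ equation with $H$ gives $\nabla_mu_p=-h_{mp}+\delta_{mp}\operatorname{tr}_gh$, whose \emph{symmetry} in $m,p$ forces $du=0$ (hence $u_p=-\nabla_pu$ since $b_1(M)=0$) and simultaneously determines $h$ \emph{algebraically} as $h=(\operatorname{tr}_gh)g+\nabla^2u$ with $\operatorname{tr}_gh=-\tfrac12\triangle u$; then the trace of the $\nabla h$ equation (using $c^2=2\mu$) yields $\nabla(\triangle u+4\mu u)=0$, hence $\triangle u=-4\mu u$ after normalization and $h=2\mu ug+\nabla^2u$. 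So you do not need to solve an ODE for $h$ or prescribe initial data, and you do not need \Cref{T8} at all — your route buys a self-contained computation, while the paper's buys brevity and reuses its general machinery.

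Two points in your write-up need repair. First, $d^*(uH)$ is a $2$-form, hence antisymmetric, not symmetric: $d^*(uH)_{ij}=-\nabla_ku\,H_{kij}$ since $\nabla H=0$ and $d^*H=0$. It is the component $K$ of $\gamma=h-K$, not a summand of $h$; your parenthetical "note $d^*(uH)$ is symmetric here" is false, though the conclusion $\operatorname{tr}_g(d^*(uH))=0$ is of course still right. Second, your final step computes $\operatorname{tr}_gh$ from "the proposed form of $h$", which is circular at that stage of the argument; the eigenvalue equation should instead be extracted from the parallel system itself (as above) or, as the paper does, from the trace of the second-order equation (\ref{BG2}) applied to the \emph{unknown} $h$ before any form for $h$ has been established.
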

\begin{proof}
We claim that
\begin{align*}
    IGSD=IGED\oplus \{\gamma=h-K: h=2\mu ug+\nabla^2u,\kern0.5em K=\frac{1}{2}d^*(uH) \text{ where } \triangle u+4\mu u=0\}.
\end{align*}
For any $\gamma=h-K\in IGSD$, the proof in \Cref{P8} suggests that $\tr_gh$ satisfies $\triangle\tr_gh+4\mu \tr_gh=0$. We take $u=\frac{1}{2\mu}\tr_gh$ $(\mu>0 $ in this case) and let
\begin{align*}
    \Tilde{\gamma}=\gamma-(2\mu ug+\nabla^2u-\frac{1}{2}d^*(uH)).
\end{align*}
In \cite{K} Remark 4.18, we see that $\overline{\triangle}(2\mu ug+\nabla^2u-\frac{1}{2}d^*(uH))=0$ so 
$\overline{\triangle}\Tilde{\gamma}=0$. \Cref{P7} implies that $\Tilde{\gamma}\in IGED$ and we finish the proof of claim. Based on this claim, the proposition follows from \Cref{T8}. 
\end{proof}

\begin{remark}\label{RR}
Let $(M^n,g,H)$ be a Bismut-flat, Einstein manifold with Einstein constant $\mu$ and $n\geq 4$. Suppose $u$ is an eigenfunction with eigenvalue $4\mu$. By direct computation, we can pick nonzero constants $a,c$ such that 
\begin{align*}
    \gamma=a\mu ug+c\nabla^2u-d^*(uH)
\end{align*}
satisfy (\ref{BG2}) and $\gamma$ is non-trivial. Since $\lambda$ is diffeomorphism invariant, the variation of the form $ L_{X}g-i_XH $ is trivial for any vector field $X$. In particular, we pick $X=-\frac{c}{2}\nabla u$. Then, 
\begin{align*}
    \frac{d^2}{dt^2}\lambda(a\mu ug, (1+\frac{c}{2})d^*(uH) )= \frac{d^2}{dt^2}\lambda (a\mu ug+c\nabla^2u,d^*(uH))+\frac{d^2}{dt^2}\lambda (-c\nabla^2 u, \frac{c}{2}d^*(uH)).
\end{align*}
If $ \gamma\in IGSD$, then there exists a conformal variation in the kernel of second variation. This result contradicts with Lemma 4.9 in \cite{K}. Therefore, the argument in the proof of \Cref{P9} only works in 3-dimensional case.
\end{remark}

Recall that in the unit sphere $S^3$ with Einstein constant $\mu=2$, the eigenvalues of Laplacian are $k(k+2)$ where $k=1,2,...$ and their corresponding eigenfunctions are homogenous harmonic polynomials of degree $k$ in $\mathbb{R}^4$. Therefore, we get the following corollary.
\begin{corollary}\label{C10}
  Let $\{x_1,x_2,x_3,x_4\}$ be a coordinate of $\mathbb{R}^4$. Any essential infinitesimal generalized solitonic deformation $\gamma$ on the unit sphere $S^3$ is of the form
  \begin{align*}
      \gamma= 4 ug+\nabla^2 u-\frac{1}{2}d^*(uH)
  \end{align*}
  where $u$ is spanned by functions 
  \begin{align*}
      \{x_1x_2,x_1x_3,x_1x_4,x_2x_3,x_2x_4,x_3x_4,x_1^2-x_2^2,x_1^2-x_3^2,x_1^2-x_4^2\}
  \end{align*}
 on the unit sphere $|x|=1$. The dimension of essential infinitesimal generalized solitonic deformations on $S^3$ is 9.   
\end{corollary}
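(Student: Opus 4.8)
The plan is to reduce everything to \Cref{P9} together with the classical spectral theory of the round sphere. Recall that the unit sphere $S^3$ (with $H=2\,dV_g$) is a compact, Bismut-flat, Einstein manifold with Einstein constant $\mu=2$, so that $4\mu=8$, and that it has positive sectional curvature. Hence, by \Cref{T8}, there are no essential infinitesimal generalized Einstein deformations on $S^3$, so the decomposition established in the proof of \Cref{P9} collapses to the exact identity
\begin{align*}
    IGSD=\Big\{\,\gamma_u:=4ug+\nabla^2u-\tfrac12 d^*(uH)\ :\ u\in C^\infty(S^3),\ \triangle u+8u=0\,\Big\}.
\end{align*}
Thus it suffices to (i) describe the $8$-eigenspace $V_8=\ker(\triangle+8)$ explicitly, and (ii) show that the linear map $u\mapsto\gamma_u$ is injective on $V_8$; the corollary is then immediate.

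For step (ii), I would take the $g$-trace. Since $d^*(uH)$ is a $2$-form it is $g$-traceless, while $\tr_g(\nabla^2u)=\triangle u$; therefore, for $u\in V_8$,
\begin{align*}
    \tr_g\gamma_u=4nu+\triangle u=12u-8u=4u\qquad(n=3),
\end{align*}
so $\gamma_u=0$ forces $u=0$. Hence $u\mapsto\gamma_u$ is injective on $V_8$, and therefore $\dim IGSD=\dim V_8$.

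For step (i), I would invoke the standard fact that the eigenvalues of $\triangle$ on the unit $S^3$ are $k(k+2)$, $k\ge 0$, with $k$-th eigenspace the restriction to $S^3$ of the space $\mathcal H_k$ of harmonic, homogeneous degree-$k$ polynomials on $\mathbb{R}^4$. The equation $k(k+2)=8$ has the unique nonnegative root $k=2$, so $V_8=\mathcal H_2|_{S^3}$. The space of all homogeneous quadratics in $x_1,\dots,x_4$ is $10$-dimensional, and the flat Laplacian $\sum_i\partial_i^2$ maps it \emph{onto} the constants, so $\dim\mathcal H_2=9$; an explicit basis of $\mathcal H_2$ is given by the six products $x_ix_j$ ($1\le i<j\le 4$) together with the three differences $x_1^2-x_\ell^2$ ($\ell=2,3,4$), each of which is manifestly harmonic. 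Since a homogeneous polynomial vanishing on $S^3$ vanishes identically, restriction to $\{|x|=1\}$ is injective on homogeneous polynomials, so these nine functions remain linearly independent on $S^3$ and span $V_8$. Combining with (ii), every $\gamma\in IGSD$ has the claimed form with $u$ in the span of the nine listed quadratics, and $\dim IGSD=9$.

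There is no genuine obstacle here, since all of the substantive analysis has already been carried out in \Cref{P9} and \Cref{T8}. The only points requiring care are the bookkeeping in (i) — correctly matching the abstract eigenspace $V_8$ with the explicit quadratic basis and confirming the count $\dim\mathcal H_2=9$ — and the tracelessness argument in (ii), both of which are routine.
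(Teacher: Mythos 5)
Your proof is correct and follows essentially the same route as the paper: reduce to \Cref{P9}, note $\mu=2$ so the relevant eigenvalue is $4\mu=8=k(k+2)$ with $k=2$, and identify the eigenspace with the nine harmonic quadratics on $\mathbb{R}^4$. Your explicit injectivity check via $\tr_g\gamma_u=4u$ is a small but worthwhile addition that the paper leaves implicit in asserting the dimension is exactly $9$.
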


\subsection{Second order Integrability}
In the last part of this section, we would like to see that not all of these essential infinitesimal generalized solitonic deformations are integrable up to second order. Let $(M,g,H)$ be a 3-dimensional, compact, Bismut-flat manifold with positive Einstein constant $\mu$. Due to \Cref{L9},  $\gamma=2\mu ug+\nabla^2u-\frac{1}{2}d^*(uH)\in IGSD$ is integrable up to second order if and only if  
\begin{align*}
    \int_M \Big\langle \mathcal{R}''(2\mu ug+\nabla^2u-\frac{1}{2}d^*(uH),2\mu ug+\nabla^2u-\frac{1}{2}d^*(uH)),2\mu wg+\nabla^2w-\frac{1}{2}d^*(wH) \Big \rangle dV_g=0.
\end{align*}
for any $w$ satisfying $\triangle w+4\mu w=0$. Since $\lambda$ is diffeomorphism invariant, we can normalize and consider  
\begin{align*}
    \int_M \Big\langle \mathcal{R}''( ug-\frac{1}{2\mu}d^*(uH),ug-\frac{1}{2\mu}d^*(uH)),wg-\frac{1}{2\mu}d^*(wH) \Big \rangle dV_g.
\end{align*}

In the following, we fix a background 3-form $H$ and consider a family of smooth metrics and 2-forms $(g_t,b_t)$ with 
\begin{align*}
        g_t=g+tug,\quad b_t=0+t \frac{d^*(uH)}{2\mu}.
\end{align*}
Then,
\begin{align*}
    H_t=H+db_t=H+t\frac{dd^*(uH)}{2\mu}=(1-\frac{\triangle u }{2\mu} t)H.
\end{align*}
In addition, we take $f_t$ to be the minimizer of $\lambda(g_t,b_t)$. For convenience, we denote 
\begin{align*}
    &\frac{\partial}{\partial t}g_t=ug=\frac{u}{1+tu} g_t\coloneqq u_t g_t, 
    \\&\frac{\partial}{\partial t}H_t=\frac{-\triangle u}{2\mu} H=-\frac{\frac{\triangle u}{2\mu}}{1-\frac{\triangle u t}{2\mu}}H_t\coloneqq \Tilde{u}_t H_t,
\end{align*}
i.e., $u_t=\frac{u}{1+tu}$ and $\Tilde{u}_t=-\frac{\triangle u}{2\mu-(\triangle u) t}$. Therefore,
\begin{align*}
    u_0=u,\quad\Tilde{u}_0=2u,\quad\frac{\partial}{\partial t}\Big|_{t=0} u_t=-u^2, \quad \frac{\partial}{\partial t}\Big|_{t=0} \Tilde{u}_t=-(\frac{\triangle u}{2\mu})^2=-4u^2.
\end{align*}
By using above notation, we can derive the evolution formulas which we record in appendix B. We then derive the following theorem.

\begin{theorem}\label{T9}
 Suppose $(M,g,H)$ is a 3-dimensional, compact, Bismut-flat manifold with positive Einstein constant $\mu$. Then,
 \begin{align*}
    \int_M \Big\langle \mathcal{R}''( ug-\frac{1}{2\mu}d^*(uH),ug-\frac{1}{2\mu}d^*(uH)),wg-\frac{1}{2\mu}d^*(wH) \Big \rangle dV_g=-6\int_M \mu u^2w dV_g,
\end{align*}
where $u,w$ are eigenfunctions of Laplacian with eigenvalue $4\mu$. Therefore, $\gamma=2\mu ug+\nabla^2u-\frac{1}{2}d^*(uH)\in IGSD$ is integrable up to second order if and only if 
\begin{align*}
    \int_M u^2w=0
\end{align*}
for any $w$ satisfying $\triangle w+4\mu w=0$.
\end{theorem}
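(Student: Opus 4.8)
The plan is to evaluate the second-order obstruction supplied by \Cref{L9} directly along the conformal curve set up above. By the reduction in the remark preceding the theorem (diffeomorphism invariance of $\lambda$, hence equivariance of its $f$-twisted gradient $\mathcal{R}$), it suffices to compute $\int_M\langle\mathcal{R}''(\gamma_u,\gamma_u),\gamma_w\rangle\,dV_g$ with $\gamma_u=ug-\tfrac1{2\mu}d^*(uH)$ and $\gamma_w=wg-\tfrac1{2\mu}d^*(wH)$, where $u,w$ are Laplace eigenfunctions of eigenvalue $4\mu$. The point of this normalization is that the curve $\mathcal{G}(t)=\mathcal{G}(g_t,b_t)$ with $g_t=g+tug$, $b_t=t\,\tfrac1{2\mu}d^*(uH)$ satisfies $g_t-b_t=g+t\gamma_u$, which is affine in $t$, so $\tfrac{d^2}{dt^2}\big|_{t=0}\mathcal{G}(t)=0$ and therefore $\mathcal{R}''(\gamma_u,\gamma_u)=\tfrac{d^2}{dt^2}\big|_{t=0}\mathcal{R}(\mathcal{G}(t))$; moreover the curve is \emph{conformal}, $g_t=(1+ut)g$, with torsion $H_t=(1+2ut)H$ after inserting $\triangle u=-4\mu u$, and with $f_0\equiv0$ since $\mathcal{G}$ is generalized Einstein.

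I would then compute $\tfrac{d^2}{dt^2}\big|_{t=0}\mathcal{R}(\mathcal{G}(t))$ term by term. Since $\mathcal{R}(\mathcal{G}(t))=\Rc_{g_t}-\tfrac14H_t^2+\nabla^2_{g_t}f_t-\tfrac12\big(d^*_{g_t}H_t+i_{\nabla f_t}H_t\big)$ and the curve is conformal, each ingredient is governed by a standard transformation law specialized to dimension three: the conformal variation of $\Rc$, the scaling $H_t^2=\tfrac{(1+2ut)^2}{(1+ut)^2}H^2$, and the conformal behaviour of $d^*$ on a $3$-form (where the dimensional coefficient $n-2k$ equals $-3$), together with $d^*_{g_t}(cH)=c\,d^*_{g_t}H-i_{\nabla c}H$. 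The minimizer $f_t$ is carried along through its first two $t$-derivatives, determined by (\ref{Df}) (equivalently by the Schr\"odinger-eigenvalue description of $\lambda$); for instance one finds $\tfrac{\partial}{\partial t}\big|_{t=0}f_t=\tfrac u2$. All of these expansions are exactly the content of the evolution formulas recorded in Appendix~B. The role of the hypotheses is that Bismut-flatness gives $\nabla H=0$ and the curvature identity $Rm(X,Y,Z,W)=\tfrac14\langle H(X,W),H(Y,Z)\rangle-\tfrac14\langle H(Y,W),H(X,Z)\rangle$ of \Cref{P3}, the Einstein condition gives $\Rc=\mu g$ and $|H|^2=12\mu$, and $u,w$ are eigenfunctions of eigenvalue $4\mu$; these are precisely what is needed to collapse the resulting expression.

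Finally I would pair $\tfrac{d^2}{dt^2}\big|_{t=0}\mathcal{R}(\mathcal{G}(t))$ with $\gamma_w=wg-\tfrac1{2\mu}d^*(wH)$ and integrate over $M$, repeatedly integrating by parts and invoking $\triangle u=-4\mu u$, $\triangle w=-4\mu w$, the identities $d^*(uH)=-i_{\nabla u}H$ and $dd^*(uH)=4\mu uH$ (equivalently $L_{\nabla u}H=(\triangle u)H$), $\nabla H=0$, and the $H$-contractions from \Cref{P3}; the many curvature and torsion terms cancel and what survives is a numerical multiple of $\int_M\mu u^2w\,dV_g$, and bookkeeping the constant gives the factor $-6$. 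With this formula, \Cref{L9} yields at once the stated equivalence: $\gamma=2\mu ug+\nabla^2u-\tfrac12d^*(uH)\in IGSD$ is integrable to second order if and only if $\int_M u^2w\,dV_g=0$ for every eigenfunction $w$ with eigenvalue $4\mu$.

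I expect the main obstacle to be the second step --- the disciplined second-order Taylor expansion of $\mathcal{R}$ along the conformal curve, and in particular the correct incorporation of $\tfrac{\partial}{\partial t}\big|_0 f_t$, $\tfrac{\partial^2}{\partial t^2}\big|_0 f_t$, and all the cross terms. There is nothing conceptually deep here, but it is a long computation, and the reason it closes is the fortunate coincidence that every contribution \emph{not} proportional to $u^2w$ (after integration by parts) is forced to vanish by the Bismut-flat, Einstein, and eigenfunction hypotheses; organizing the algebra through the Appendix~B evolution formulas and these simplifications is what makes it tractable. A minor secondary point is the gauge reduction in the first step, where diffeomorphism invariance of $\lambda$ and the $f$-twisted slice condition $\overline{\divg}_f\gamma_w=0$ enter, as in the preceding remark.
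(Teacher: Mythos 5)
Your proposal is correct and follows essentially the same route as the paper: reduce to the affine/conformal curve $g_t=(1+tu)g$, $b_t=t\,d^*(uH)/2\mu$ (so that $\mathcal{R}''(\gamma_u,\gamma_u)=\tfrac{d^2}{dt^2}\big|_{t=0}\mathcal{R}(\mathcal{G}(t))$), expand $\Rc^{H,f}_t$ to second order via the Appendix variation formulas together with $f'=u/2$ and the equation for $\triangle f''$, then pair with $wg-\tfrac1{2\mu}d^*(wH)$ and integrate by parts to land on $-6\int_M\mu u^2w\,dV_g$. The only step you leave implicit --- the actual bookkeeping of the constant --- is exactly what the paper's Lemmas A.1--A.3 carry out.
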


\begin{proof}
Using the above notation, we see that 
\begin{align*}
 \int_M \Big\langle \mathcal{R}''( ug-\frac{1}{2\mu}d^*(uH),ug-\frac{1}{2\mu}d^*(uH)),wg-\frac{1}{2\mu}d^*(wH) \Big \rangle dV_g=\int_M \Big\langle \frac{\partial^2 }{\partial t^2}\Big|_{t=0}\Rc^{H,f}_t, wg-\frac{1}{2\mu}d^*(wH)  \Big\rangle dV_g.
\end{align*}
By \Cref{AL3}, we have 
\begin{align*}
   \frac{\partial^2 }{\partial t^2}\Big|_{t=0}\Rc^{H,f}_t&=\Big( (|\nabla u|^2-2u^2\mu)g+\frac{1}{2}\nabla u\otimes \nabla u+u\nabla^2u+(\nabla^2f'')\Big|_{t=0}\Big)
  \\&\kern2em -\Big(\frac{5}{2}u(i_{\nabla u}H)+\frac{1}{2}(i_{\nabla f''}H)\Big|_{t=0}\Big).
\end{align*}
Then,
\begin{align*}
    &\Big\langle (|\nabla u|^2-2u^2\mu)g+\frac{1}{2}\nabla u\otimes \nabla u+u\nabla^2u+(\nabla^2f'')\Big|_{t=0},wg \Big\rangle
    \\&\kern1em=3w (|\nabla u|^2-2u^2\mu)+\frac{w}{2}|\nabla u|^2+u w\triangle u+w\triangle f''|_{t=0}
    \\&\kern1em=\frac{7}{2}w|\nabla u|^2-10u^2w\mu+w(7u^2\mu-\frac{7}{4}|\nabla u|^2)
    \\&\kern1em=\frac{7}{4}|\nabla u|^2-3u^2w\mu.
\end{align*}
Also,
\begin{align*}
   &\Big\langle (\frac{5}{2}u(i_{\nabla u}H)+\frac{1}{2}(i_{\nabla f''}H),\frac{1}{2\mu}d^*(wH)\Big\rangle
   \\&\kern1em =\Big(\frac{5}{2}u\nabla_luH_{lij}+\frac{1}{2}\nabla_l(f''|_{t=0})H_{lij}\Big)(\frac{-1}{2\mu}\nabla_kwH_{kij})
   \\&\kern1em=-5u\langle \nabla u,\nabla w\rangle-\langle \nabla f''|_{t=0},\nabla w\rangle.
\end{align*}
Therefore, 
\begin{align*}
    &\int_M \Big\langle \mathcal{R}''( ug-\frac{1}{2\mu}d^*(uH),ug-\frac{1}{2\mu}d^*(uH)),wg-\frac{1}{2\mu}d^*(wH) \Big \rangle dV_g
    \\&\kern1em =\int_M \frac{7}{4}|\nabla u|^2-3u^2w\mu-5u\langle \nabla u,\nabla w\rangle-\langle \nabla f''|_{t=0},\nabla w\rangle dV_g
    \\&\kern1em =\int_M 4u^2w\mu-5u\langle \nabla u,\nabla w\rangle dV_g
    \\&\kern1em=-6\int_M u^2w\mu dV_g,
\end{align*}
where we use integration by parts to see that 
\begin{align*}
    \int_M u\langle \nabla u,\nabla w\rangle dV_g&=-\int_M u^2\triangle w dV_g-\int_M  u\langle \nabla u,\nabla w\rangle dV_g
    \\&=4\int_Mu^2w\mu dV_g-\int_M u\langle \nabla u,\nabla w\rangle dV_g.
\end{align*}
\end{proof}

By this theorem, we see that not all $\gamma\in IGSD$ are integrable up to second order. For example,
\begin{align*}
    u=x_1^2-x_2^2, \quad w=x_1^2-x_3^2
\end{align*}
then 
\begin{align*}
    \int_{S^3} (x_1^2-x_2^2)^2(x_1^2-x_3^2)dV\neq 0.
\end{align*}
But, there are some examples that satisfy the criterion. For instance,
\begin{align*}
    u=x_1x_2\pm x_3x_4.
\end{align*}
Then,
\begin{align*}
    \int_{S^3}(x_1x_2\pm x_3x_4)^2w dV=0 \quad \text{for all eigenfunctions $w$. }
\end{align*}
\appendix

\section{Variation formulas}
In this appendix, let $(M,g,H)$ be a 3-dimensional, compact, Bismut-flat manifold with positive Einstein constant $\mu$ and $u$ is an eigenfunction of Laplacian with eigenvalue $4\mu$. Consider a family $(g_t,H_t,f_t)\in \Gamma(S^2M)\times \Omega^3\times C^\infty(M)$ with 
\begin{align*}
    g_t=(1+tu)g,\quad H_t=(1+2tu)H,
\end{align*}
and $f_t$ is the minimizer of $\lambda(g_t,H_t)$. We have the following results.
\begin{lemma}\label{AL1}
Given a smooth metric and a 3-form $(g,H)$, define $(g_t,H_t)\in \Gamma(S^2M)\times \Omega^3$ by
\begin{align*}
    g_t=(1+tu)g,\quad H_t=(1+2tu)H,
\end{align*}
where $u$ is an eigenfunction of Laplacian with eigenvalue $4\mu$. Then,
\begin{align*}
     \nonumber \frac{\partial }{\partial t}\Gamma_{ij}^k &=\frac{1}{2}\Big(\nabla_iu_t(g_t)_j^k+\nabla_ju_t(g_t)_i^k-\nabla^ku_t(g_t)_{ij}\Big),
      \\  \nonumber \frac{\partial }{\partial t}R_{ij}&=-\frac{1}{2}(\triangle u_t) (g_t)_{ij}-\frac{1}{2}\nabla_i\nabla_ju_t    
      \\ \nonumber  \frac{\partial }{\partial t} R&=-2\triangle u_t-u_tR_t,
    \\ \nonumber \frac{\partial }{\partial t} \nabla_i\nabla_j&=-\frac{1}{2}\Big(\nabla_iu_t\nabla_j+\nabla_ju_t\nabla_i-\nabla^ku_t(g_t)_{ij}\nabla_k\Big),
    \\ \nonumber \frac{\partial }{\partial t} \triangle&=-u_t\triangle+\frac{1}{2}\nabla^ku_t\nabla_k,
    \\  \frac{\partial }{\partial t} H^2_{ij}&=(-2u_t+2\Tilde{u}_t)(H_t^2)_{ij},
    \\  \nonumber \frac{\partial }{\partial t}|H|^2&=(-3u_t+2\Tilde{u}_t)|H_t|^2,
      \\  \nonumber\frac{\partial }{\partial t}(d^*H)_{ij}&=-u_t(d^*H_t)_{ij}+d^*(\Tilde{u_t}H_t)_{ij}+\frac{3}{2}(i_{\nabla u_t}H)_{ij}.
\end{align*}
where $u_t=\frac{u}{1+tu}$, $\Tilde{u}_t=-\frac{\triangle u}{2\mu-(\triangle u) t}=\frac{2u}{1+2tu}$. If $f(t)$ is a family of smooth functions, we have
\begin{align*}
    \frac{\partial }{\partial t}|\nabla f|^2&=-u_t|\nabla f_t|^2+2\langle \nabla f_t,\nabla f_t' \rangle,
    \\\frac{\partial }{\partial t}(i_{\nabla f}H)&=(-u_t+\Tilde{u}_t)(i_{\nabla {f_t}}H)+(i_{\nabla {f'_t}}H).
\end{align*}
\end{lemma}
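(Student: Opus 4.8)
The plan is to exploit that the deformation $(g_t,H_t)=\bigl((1+tu)g,\,(1+2tu)H\bigr)$ is \emph{conformal} in the metric and a pointwise rescaling of a \emph{fixed} three-form. Working at time $t$ with respect to $g_t$ and $H_t$, observe first that since the conformal factors are affine in $t$ one has $\frac{\partial}{\partial t}g_t=ug=\frac{u}{1+tu}\,g_t=u_t\,g_t$, hence $\frac{\partial}{\partial t}(g_t)^{-1}=-u_t\,(g_t)^{-1}$, and $\frac{\partial}{\partial t}H_t=2uH=\frac{2u}{1+2tu}\,H_t=\tilde u_t\,H_t$; the eigenfunction equation $\triangle u+4\mu u=0$ rewrites $\tilde u_t=\frac{2u}{1+2tu}$ as $-\frac{\triangle u}{2\mu-t\triangle u}$, which is the form recorded in the statement. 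Thus every formula in the lemma is obtained by differentiating a standard tensorial expression through the scalar factors $u_t$ and $\tilde u_t$ and through the Christoffel symbols.

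For the purely metric quantities I would follow the classical conformal-change computation. Differentiating $\Gamma_{ij}^k$ using $\frac{\partial}{\partial t}g_t=u_t\,g_t$ gives $\frac{\partial}{\partial t}\Gamma_{ij}^k=\frac12\bigl(\nabla_iu_t\,\delta_j^k+\nabla_ju_t\,\delta_i^k-\nabla^ku_t\,(g_t)_{ij}\bigr)$, with all derivatives taken with respect to $g_t$. Substituting this into the linearization of the Ricci tensor and specializing to $n=3$ yields $\frac{\partial}{\partial t}R_{ij}=-\frac12(\triangle u_t)(g_t)_{ij}-\frac12\nabla_i\nabla_ju_t$; tracing against $(g_t)^{ij}$ and adding the term $-u_tR_t$ coming from $\frac{\partial}{\partial t}(g_t)^{ij}=-u_t(g_t)^{ij}$ gives $\frac{\partial}{\partial t}R=-2\triangle u_t-u_tR_t$. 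The operator identities for $\nabla_i\nabla_j$ and $\triangle$ follow by differentiating $\nabla_i\nabla_j\phi=\partial_i\partial_j\phi-\Gamma_{ij}^k\partial_k\phi$ for a fixed function $\phi$ (only the Christoffel term moves) and then contracting with $(g_t)^{ij}$, which produces the extra $-u_t\triangle$ summand.

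For the three-form quantities, $H^2_{ij}$ and $|H|^2$ are built from two copies of $H_t$ together with, respectively, two and three copies of $(g_t)^{-1}$, so the product rule immediately gives $\frac{\partial}{\partial t}H^2_{ij}=(2\tilde u_t-2u_t)(H^2_t)_{ij}$ and $\frac{\partial}{\partial t}|H|^2=(2\tilde u_t-3u_t)|H_t|^2$. For $d^*H$ I would write $(d^*_tH_t)_{ij}=-(g_t)^{kl}\nabla_k(H_t)_{lij}$ and differentiate through its three $t$-dependent ingredients: the inverse metric contributes $-u_t(d^*_tH_t)_{ij}$; the factor $H_t$ contributes $-(g_t)^{kl}\nabla_k(\tilde u_tH_t)_{lij}=d^*_t(\tilde u_tH_t)_{ij}$; and the Christoffel symbols inside $\nabla_k$ contribute $(g_t)^{kl}\bigl(\dot\Gamma^m_{kl}(H_t)_{mij}+\dot\Gamma^m_{ki}(H_t)_{lmj}+\dot\Gamma^m_{kj}(H_t)_{lim}\bigr)$, which, after inserting the Christoffel variation, using the antisymmetry of $H_t$ in its first two slots, the cyclic symmetry of the $3$-form, and the trace $(g_t)^{kl}\dot\Gamma^m_{kl}=\tfrac{2-n}{2}\nabla^mu_t$, collapses to $\tfrac{6-n}{2}(i_{\nabla u_t}H_t)_{ij}=\tfrac32(i_{\nabla u_t}H_t)_{ij}$ for $n=3$. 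Finally $|\nabla f|^2=(g_t)^{ij}\partial_if_t\,\partial_jf_t$ and $(i_{\nabla f_t}H_t)_{ij}=(g_t)^{kl}\partial_lf_t\,(H_t)_{kij}$ differentiate by the product rule, using $\frac{\partial}{\partial t}(g_t)^{-1}=-u_t(g_t)^{-1}$ and $\frac{\partial}{\partial t}H_t=\tilde u_tH_t$, to give the last two formulas.

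The only step that is not a one-line product-rule computation is the Christoffel contribution to $\frac{\partial}{\partial t}(d^*H)$: there one must carry out the three separate contractions of $\dot\Gamma$ against $H_t$ and recognize that the off-diagonal pieces either vanish by antisymmetry or recombine, through cyclicity of the $3$-form, into multiples of $i_{\nabla u_t}H_t$. I would also be careful with the bookkeeping between $H$ and $H_t=(1+2tu)H$, and between $u$ and $u_t$, since the content of the lemma is precisely that all of these variations close up in the stated closed form in terms of $u_t$, $\tilde u_t$, $g_t$ and $H_t$.
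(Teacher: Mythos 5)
Your proposal is correct and is exactly the ``direct computation'' that the paper's proof merely cites (the paper's proof of \Cref{AL1} is a one-line reference to standard variation formulas): you use $\frac{\partial}{\partial t}g_t=u_tg_t$, $\frac{\partial}{\partial t}H_t=\tilde u_tH_t$, the linearized Christoffel symbols, and the product rule through the various copies of $g_t^{-1}$ and $H_t$, and your coefficient bookkeeping (e.g.\ $\frac{2-n}{2}\nabla^mu_t$ for the traced Christoffel variation and $\frac{6-n}{2}=\frac32$ for the $d^*H$ term) checks out. You also correctly flag that several of the stated formulas (for $\dot R_{ij}$, $\dot R$, $\dot\triangle$, and the $\frac32\,i_{\nabla u_t}H$ term) are specific to $n=3$, which the lemma statement leaves implicit but the appendix context supplies.
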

\begin{proof}
This lemma is followed by direct computation. One can consult with \cite{GRF} chapter 5.1.
\end{proof}

Besides, we know that a Bismut--flat manifold is a critical point of $\lambda$ so we are able to compute the derivative of $f$.
\begin{lemma}\label{AL2}
Let $(M,g,H)$ be a 3-dimensional, compact, Bismut-flat manifold with positive Einstein constant $\mu$ and $u$ is an eigenfunction of Laplacian with eigenvalue $4\mu$. Suppose $(g_t,H_t,f_t)\in \Gamma(S^2M)\times \Omega^3\times C^\infty(M)$ with 
\begin{align*}
    g_t=(1+tu)g,\quad H_t=(1+2tu)H,
\end{align*}
and $f_t$ is the minimizer of $\lambda(g_t,H_t)$. Then,
\begin{align*}
   \frac{\partial }{\partial t}\Big|_{t=0}f=\frac{u}{2}\quad \text{and}\quad \triangle\Big(\frac{\partial^2 }{\partial t^2}\Big|_{t=0}f\Big)=7\mu u^2-\frac{7}{4}|\nabla u|^2.
\end{align*}
\end{lemma}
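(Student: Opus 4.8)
The plan is to deduce both formulas by differentiating in $t$ the pointwise Euler--Lagrange equation that characterizes the minimizer,
\begin{align*}
\lambda(g_t,H_t)=R_{g_t}-\tfrac{1}{12}|H_t|^2_{g_t}+2\triangle_{g_t}f_t-|\nabla f_t|^2_{g_t},
\end{align*}
together with the normalization $\int_M e^{-f_t}\,dV_{g_t}=1$. First I record the data at $t=0$: since $(M,g,H)$ is a (Bismut--flat) generalized Einstein manifold, $f\equiv\mathrm{const}$ solves \eqref{GRS}, so by \Cref{C2} the minimizer $f_0$ is constant; moreover $\Rc=\mu g$ gives $R=3\mu$, \Cref{P3} together with $R^+=0$ gives $|H|^2=4R=12\mu$, the eigenfunction hypothesis is $\triangle u=-4\mu u$, and integrating the latter over $M$ gives $\int_M u\,dV_g=0$.

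For $\tfrac{\partial}{\partial t}\big|_{t=0}f=\tfrac{u}{2}$ I apply \Cref{L2} to the family $(g_t,H_t)$, for which $h=\tfrac{\partial}{\partial t}\big|_{t=0}g_t=ug$ and $dK=\tfrac{\partial}{\partial t}\big|_{t=0}H_t=2uH$. Using that $f_0$ is constant, $\tr_g h=3u$, $\divg_f\divg_f h=\divg\divg(ug)=\triangle u=-4\mu u$, and $\langle dK,H\rangle=2u|H|^2=24\mu u$, so \Cref{L2} gives
\begin{align*}
\triangle(3u-2\phi)=-4\mu u-\tfrac16(24\mu u)=-8\mu u=\triangle(2u),
\end{align*}
whence $u-2\phi$ is constant. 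The normalization $\int_M(\tr_g h-2\phi)e^{-f}\,dV_g=0$ combined with $\int_M u\,dV_g=0$ forces this constant to vanish, i.e.\ $\phi=\tfrac{u}{2}$.

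For the second formula I differentiate the Euler--Lagrange equation twice at $t=0$. \Cref{AL1} supplies the first $t$-derivatives of $R_{g_t}$, $|H_t|^2_{g_t}$, $\triangle_{g_t}$ and $|\nabla f_t|^2_{g_t}$, and their second derivatives follow by differentiating those formulas once more (or, for the curvature term, from the dimension-three conformal law $R_{e^{2\psi}g}=e^{-2\psi}\big(R-4\triangle\psi-2|\nabla\psi|^2\big)$ with $e^{2\psi_t}=1+tu$). Feeding in $f_0\equiv\mathrm{const}$ and $\tfrac{\partial}{\partial t}\big|_{t=0}f=\tfrac{u}{2}$, the only undetermined quantity left is $\triangle f''$, which appears through $\tfrac{\partial^2}{\partial t^2}\big|_{t=0}(\triangle_{g_t}f_t)=\triangle f''+2(\partial_t\triangle)\big|_{t=0}(\tfrac{u}{2})$; collecting all the now-explicit contributions produces an identity of the form
\begin{align*}
2\triangle f''=\lambda''(0)+14\mu u^2-\tfrac72|\nabla u|^2.
\end{align*}
Integrating over $M$ and using $\int_M\triangle f''\,dV_g=0$ and $\int_M|\nabla u|^2\,dV_g=-\int_M u\,\triangle u\,dV_g=4\mu\int_M u^2\,dV_g$ forces $\lambda''(0)=0$, which yields $\triangle f''=7\mu u^2-\tfrac74|\nabla u|^2$. (Alternatively $\lambda''(0)=0$ follows conceptually: $ug-\tfrac{1}{2\mu}d^*(uH)$ coincides with an essential infinitesimal generalized solitonic deformation up to a generalized gauge term, and such deformations have vanishing second variation of $\lambda$ by \Cref{P7}.)

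The step I expect to be the main obstacle is the last one --- organizing the second $t$-variations of $R$ and of $|H|^2$ under the simultaneous conformal rescalings $g_t=(1+tu)g$, $H_t=(1+2tu)H$, and tracking the cross terms coming from the $t$-dependence of the Levi--Civita connection and of $f_t$. It is precisely the conformality of the family, the Bismut--flat Einstein relations, and the eigenvalue identity $\triangle u=-4\mu u$ that make the numerous $\mu u^2$ and $|\nabla u|^2$ terms collapse into the stated coefficients.
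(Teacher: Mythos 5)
Your proposal is correct and takes essentially the same route as the paper: the first identity is obtained from \Cref{L2} (equation (\ref{Df})) specialized to $h=ug$, $dK=2uH$ with $f_0$ constant, and the second by differentiating the pointwise identity $\lambda=R-\tfrac{1}{12}|H|^2+2\triangle f-|\nabla f|^2$ twice at $t=0$ using the variation formulas of \Cref{AL1} and solving for $\triangle f''$. The only substantive difference is that you justify $\lambda''(0)=0$ (by integrating the resulting identity over $M$, or via the gauge/second-variation argument), a step the paper asserts without comment, while conversely you leave the second-order bookkeeping itself asserted rather than carried out; the stated outcome matches the paper's computation.
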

\begin{proof}
We recall that the derivative of $f$ satisfies the equation (\ref{Df}) so in this case
\begin{align*}
    0=2\triangle (\frac{\partial }{\partial t}\Big|_{t=0}f)-\triangle u,\quad \int_M 3u-2(\frac{\partial }{\partial t}\Big|_{t=0}f) dV_g=0.
\end{align*}
Since $\int_M u dV_g=0$, we can conclude that $\frac{\partial }{\partial t}\Big|_{t=0}f=\frac{u}{2}$. For the second derivative, we observe that
\begin{align*}
    0= \frac{\partial^2 }{\partial t^2}\Big|_{t=0}\lambda= \frac{\partial^2 }{\partial t^2}\Big|_{t=0}(R-\frac{1}{12}|H|^2+2\triangle f-|\nabla f|^2).
\end{align*}
We compute
\begin{align*}
     \frac{\partial^2 }{\partial t^2}\Big|_{t=0}R&=\frac{\partial }{\partial t}\Big|_{t=0}(-2\triangle u_t-u_t R_t)
     \\&=\Big(-2\triangle'u_t-2\triangle u_t'-u_t'R_t-u_tR_t'\Big)\Big|_{t=0}
     \\&=(2u\triangle u-|\nabla u|^2)+2\triangle(u^2)+u^2R-u(-2\triangle u-uR)
     \\&=8u\triangle u+3|\nabla u|^2+2u^2R
     \\&=-26u^2\mu+3|\nabla u|^2,
\end{align*}
\begin{align*}
    \frac{\partial^2 }{\partial t^2}\Big|_{t=0}|H|^2&=\frac{\partial }{\partial t}\Big|_{t=0}\Big((-3u_t+2\Tilde{u}_t)|H_t|^2\Big)
 \\&=\Big((-3u_t+2\Tilde{u}_t)'|H_t|^2+(-3u_t+2\Tilde{u}_t)(|H_t|^2)'\Big)\Big|_{t=0}
 \\&=-5u^2|H|^2+(-3u_t+2\Tilde{u}_t)^2|H_t|^2\Big|_{t=0}
 \\&=-4u^2|H|^2
 \\&=-48u^2\mu,
\end{align*}
\begin{align*}
    (\triangle'f')\Big|_{t=0}&=(-u_t\triangle f'+\frac{1}{2}\langle\nabla u_t,\nabla f'\rangle)\Big|_{t=0}
    \\&=-\frac{u}{2}\triangle u+\frac{1}{4}|\nabla u|^2
    \\&=2u^2\mu+\frac{1}{4}|\nabla u|^2,
\end{align*}
\begin{align*}
     \frac{\partial^2 }{\partial t^2}\Big|_{t=0}|\nabla f|^2&=\frac{\partial }{\partial t}\Big|_{t=0}\Big(-u_t|\nabla f_t|^2+2\langle \nabla f_t,\nabla f_t' \rangle\Big)
     \\&=\Big(-u_t'|\nabla f_t|^2-u_t(|\nabla f_t|^2)'-2u_t \langle \nabla f_t,\nabla f_t' \rangle+2|\nabla f'_t|^2+2\langle \nabla f_t,\nabla f_t'' \rangle\Big)\Big|_{t=0}
     \\&=\frac{1}{2}|\nabla u|^2.
\end{align*}
Therefore,
\begin{align*}
    0&=\frac{\partial^2 }{\partial t^2}\Big|_{t=0}(R-\frac{1}{12}|H|^2+2\triangle f-|\nabla f|^2)
    \\&=\Big( R''-\frac{1}{12}(|H|^2)''+2\triangle''f+4\triangle'f'+2\triangle f''-(|\nabla f|^2)''\Big)\Big|_{t=0}
    \\&=-14u^2\mu+\frac{7}{2}|\nabla u|^2+2\triangle f''.
\end{align*}
\end{proof}

Next, we are able to compute the second derivative.
\begin{lemma}\label{AL3}
Let $(M,g,H)$ be a 3-dimensional, compact, Bismut-flat manifold with 
 positive Einstein constant $\mu$ and $u$ is an eigenfunction of Laplacian with eigenvalue $4\mu$. Suppose $(g_t,H_t,f_t)\in \Gamma(S^2M)\times \Omega^3\times C^\infty(M)$ with 
\begin{align*}
    g_t=(1+tu)g,\quad H_t=(1+2tu)H,
\end{align*}
and $f_t$ is the minimizer of $\lambda(g_t,H_t)$. We have the following results.
\begin{align*}
    \frac{\partial^2 }{\partial t^2}\Big|_{t=0}\Rc&=(\frac{1}{2}|\nabla u|^2-4u^2\mu)g+\frac{3}{2}\nabla u\otimes \nabla u+u\nabla^2u,
    \\\frac{\partial^2 }{\partial t^2}\Big|_{t=0}H^2&=-8u^2\mu g,
    \\\frac{\partial^2 }{\partial t^2}\Big|_{t=0}\nabla^2f&=-\nabla u\otimes \nabla u+\frac{1}{2}|\nabla u|^2 g+(\nabla^2f''),
    \\\frac{\partial^2 }{\partial t^2}\Big|_{t=0}(d^*H)&=4u(i_{\nabla u}H),
    \\\frac{\partial^2 }{\partial t^2}\Big|_{t=0}(i_{\nabla f}H)&=u(i_{\nabla u}H)+(i_{\nabla f''}H).
\end{align*}
Furthermore, we have
\begin{align*}
  \frac{\partial^2 }{\partial t^2}\Big|_{t=0}\Rc^{H,f}_t&=\Big( (|\nabla u|^2-2u^2\mu)g+\frac{1}{2}\nabla u\otimes \nabla u+u\nabla^2u+(\nabla^2f'')\Big|_{t=0}\Big)
  \\&\kern2em -\Big(\frac{5}{2}u(i_{\nabla u}H)+\frac{1}{2}(i_{\nabla f''}H)\Big).
\end{align*}

\end{lemma}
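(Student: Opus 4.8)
The plan is to obtain each of the five second derivatives by differentiating the corresponding first–variation identity of \Cref{AL1} once more in $t$ and evaluating at $t=0$, where we may use the Bismut–flat, Einstein data $\Rc=\mu g$, $H^2=4\mu g$, $\nabla H=0$ (hence $d^*H=0$), $\triangle u=-4\mu u$, together with the fact that the minimizer $f_0=f$ is constant, so $\nabla f_0=0$ and $\nabla^2 f_0=0$. I will also use the scalar relations $u_0=u$, $\Tilde{u}_0=2u$, $\partial_t u_t|_{t=0}=-u^2$, $\partial_t\Tilde{u}_t|_{t=0}=-4u^2$ recorded just before \Cref{T9}, and \Cref{AL2}, which gives $\partial_t f_t|_{t=0}=u/2$. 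The last identity of the lemma is then just the linear combination dictated by $\Rc^{H,f}=\Rc-\tfrac14H^2+\nabla^2f-\tfrac12(d^*H+i_{\nabla f}H)$.

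For the Ricci term I would differentiate $\partial_t R_{ij}=-\tfrac12(\triangle u_t)(g_t)_{ij}-\tfrac12\nabla_i\nabla_j u_t$. Differentiating the factor $(g_t)_{ij}$ and using $\triangle u=-4\mu u$ contributes $-\tfrac12(\triangle u)(\partial_t g_t)_{ij}|_0=2\mu u^2g_{ij}$; differentiating $\triangle u_t$ I use the operator formula $\partial_t\triangle=-u_t\triangle+\tfrac12\nabla^k u_t\nabla_k$ of \Cref{AL1} together with $\triangle(-u^2)=-2u\triangle u-2|\nabla u|^2=8\mu u^2-2|\nabla u|^2$; and differentiating $\nabla_i\nabla_j u_t$ I use $\partial_t\nabla_i\nabla_j=-\tfrac12(\nabla_i u_t\nabla_j+\nabla_j u_t\nabla_i-\nabla^k u_t(g_t)_{ij}\nabla_k)$ together with $\nabla_i\nabla_j(-u^2)=-2\nabla_iu\nabla_ju-2u\nabla_i\nabla_ju$. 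Collecting the contributions yields $\partial_t^2\Rc|_0=(\tfrac12|\nabla u|^2-4\mu u^2)g+\tfrac32\nabla u\otimes\nabla u+u\nabla^2u$. For $H^2$ the computation is shorter: differentiating $\partial_tH^2=(-2u_t+2\Tilde{u}_t)H_t^2$ once more and substituting $-2u_0'+2\Tilde{u}_0'=2u^2-8u^2=-6u^2$ and $(-2u_0+2\Tilde{u}_0)^2=(2u)^2$ gives $\partial_t^2H^2|_0=(-6u^2)(4\mu g)+(4u^2)(4\mu g)=-8\mu u^2g$.

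For $\nabla^2 f$ I would write $\partial_t^2(\nabla_i\nabla_jf_t)=(\partial_t^2\nabla_i\nabla_j)f_t+2(\partial_t\nabla_i\nabla_j)f_t'+\nabla_i\nabla_jf_t''$: the first term vanishes at $t=0$ because $f_0$ is constant, the middle term equals $2\cdot(-\tfrac14)(2\nabla_iu\nabla_ju-|\nabla u|^2g_{ij})=-\nabla_iu\nabla_ju+\tfrac12|\nabla u|^2g_{ij}$ using $f_0'=u/2$, and the last is $\nabla^2f''$. For $i_{\nabla f}H$, the clean observation is that along the family $i_{\nabla^{g_t}f_t}H_t=\tfrac{1+2tu}{1+tu}\,i_{\nabla^{g}f_t}H$; since $f_0$ is constant, only the product–rule term $2\,\partial_t\!\big(\tfrac{1+2tu}{1+tu}\big)\big|_0\,i_{\nabla^{g}f_0'}H+i_{\nabla f_0''}H=2u\cdot\tfrac12\,i_{\nabla u}H+i_{\nabla f''}H$ survives, which equals $u\,i_{\nabla u}H+i_{\nabla f''}H$. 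The one term needing a different route is $d^*H$: in dimension $3$, $\nabla H=0$ forces $H=c\,dV_g$ with $c$ a constant ($c^2=2\mu$), so $H_t=c(1+2tu)(1+tu)^{-3/2}dV_{g_t}$; using $d^*_{g_t}(\phi\,dV_{g_t})=-i_{\nabla^{g_t}\phi}\,dV_{g_t}$ (which follows from $\nabla^{g_t}dV_{g_t}=0$ and $d^*(\phi\omega)=\phi\, d^*\omega-i_{\nabla\phi}\omega$), together with $\nabla^{g_t}\phi=(1+tu)^{-1}\nabla^g\phi$, $i_V dV_{g_t}=(1+tu)^{3/2}i_V dV_g$ and $i_{\nabla u}dV_g=c^{-1}i_{\nabla u}H$, one reduces $d^*_{g_t}H_t$ to a scalar Taylor expansion, obtaining $d^*_{g_t}H_t=-t(1+tu)^{-2}(\tfrac12-tu)\,i_{\nabla u}H=(-\tfrac12t+2ut^2+O(t^3))\,i_{\nabla u}H$, hence $\partial_t^2(d^*H)|_0=4u\,i_{\nabla u}H$.

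Finally I would assemble $\partial_t^2\Rc^{H,f}|_0$ from the five pieces and collect like terms: the $g$–coefficient is $\tfrac12|\nabla u|^2-4\mu u^2+2\mu u^2+\tfrac12|\nabla u|^2=|\nabla u|^2-2\mu u^2$, the $\nabla u\otimes\nabla u$–coefficient is $\tfrac32-1=\tfrac12$, the $i_{\nabla u}H$–coefficient is $-\tfrac12(4u)-\tfrac12 u=-\tfrac52u$, and the $u\nabla^2u$, $\nabla^2f''$, $i_{\nabla f''}H$ terms carry over, giving exactly $\big((|\nabla u|^2-2\mu u^2)g+\tfrac12\nabla u\otimes\nabla u+u\nabla^2u+\nabla^2f''\big)-\big(\tfrac52u\,i_{\nabla u}H+\tfrac12\,i_{\nabla f''}H\big)$. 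I expect the main obstacle to be purely organizational: keeping straight which gradients, Hessians and Laplacians are taken with respect to $g$ versus $g_t$, correctly handling the second–order terms generated by the Christoffel–symbol variations (these are precisely what make the $\nabla u\otimes\nabla u$ and $|\nabla u|^2$ coefficients nontrivial), and noticing the dimension–$3$ reduction $H=c\,dV_g$ that bypasses the otherwise delicate variation of $d^*$ on $3$–forms.
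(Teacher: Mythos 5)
Your proposal is correct and follows essentially the same route as the paper: differentiate each first--variation identity of \Cref{AL1} once more, evaluate at $t=0$ using $\Rc=\mu g$, $H^2=4\mu g$, $\triangle u=-4\mu u$, the constancy of $f_0$, the derivatives $u_0'=-u^2$, $\Tilde{u}_0'=-4u^2$, and $f_0'=u/2$ from \Cref{AL2}, and then assemble $\Rc^{H,f}$; all of your coefficients agree with the paper's. The only genuine deviation is the $d^*H$ term, which the paper obtains by differentiating its general variation formula $\partial_t(d^*H)=-u_t(d^*H_t)+d^*(\Tilde{u}_tH_t)+\tfrac32 i_{\nabla u_t}H$ once more, whereas you exploit the dimension--$3$ identity $H=c\,dV_g$ to write $d^*_{g_t}H_t$ in closed form as $-t(1+tu)^{-2}(\tfrac12-tu)\,i_{\nabla u}H$ and Taylor--expand; both yield $4u\,i_{\nabla u}H$ (and your expansion also reproduces the first--order value $-\tfrac12 i_{\nabla u}H$, a useful consistency check), so this is a valid, slightly more self--contained alternative for that one piece.
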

\begin{proof}
Following \Cref{AL1}, we compute
\begin{align*}
    \frac{\partial^2 }{\partial t^2}\Big|_{t=0}R_{ij}&=\frac{\partial }{\partial t}\Big|_{t=0}\Big(-\frac{1}{2}\triangle u_t(g_t)_{ij}-\frac{1}{2}\nabla_i\nabla_ju_t\Big)
    \\&=\Big(-\frac{1}{2}\triangle'u_t(g_t)_{ij}-\frac{1}{2}\triangle u_t'(g_t)_{ij}-\frac{1}{2}\triangle u_t(g_t)'_{ij}-\frac{1}{2}(\nabla_i\nabla_j)'u_t--\frac{1}{2}\nabla_i\nabla_ju'_t\Big)\Big|_{t=0}
    \\&=(\frac{u}{2}\triangle u-\frac{1}{4}|\nabla u|^2)g_{ij}+\frac{1}{2}\triangle(u)^2g_{ij}-\frac{1}{2}u\triangle u g_{ij}+\frac{1}{2}\nabla_iu\nabla_ju-\frac{1}{4}|\nabla u|^2g_{ij}+\frac{1}{2}\nabla_i\nabla_j(u)^2
    \\&=u\triangle ug_{ij}+\frac{1}{2}|\nabla u|^2g_{ij}+\frac{3}{2}\nabla_iu\nabla_ju+u\nabla_i\nabla_ju
    \\&=(\frac{1}{2}|\nabla u|^2-4u^2\mu)g_{ij}+\frac{3}{2}\nabla_iu\nabla_ju+u\nabla_i\nabla_ju,
\end{align*}
\begin{align*}
     \frac{\partial^2 }{\partial t^2}\Big|_{t=0}H^2_{ij}&=\frac{\partial }{\partial t}\Big|_{t=0}\Big((-2u_t+2\Tilde{u}_t)(H^2_t)_{ij}\Big)
     \\&=\Big((-2u'_t+2\Tilde{u}'_t)(H^2_t)_{ij}+(-2u_t+2\Tilde{u}_t)(H^2_t)'_{ij}\Big)\Big|_{t=0}
     \\&=-6u^2H^2_{ij}+(-2u_t+2\Tilde{u}_t)^2(H^2_t)_{ij}\Big|_{t=0}
     \\&=-2u^2H^2_{ij}
     \\&=-8u^2\mu g_{ij},
\end{align*}
\begin{align*}
     \frac{\partial^2 }{\partial t^2}\Big|_{t=0}\nabla_i\nabla_jf&=\Big(2(\nabla_i\nabla_j)'f'+\nabla_i\nabla_jf''\Big)\Big|_{t=0}
     \\&=\Big(-\nabla_iu_t\nabla_jf'-\nabla_ju_t\nabla_if'+\langle \nabla u,\nabla f'\rangle g_{ij}+\nabla_i\nabla_jf''\Big)\Big|_{t=0}
     \\&=-\nabla_iu\nabla_ju+\frac{1}{2}|\nabla u|^2 g_{ij}+(\nabla^2f'')\Big|_{t=0},
\end{align*}
\begin{align*}
   \frac{\partial^2 }{\partial t^2}\Big|_{t=0}(d^*H)&=\frac{\partial }{\partial t}\Big|_{t=0}\Big(-u_t(d^*H_t)+d^*(\Tilde{u_t}H_t)+\frac{3}{2}(i_{\nabla u_t}H)\Big)
   \\&=\Big( -u_t'(d^*H_t)-u_t(d^*H_t)'+ d^*(\Tilde{u_t}H_t)'+\frac{3}{2}(i_{\nabla u_t}H)'\Big)\Big|_{t=0}
   \\&=\Big(-u_t(-u_t(d^*H_t)+d^*(\Tilde{u_t}H_t)+\frac{3}{2}(i_{\nabla u_t}H))+(-u_t d^*(\Tilde{u}_tH_t)+d^*(\Tilde{u_t}H_t)'+\frac{3}{2}\Tilde{u}_t(i_{\nabla u_t}H))
   \\&\kern2em+\frac{3}{2}(-u_t+\Tilde{u}_t)(i_{\nabla {u_t}}H)+\frac{3}{2}(i_{\nabla {u'_t}}H)\Big)\Big|_{t=0}
   \\&=-4ud^*(uH)
   \\&=4u(i_{\nabla u}H),
\end{align*}
\begin{align*}
     \frac{\partial^2 }{\partial t^2}\Big|_{t=0}(i_{\nabla f}H)&=\frac{\partial }{\partial t}\Big|_{t=0}\Big((-u_t+\Tilde{u}_t)(i_{\nabla {f_t}}H)+(i_{\nabla {f'_t}}H)\Big)
     \\&=\Big((-u'_t+\Tilde{u}'_t)(i_{\nabla {f_t}}H)+(-u_t+\Tilde{u}_t)(i_{\nabla {f_t}}H)'+(i_{\nabla {f'_t}}H)' \Big)\Big|_{t=0}
     \\&=\Big((-u_t+\Tilde{u}_t)^2(i_{\nabla {f_t}}H)+2(-u_t+\Tilde{u}_t)(i_{\nabla {f'_t}}H)+(i_{\nabla {f''_t}}H)\Big)\Big|_{t=0}
     \\&=u(i_{\nabla u}H)+(i_{\nabla {f''_t}}H)\Big|_{t=0}.
\end{align*}
Then, $\frac{\partial^2 }{\partial t^2}\Big|_{t=0}\Rc^{H,f}_t$ is derived from the definition.
\end{proof}

\bibliographystyle{plain}
\bibliography{Reference}
\nocite{*}

\end{document}